\documentclass[12pt]{article}

\usepackage[a4paper,includeheadfoot,margin=2.54cm]{geometry}
\usepackage[english]{babel}
\usepackage{amsthm}
\usepackage{amsmath}
\usepackage{amssymb}
\usepackage[hyphens]{url}

\usepackage{multirow,graphicx} 
\usepackage[all]{xy}
\usepackage{xcolor}

\theoremstyle{plain}
\newtheorem{theorem}{Theorem}
\newtheorem{lemma}{Lemma}
\newtheorem{proposition}[lemma]{Proposition}
\newtheorem{corollary}[lemma]{Corollary}

\newtheorem{example}{Example}[section]
\theoremstyle{definition}

\newtheorem{remark}{Remark}

\newcommand{\F}{\mathbb{F}}

\newcommand{\V}{\mathbb{V}}

\renewcommand{\to}{\rightarrow}

\newcommand{\Tr}{{\rm Tr}}

\begin{document}

\title{On Generalizations of Maiorana--McFarland and $\mathcal{PS}_{ap}$ Functions}

\author{ 
	Sezel Alkan$^{1}$, Nurdag\"{u}l Anbar$^{1}$, Athina Avrantini$^{2}$,\\Erroxe Etxabarri-Alberdi$^{3}$, Tekg\"{u}l Kalayc\i$^{4}$, Beatrice Toesca$^{5}$
	\vspace{0.4cm} \\
	\small $^1$Sabanc{\i} University, MDBF, Orhanl\i, Tuzla, 34956 \. Istanbul, Turkey\\
	\small $^2$Department of Mathematics, University of Pennsylvania, Philadelphia, PA 19104-6395, USA
	\\
	\small $^3$Basic Sciences Department,  Mondragon Unibertsitatea, 20500 Arrasate, Gipuzkoa, Spain
	\\
	\small $^4$Institut f\"ur Mathematik, Alpen-Adria-Universit\"at Klagenfurt, Austria \\
	\small $^5$Institute of Mathematics, University of Zurich, Switzerland
	\\
	\small Email: {\tt sezel.alkan@sabanciuniv.edu } \\
	\small Email: {\tt nurdagulanbar2@gmail.com}\\
	\small Email: {\tt athina@sas.upenn.edu }\\
	\small Email: {\tt eetxabarri@mondragon.edu  } \\
	\small Email: {\tt tekgulkalayci1@gmail.com}\\
	\small Email: {\tt beatrice.toesca@math.uzh.ch  }
}
\date{}

	\maketitle
	
	\begin{abstract}
		We study generalizations of two classical primary constructions of Boolean bent functions, namely the Maiorana--McFarland ($MM$) class and the (Desarguesian) partial spread ($\mathcal{PS}_{ap}$) class. 
		
		The construction of bent functions lying outside the completed $MM$ class has attracted considerable attention in recent years. 
		In this direction, we construct families of generalized Maiorana--McFarland bent functions that are not equivalent to any function in the classical $MM$ or $\mathcal{PS}_{ap}$ classes, and hence lie outside their completed classes.
		
		As a second contribution, we investigate the decomposition of generalized $\mathcal{PS}_{ap}$ functions. 
		We prove that when the degree is sufficiently small relative to the size of the underlying finite field, such functions do not, in general, admit a decomposition into bent or semibent functions. 
		Consequently, they cannot be obtained from known secondary constructions based on concatenation.
		
		Finally, we present a secondary construction of Boolean bent functions arising from the concatenation of components of vectorial generalized $\mathcal{PS}_{ap}$ functions.
		
		Our constructions and proofs rely on classical results concerning second-order derivatives of bent functions and their duals. In addition, we employ methods from the theory of algebraic curves and their function fields.
	\end{abstract}
	
	\noindent\textbf{Keywords:} Boolean functions, Bent functions, Concatenation, Decomposition, Maiorana--McFarland functions, Partial spread functions, Algebraic curves, Function fields.
	
	\medskip
	\noindent\textbf{Mathematics Subject Classification (2010):} 11T06, 94A60, 14H05.
	
	\section{Introduction}
	
	Let \(p\) be a prime and let \(\V_n^{(p)}\) be an \(n\)-dimensional vector space
	over the prime field \(\mathbb{F}_p\).
	We fix a non-degenerate inner product
	\(\langle \cdot , \cdot \rangle_n\) on \(\V_n^{(p)}\).
	
	If \(\V_n^{(p)} = \mathbb{F}_p^n\) is the vector space of \(n\)-tuples over
	\(\mathbb{F}_p\), we take the usual dot product \(\langle b, x \rangle_n = b \cdot x\).
	If \(\V_n^{(p)} = \mathbb{F}_{p^n}\) is the finite field of order $p^n$, we define $\langle b, x \rangle_n = \Tr_1^n(bx)$,
	where, for any divisor \(k\) of \(n\), \(\Tr_k^n\) denotes the trace mapping from
	\(\mathbb{F}_{p^n}\) to \(\mathbb{F}_{p^k}\).
	When \(n = 2m\), it is often convenient to identify
	\(\V_n^{(p)} =\mathbb{F}_{p^m} \times \mathbb{F}_{p^m}\), in which case the
	inner product is given by $\langle (u,v), (x,y) \rangle_n = \Tr_1^m(ux + vy)$.
	
	A function \(f : \V_n^{(p)} \to \mathbb{F}_p\) is called a \emph{\(p\)-ary
		function}; in the special case \(p=2\), it is called a \emph{Boolean function}.
	The Walsh transform of \(f\) is the complex-valued function defined by
	\[
	W_f(b)
	=
	\sum_{x \in \V_n^{(p)}}
	\zeta_p^{\,f(x) - \langle b, x \rangle_n},
	\qquad b \in \V_n^{(p)},
	\]
	where \(\zeta_p = e^{2\pi i/p}\) and \(i\) is a primitive fourth root of unity.
	The \emph{extended Walsh spectrum} of \(f\) is the multiset
	\(\{\, |W_f(b)| : b \in \V_n^{(p)} \,\}\).
	
	The function \(f : \V_n^{(p)} \to \mathbb{F}_p\) is called \emph{bent} if $|W_f(b)| = p^{n/2}$ for all $b \in \V_n^{(p)}$.
	Equivalently, \(f\) is bent if and only if, for every nonzero vector
	\(a \in \V_n^{(p)}\), its first-order derivative $D_a f(x) = f(x+a) - f(x)$
	is balanced, i.e., for each \(c \in \mathbb{F}_p\), the equation
	\(D_a f(x) = c\) has exactly \(p^{n-1}\) solutions in \(\V_n^{(p)}\).
	
	In the Boolean case \(p=2\), the Walsh transform of a bent function \(f\)
	satisfies $W_f(b) = 2^{n/2} (-1)^{f^{\ast}(b)}$,
	where \(f^{\ast} : \V_n^{(2)} \to \mathbb{F}_2\) is a Boolean function, called the
	\emph{dual} of \(f\), which is itself bent.
	Since \(W_f\) is integer-valued in this case, Boolean bent functions exist only
	when \(n\) is even.
	
	In contrast, for odd $p$, bent functions 
	$f : \V_n^{(p)} \to \mathbb{F}_p$ exist for both even and odd values of $n$.
	Their Walsh transform satisfies
	\[
	W_f(b) =
	\begin{cases}
	\pm \zeta_p^{\,f^{\ast}(b)} \, p^{n/2},
	& \text{if \(n\) is even, or if \(n\) is odd and \(p \equiv 1 \mod{4}\)}, \\[1ex]
	\pm i\, \zeta_p^{\,f^{\ast}(b)} \, p^{n/2},
	& \text{if \(n\) is odd and \(p \equiv 3 \mod{4}\)},
	\end{cases}
	\]
	where 
	\(f^{\ast} : \V_n^{(p)} \to \mathbb{F}_p\) is again called the \emph{dual} of
	\(f\); see \cite{KSW85}.
	
	A function \(f : \V_n^{(p)} \to \mathbb{F}_p\) is called
	\emph{\(s\)-plateaued} if there exists an integer \(0 \le s \le n\) such that $\lvert W_f(b) \rvert \in \{\, 0,\, p^{(n+s)/2} \,\}$ for all $ b \in \V_n^{(p)}$.
	In particular, \(f\) is bent if and only if \(s = 0\).
	Moreover, in the Boolean case \(p = 2\), since the Walsh transform \(W_f\) takes
	integer values, the integers \(n\) and \(s\) must have the same parity.
	A Boolean function is called \emph{semibent} if \(s = 1\) when \(n\) is odd, or
	if \(s = 2\) when \(n\) is even.
	
	Let \(F : \V_n^{(p)} \to \V_k^{(p)}\) be a function.
	For each \(\alpha \in \V_k^{(p)} \setminus \{0\}\), the associated \emph{component
		function} of \(F\) is defined by $F_\alpha (x) = \langle \alpha , F(x) \rangle_k$.
	The function \(F\) is called \emph{(vectorial) bent} if all its
	component functions \(F_\alpha \), \(\alpha \in \V_k^{(p)} \setminus \{0\}\), are 
	bent functions.
	In this case, the set of component functions of \(F\), together with the zero
	function, forms a \(k\)-dimensional vector space over \(\mathbb{F}_p\)
	of bent functions.
	
	In terms of the Walsh transform, a function
	\(F : \V_n^{(p)} \to \V_k^{(p)}\) is bent if, for every
	\(\alpha \in \V_k^{(p)} \setminus \{0\}\) and every \(b \in \V_n^{(p)}\),
	\[
	W_{F_\alpha}(b)
	=
	\sum_{x \in \V_n^{(p)}}
	\zeta_p^{\,\langle a, F(x) \rangle_k - \langle b, x \rangle_n},
	\qquad
	\zeta_p = e^{2\pi i/p},
	\]
	has absolute value \(p^{n/2}\). Equivalently, a function \(F : \V_n^{(p)} \to \V_k^{(p)}\) is bent if,
	for every nonzero \(a \in \V_n^{(p)}\), the derivative $D_a F(x) = F(x + a) - F(x)$
	is a balanced function from \(\V_n^{(p)}\) to \(\V_k^{(p)}\).
	
	Extended-affine equivalence is a fundamental notion in the theory of
	(vectorial) functions, as it characterizes transformations that
	preserve important cryptographic properties, including the extended Walsh
	spectrum and, in particular, bentness.
	
	Two functions
	\(F, G : \V_n^{(p)} \to \V_k^{(p)}\) are said to be
	\emph{extended-affine equivalent (EA-equivalent) } if
	\[
	G(x) = L_1\!\bigl(F(L_2(x) + a)\bigr) + L_3(x) + b,
	\]
	where \(L_1 : \V_k^{(p)} \to \V_k^{(p)}\) and
	\(L_2 : \V_n^{(p)} \to \V_n^{(p)}\) are linear permutations,
	\(L_3 : \V_n^{(p)} \to \V_k^{(p)}\) is a linear map, and
	\(a \in \V_n^{(p)}\), \(b \in \V_k^{(p)}\).
	In the special case \(k = 1\), EA-equivalence reduces to the following relation
	between functions \(f, g : \V_n^{(p)} \to \mathbb{F}_p\):
	\[
	g(x) = f(L(x) + a) + \langle c, x \rangle_n + b,
	\]
	where \(L\) is a linear permutation on \(\V_n^{(p)}\),
	\(c \in \V_n^{(p)}\), and \(b\in \mathbb{F}_p\).
	
	A class \(\mathcal{C}\) of \(p\)-ary, respectively Boolean, bent functions is
	called \emph{completed} if it is invariant under EA-. In other words, \(\mathcal{C}\) contains a function \(f\) if and only if
	it contains every function EA-equivalent to \(f\).
	The \emph{completion} of \(\mathcal{C}\), denoted by \(\mathcal{C}^{\#}\), is
	the smallest completed class containing \(\mathcal{C}\).
	
	\smallskip
	
	Bent functions can be obtained via two main types of constructions:
	\emph{primary} and \emph{secondary}.
	Primary constructions are direct algebraic methods that produce bent functions
	from scratch, without relying on previously known examples.
	In contrast, secondary constructions generate new bent functions from one or
	more \emph{known} bent (or related) functions by applying transformations or
	combinations that preserve bentness. While many secondary constructions of bent functions are known (see
	\cite[Section~6.1.16]{Carlet2021}), there are only two classical primary
	constructions, namely the Maiorana--McFarland construction~\cite{mm} and the
	partial spread (Dillon) construction~\cite{dillon}.
	
	It is well known that all bent functions in dimensions up to six belong to the
	completed Maiorana--McFarland class. The first examples of bent
	functions outside the \(MM^{\#}\) class were constructed by Dillon~\cite{dillon},
	who gave explicit examples in eight variables. It was later shown in~\cite{ll},
	using computer search, that the \(MM^{\#}\) class constitutes only a very small
	fraction of all bent functions. More precisely, the number of bent functions in
	the \(MM^{\#}\) class is at most \(2^{72}\), whereas there are approximately
	\(2^{106}\) bent functions in dimension eight. 
	This observation initiated an active line of research focused on the
	construction of bent functions lying outside the \(MM^{\#}\) class; see, for instance, the recent papers
	\cite{Letal2025,PasalicEtAl2024,PasalicEtAl2023,pp}, as well as the survey paper
	\cite{Pasalic26} and the references therein.
	
	While exclusion from the completed Desarguesian partial spread
	\(\mathcal{PS}_{ap}^{\#}\) class—a distinguished subclass of the partial spread
	construction—can be detected using invariants such as the algebraic degree or
	the \(2\)-rank (see~\cite{akm23,wfq}), membership in the \(MM^{\#}\) class is
	characterized by a criterion due to Dillon~\cite{dillon}, formulated in terms
	of second-order derivatives.
	
	In this paper, although we present certain results in the more general setting
	of an arbitrary prime \(p\), our primary focus is on the construction and
	classification of (vectorial) Boolean functions.
	
	The paper is organized as follows. In Section~\ref{sec:MP}, we briefly review two classical constructions: 
	Maiorana--McFarland and Desarguesian partial spread bent functions, 
	and their generalizations for arbitrary characteristic $p$.
	In Section~\ref{sec:MM}, we present a construction of Boolean generalized Maiorana--McFarland functions that do not belong to the \(MM^{\#}\) class. In Section~\ref{sec:decom}, we study the decomposition of Boolean generalized $\mathcal{PS}_{ap}$ functions and show that, in general, they do not arise from secondary constructions of bent or semibent functions via concatenation when their degree is sufficiently small relative to the size of the finite field on which they are defined. Finally, in Section~\ref{concat}, we provide a secondary construction of Boolean bent functions motivated by the concatenation of the components of vectorial $\mathcal{PS}_{ap}$ functions.

	\section{Generalized Maiorana--McFarland and generalized $\mathcal{PS}_{ap}$ functions} \label{sec:MP}
	
	In this section, we briefly recall two fundamental primary constructions of
	\(p\)-ary (vectorial) bent functions: the
	Maiorana--McFarland class, introduced independently by Maiorana (unpublished)
	and McFarland~\cite{mm}, and the Desarguesian partial spread class
	\(\mathcal{PS}_{ap}\), introduced by Dillon in his Ph.D.\ thesis~\cite{dillon},
	together with their generalizations.
	
	Throughout, we use the bivariate representation $\V_n^{(p)}= \mathbb{F}_{p^m} \times \mathbb{F}_{p^m}$, $n = 2m$,
	and present the corresponding definitions in this setting.
	
	\paragraph{Maiorana--McFarland (\(MM\)) class.}
	A function $f : \mathbb{F}_{p^m} \times \mathbb{F}_{p^m} \to \mathbb{F}_p$
	is said to belong to the Maiorana--McFarland class if it is of the form
	\[
	f(x,y) = \Tr_1^m\!\bigl(x\,\pi(y)\bigr) + g(y),
	\]
	where \(\pi\) is a permutation of \(\mathbb{F}_{p^m}\) and
	\(g : \mathbb{F}_{p^m} \to \mathbb{F}_p\) is an arbitrary function.
	
	In the vectorial case, a function $F : \mathbb{F}_{p^m} \times \mathbb{F}_{p^m} \to \mathbb{F}_{p^m}$
	is called Maiorana--McFarland if it can be written as
	\[
	F(x,y) = x\,\pi(y) + G(y),
	\]
	where \(G : \mathbb{F}_{p^m} \to \mathbb{F}_{p^m}\) is an arbitrary function.
	
	It is well known that \(f\), and respectively \(F\), is bent if and only if
	\(\pi\) is a permutation of $\mathbb{F}_{p^m}$.
	
	The following characterization of Boolean bent functions, stated in a general form for the completed \(MM\) class, was given by Dillon.
	
	\begin{lemma}{\cite[Dillon's criterion]{dillon}}\label{lem:second-order}
		Let \(n = 2m\). A Boolean bent function $f : \V_n^{(2)} \to \mathbb{F}_2$
		belongs to the \(MM^{\#}\) class if and only if there exists an \(m\)-dimensional vector subspace
		\(\mathcal U \subseteq \V_n^{(2)}\) such that, for all \(a,b \in \mathcal U \), the second-order derivatives
		\begin{equation}\label{eq:second-order-derivative}
			D_a D_b f(x)
			= f(x) + f(x+a) + f(x+b) + f(x+a+b)
		\end{equation}
		vanish identically.
	\end{lemma}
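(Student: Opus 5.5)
We prove both directions of Dillon's criterion. The approach is to translate the vanishing of second-order derivatives along $\mathcal U$ into the statement that $f$ is affine on every coset of $\mathcal U$, and then read off the Maiorana--McFarland form in suitable coordinates.

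\emph{Necessity.} Suppose $f\in MM^{\#}$, so $f(x)=h(L(x)+c)+\langle d,x\rangle_n+e$, where $L$ is a linear permutation and $h(x,y)=\Tr_1^m(x\pi(y))+g(y)$. Take $\mathcal U_0=\mathbb{F}_{2^m}\times\{0\}$. For $a=(a_1,0)$ and $b=(b_1,0)$ in $\mathcal U_0$ we have $D_aD_b h(x,y)=\Tr_1^m\bigl((a_1+b_1+a_1+b_1)\pi(y)\bigr)=0$, since $h$ is affine in $x$ for each fixed $y$. Second-order derivatives are unchanged by adding affine functions and by translation. Under a linear change of variables we have $D_aD_b(h\circ L)=(D_{La}D_{Lb}h)\circ L$. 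Hence $\mathcal U=L^{-1}(\mathcal U_0)$ works.

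\emph{Sufficiency.} Suppose $D_aD_bf\equiv 0$ for all $a,b\in\mathcal U$.

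1. Choose a complement $\mathcal W$ with $\V_n^{(2)}=\mathcal U\oplus\mathcal W$. Via a linear permutation, identify $\mathcal U$ with $\mathbb{F}_{2^m}\times\{0\}$ and $\mathcal W$ with $\{0\}\times\mathbb{F}_{2^m}$. Now write $f(x,y)$ with $x\in\mathbb{F}_{2^m}$ running along $\mathcal U$ and $y\in\mathbb{F}_{2^m}$ running along $\mathcal W$.

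2. For fixed $y$, the map $a\mapsto D_af(x,y)$ restricted to $a\in\mathcal U$ does not depend on $x$, because $D_bD_af=0$. Together with $D_{a+b}f=D_af+D_bf(\cdot+a)=D_af+D_bf$, this shows that $a\mapsto D_af(0,y)$ is linear on $\mathcal U$. So $x\mapsto f(x,y)+f(0,y)$ is a linear form on $\mathbb{F}_{2^m}$. By nondegeneracy of the trace form, it equals $\Tr_1^m(x\pi(y))$ for a unique $\pi(y)$. Setting $g(y)=f(0,y)$ gives $f(x,y)=\Tr_1^m(x\pi(y))+g(y)$.

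3. It remains to show that $\pi$ is a permutation. This is the step needing care, because here bentness must be used. Compute
\[
W_f(u,v)=\sum_y(-1)^{g(y)+\Tr_1^m(vy)}\sum_x(-1)^{\Tr_1^m(x(\pi(y)+u))}=2^m\sum_{y\in\pi^{-1}(u)}(-1)^{g(y)+\Tr_1^m(vy)}.
\]
If some $u$ has $\pi^{-1}(u)=\emptyset$, this gives $W_f(u,v)=0$, contradicting $|W_f|=2^m$. Hence $\pi$ is surjective, and therefore bijective on the finite set $\mathbb{F}_{2^m}$.

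4. So $f$ lies in $MM$ after the linear coordinate change, and $f\in MM^{\#}$.

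The main obstacle is step 2: showing that the vanishing second derivatives give \emph{linearity} of $x\mapsto f(x,y)+f(0,y)$, and not merely affineness in each direction separately. This follows from the additivity identity above, applied for each fixed $y$ separately. The remaining steps are routine.
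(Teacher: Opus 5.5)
Your proof is correct and complete. The paper gives no proof of this lemma of its own: it is quoted from Dillon's thesis. So there is no in-paper argument to compare against, and what you give is the standard proof of Dillon's criterion. The chain is:
\begin{itemize}
\item vanishing of $D_aD_bf$ for $a,b\in\mathcal U$ makes $D_af$ constant on cosets of $\mathcal U$ and additive in $a$;
\item hence $f$ is affine on each coset of $\mathcal U$;
\item after a linear change of coordinates this is exactly the bivariate form $\Tr_1^m(x\pi(y))+g(y)$;
\item bentness, via $W_f(u,v)=2^m\sum_{y\in\pi^{-1}(u)}(-1)^{g(y)+\Tr_1^m(vy)}$, forces $\pi$ to be surjective and hence a permutation.
\end{itemize}

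The transport identity $D_aD_b(h\circ L)=(D_{La}D_{Lb}h)\circ L$ that you use for necessity is precisely the content of the paper's Lemma~\ref{lem:second} and Remark~\ref{rem:fcircL}. You also rely on it implicitly in Step~1 of sufficiency, when you move $\mathcal U$ to $\mathbb{F}_{2^m}\times\{0\}$. There you additionally need that composing with a linear bijection preserves bentness, and you should state both facts explicitly.

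Your argument also shows slightly more than the statement, since bentness enters only in Step~3. Any Boolean function with an $m$-dimensional $\mathcal M$-subspace is, up to a linear change of coordinates, of the form $\Tr_1^m(x\pi(y))+g(y)$ with $\pi$ an arbitrary map. Bentness is then equivalent to $\pi$ being a permutation.
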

	A subspace \(\mathcal{U} \subseteq \V_n^{(2)}\) for which
	\(D_a D_b f(x) = 0\) for all \(a,b \in \mathcal{U}\) and all
	\(x \in \V_n^{(2)}\) is called an \emph{\(\mathcal{M}\)-subspace} of \(f\)
	(see~\cite{pp}). We note that both the number of \(\mathcal{M}\)-subspaces of
	\(f\) and its maximal dimension, called the \emph{linearity index}
	\(\operatorname{ind}(f)\) of \(f\), are invariant under EA-equivalence.
	For a bent function \(f \colon \V_n^{(2)} \to \mathbb{F}_2\), the linearity
	index satisfies \(1 \le \operatorname{ind}(f) \le n/2\);
	see~\cite[Proposition~5.1]{Pasalic26}. Consequently, a bent function \(f\) on
	\(\V_n^{(2)}\) belongs to the \(MM^{\#}\) class if and only if
	\(\operatorname{ind}(f) = n/2\).
	
	\paragraph{Generalized Maiorana--McFarland (\(GMM\)) class.}
	Let \(m\) and \(k\) be integers with \(1 \le k \le m\).
	For each \(z \in \V_k^{(p)}\), let $f_z : \V_m^{(p)} \to \mathbb{F}_p$
	be a \(k\)-plateaued \(p\)-ary function, and define its Walsh support by
	\[
	\operatorname{supp}(W_{f_z})
	= \{\, b \in \V_m^{(p)} : W_{f_z}(b) \neq 0 \,\}.
	\]
	Assume that the Walsh supports are pairwise disjoint, that is,
	\[
	\operatorname{supp}(W_{f_z}) \cap
	\operatorname{supp}(W_{f_y}) = \emptyset
	\quad
	\text{for all } z,y \in \V_k^{(p)},\; z \neq y.
	\]
	Then the function
	\begin{equation}\label{eq:GMM}
		f : \V_m^{(p)} \times \V_k^{(p)} \to \mathbb{F}_p,
		\qquad
		f(x,z) = f_z(x),
	\end{equation}
	is a \(p\)-ary bent function (see~\cite{cmpGMM} for details).
	
	Observe that, for fixed \(y\), the Maiorana--McFarland function
	\(f(x,y) = \Tr_1^m\!\bigl(x\,\pi(y)\bigr) + g(y)\)
	is affine in \(x\), and hence corresponds to an \(m\)-plateaued \(p\)-ary function that is affine on each coset of \(\mathbb{F}_{p^n} \times \{0\}\).
	Note also that when \(k = m\), the construction~\eqref{eq:GMM} reduces to the
	Maiorana--McFarland class.
	For this reason, functions defined by~\eqref{eq:GMM} are called
	\emph{generalized Maiorana--McFarland functions} if they are affine on the cosets of a \(k\)-dimensional subspace.
	
	A commonly used finite field representation of the \(GMM\) construction is the
	following.
	Consider the function $f : \mathbb{F}_{p^n} \times \mathbb{F}_{p^k} \times \mathbb{F}_{p^k}
	\to \mathbb{F}_p$
	defined by
	\begin{align}\label{eq:genMM}
		f(x,y,z) = f^{(z)}(x) + \Tr_1^k(yz),  
	\end{align}
	where, for each \(z \in \mathbb{F}_{p^k}\), the function
	\(f^{(z)} : \mathbb{F}_{p^n} \to \mathbb{F}_p\) is bent.
	For fixed \(z\), the function \(f(\cdot,\cdot,z)\) is a \(k\)-plateaued
	\(p\)-ary function on
	\(\mathbb{F}_{p^n} \times \mathbb{F}_{p^k}\), and \(f\) is affine on the cosets of \(\{0\} \times \{0\} \times \mathbb{F}_{p^k}\).
	Note that the corresponding Walsh
	supports of \(f^{(z)}\) are pairwise disjoint, in accordance with the \(GMM\) construction.
	
	In~\cite[Theorem~3]{aakm25}, the Walsh transform of \(f\) in~\eqref{eq:genMM} and the explicit form of its dual \(f^{\ast}\) are derived. In particular, the dual function \(f^{\ast}\) is again a \(GMM\) function, given by
	\[
	f^{\ast}(x,y,z)
	=
	\bigl(f^{(y)}\bigr)^{\ast}(x)
	- \Tr_1^k(yz),
	\]
	and satisfies \(f^{\ast\ast} = (f^{\ast})^{\ast} = f\).
	
	\begin{remark}
		A related notion of the generalized Maiorana--McFarland class was introduced
		in~\cite[Section~4.3]{Camion92} as follows. Let
		\(0 \le k \le \tfrac{n}{2}-1\), and set \(r = \tfrac{n}{2} - k\) and
		\(s = \tfrac{n}{2} + k\). A Boolean function $f_{\varphi,h} \colon \V_r^{(2)} \times \V_s^{(2)} \to \mathbb{F}_2$
		of the form
		\begin{equation}\label{eq:MM}
			f_{\varphi,h}(x,y) = \langle x , \pi(y) \rangle_r + h(y),
			\qquad x \in \V_r^{(2)},\; y \in \V_s^{(2)},
		\end{equation}
		is said to be a generalized Maiorana--McFarland function, where
		\(\pi \colon \V_s^{(2)} \to \V_r^{(2)}\) and
		\(h \colon \V_s^{(2)} \to \mathbb{F}_2\) are arbitrary functions;
		see also~\cite{Carlet2004}. Observe that, for fixed $x \in \mathbb{F}_{p^n}$ and $z \in \mathbb{F}_{p^k}$, 
		the function defined in \eqref{eq:genMM} is affine in the variable $y$. 
		Consequently, $f$ is equivalent to a function on $\V_{k}^{(2)} \times\V_{n+k}^{(2)}$
		of the form $f(x,y) = \langle x , \pi(y) \rangle_k + h(y)$,
		where $\pi \colon \V_{n+k}^{(2)} \to \V_k^{(2)}$ and 
		$h \colon \V_{n+k}^{(2)} \to \mathbb{F}_2$.
		
		Observe also that, when $k = 0$, the above construction reduces to the classical Maiorana--McFarland class of bent functions, provided that $\pi$ is a permutation, whereas for
		\(k = \tfrac{n}{2}-1\) the completed class defined by~\eqref{eq:MM} contains all Boolean
		functions on \(\V_n^{(2)}\).
		There is an extensive research devoted to the classification of functions
		of the form~\eqref{eq:MM}, in particular to the characterization of those
		functions that are bent; see the survey paper~\cite{Pasalic26} and the
		references therein.
	\end{remark}
	
	\paragraph{Desarguesian partial spread (\(\mathcal{PS}_{ap}\)) class.}
	Let \(n = 2m\).
	A \emph{(complete) spread} of \(\V_n^{(p)}\) is a collection of
	\(m\)-dimensional \(\mathbb{F}_p\)-subspaces
	\(U_0, U_1, \dots, U_{p^m} \subseteq \V_n^{(p)}\)
	such that $U_i \cap U_j = \{0\}$ for all $0 \le i < j \le p^m$.
	Equivalently, every nonzero element of \(\V_n^{(p)}\) belongs to exactly one of
	the sets \(U_j^{\ast} = U_j \setminus \{0\}\).
	
	It is well known that every function
	\( f : \V_n^{(p)} \to \mathbb{F}_p \)
	constructed as follows is a bent function. For every \( c \in \mathbb{F}_p \),
	the function \( f \) maps the elements of exactly \( p^{m-1} \) of the sets
	\( U_j^{\ast} \), \( 1 \le j \le p^{m} \), to the value \( c \),
	and \( f \) is constant on the subspace \( U_0 \).
	
	Similarly, one obtains vectorial partial spread bent functions
	\( F : \V_n^{(p)} \to \V_m^{(p)} \).
	For every \( c \in \V_m^{(p)} \), the function \( F \) maps the elements of
	exactly one of the sets \( U_j^{\ast} \), \( 1 \le j \le p^{m} \), to the value
	\( c \), and \( F \) is constant on the subspace \( U_0 \).
	
	A well-known example of a spread is the \emph{Desarguesian spread}.
	In the bivariate representation
	\(\V_n^{(p)} \cong \mathbb{F}_{p^m} \times \mathbb{F}_{p^m}\),
	it is given by
	\[
	U = \{\, (0,y) : y \in \mathbb{F}_{p^m} \},
	\qquad
	U_s = \{\, (x,sx) : x \in \mathbb{F}_{p^m} \,\},
	\quad s \in \mathbb{F}_{p^m}.
	\]
	Bent functions arising from the Desarguesian spread admit the explicit
	representation
	\begin{equation}\label{eq:PSap}
		f(x,y) = P\!\left(y\,x^{p^{m}-2}\right),
	\end{equation}
	where
	\(P \colon \mathbb{F}_{p^m} \to \mathbb{F}_p\)
	is a balanced function.
	
	Analogously, vectorial bent functions
	\(F \colon \mathbb{F}_{p^{m}} \times \mathbb{F}_{p^{m}} \to \mathbb{F}_{p^{m}}\)
	arising from the Desarguesian spread are represented as
	\[
	F(x,y) = P\bigl(y\,x^{p^{m}-2}\bigr),
	\]
	where
	\(P \colon \mathbb{F}_{p^{m}} \to \mathbb{F}_{p^{m}}\)
	is a permutation.
	
	Functions obtained from the Desarguesian partial spread construction are called 
	\(\mathcal{PS}_{ap}\) functions.
	It has been shown that every (vectorial) bent function arising from a spread of 
	\(\V_n^{(p)}\) has algebraic degree \((p-1)n/2\); 
	see~\cite{dillon} for the case \(p = 2\) and~\cite{nuwi22} for odd primes \(p\).
	
	\paragraph{Generalized \(\mathcal{PS}_{ap}\) class.}
	The construction of bent functions from spreads can be generalized via the
	notion of \emph{(normal) bent partitions} \cite{nuwi22}, defined as follows:\\
	A partition of \(\V_n^{(p)}\) into an \((n/2)\)-dimensional subspace \(U\) and
	sets \(A_1, A_2, \dots, A_K\) is called a
	\emph{normal bent partition of depth \(K\)} if every function
	\(f :\V_n^{(p)} \to \mathbb{F}_p\) satisfying the following conditions is bent:
	\begin{itemize}
		\item[(i)] For each \(c \in \mathbb{F}_p\), exactly \(K/p\) of the sets
		\(A_j\) are contained in the preimage \(f^{-1}(c)=\{x\in \V_n^{(p)} : f(x)=c\}\).
		\item[(ii)] The function \(f\) is constant on the subspace \(U\).
	\end{itemize}
	In particular, if \(U_0, U_1, \dots, U_{p^m}\) is a spread of \(\V_n^{(p)}\),
	then the sets \(U_0, U_1^{\ast}, \dots, U_{p^m}^{\ast}\), where
	\(U_i^{\ast} = U_i \setminus \{0\}\), form a normal bent partition of depth
	\(p^m\).
	
	The first class of bent partitions that are not equivalent to those arising from spreads 
	was introduced in~\cite{nuwi22} for arbitrary prime $p$, and in~\cite{mp} for $p=2$. Since then, several other primary 
	and secondary constructions of bent partitions have been developed; see, 
	for instance, \cite{aakm25,Anbar2025Preprint,akm22a,wfw}, as well as the survey paper 
	\cite{akm25} and the references therein. 
	
	Among these, the construction presented in~\cite{nuwi22} (and  \cite{mp}) admits an explicit 
	representation and can be described as follows.
	
	Let \(m\), \(k\), and \(e\) be integers such that \(k \mid m\),
	\(e \equiv p^{\ell} \mod{(p^{k}-1)}\), and
	\(\gcd(p^{m}-1, e) = 1\).
	Let \(\eta\) denote the multiplicative inverse of \(e\) modulo \((p^{m}-1)\), that
	is, $\eta e \equiv 1 \mod{(p^{m}-1)}$.
	For \(s \in \mathbb{F}_{p^{m}}\), define the sets
	\begin{equation}\label{eq:U}
		U = \{\, (0, y) : y \in \mathbb{F}_{p^{m}} \,\}, \qquad
		U_s = \{\, (x, s x^{e}) : x \in \mathbb{F}_{p^{m}} \,\},
		\qquad
		U_s^{\ast} = U_s \setminus \{(0,0)\}.
	\end{equation}
	Similarly, define
	\begin{equation}\label{eq:Bgamma}
		V = \{\, (x, 0) : x \in \mathbb{F}_{p^{m}} \,\}, \qquad
		V_s = \{\, (s x^{\eta}, x) : x \in \mathbb{F}_{p^{m}} \,\},
		\qquad
		V_s^{\ast} = V_s \setminus \{(0,0)\}.
	\end{equation}
	For \(\gamma \in \mathbb{F}_{p^{k}}\), define
	\begin{align}\label{eq:Agamma}
		A(\gamma)
		=
		\bigcup_{\substack{s \in \mathbb{F}_{p^{m}} \\
				\Tr_{k}^{m}(s) = \gamma}}
		U_s^{\ast},
		\qquad
		B(\gamma)
		=
		\bigcup_{\substack{s \in \mathbb{F}_{p^{m}} \\
				\Tr_{k}^{m}(s) = \gamma}}
		V_s^{\ast}.  
	\end{align}
	Then the collections
	\begin{equation}\label{eq:Gamma}
		\Omega_1 = \{\,U ,\, A(\gamma) : \gamma \in \mathbb{F}_{p^{k}} \,\},
		\qquad
		\Omega_2 =  \{\, V ,\, B(\gamma) : \gamma \in \mathbb{F}_{p^{k}} \,\}, 
	\end{equation}
	are bent partitions of \(\mathbb{F}_{p^{m}} \times \mathbb{F}_{p^{m}}\) of depth
	\(p^{k}\). Moreover, similarly to the case of \(\mathcal{PS}_{ap}\) bent
	functions, the bent functions
	\(f, g : \mathbb{F}_{p^{m}} \times \mathbb{F}_{p^{m}} \to \mathbb{F}_{p}\)
	obtained from \(\Omega_1\) and \(\Omega_2\), respectively, admit explicit
	representations given by
	\[
	f(x,y)
	=
	P\!\left(\Tr_{k}^{m}\!\left(y x^{-e}\right)\right)
	+ c_0 \bigl(1 - x^{p^{m}-1}\bigr),
	\]
	and
	\[
	g(x,y)
	=
	P\!\left(\Tr_{k}^{m}\!\left(x y^{-\eta}\right)\right)
	+ c_0 \bigl(1 - y^{p^{m}-1}\bigr),
	\]
	where \(P : \mathbb{F}_{p^{k}} \to \mathbb{F}_{p}\) is a balanced function and
	\(c_0 \in \mathbb{F}_{p}\) is a constant. Similarly, the vectorial version of the generalized \(\mathcal{PS}_{ap}\)
	functions from $\mathbb{F}_{p^{m}} \times \mathbb{F}_{p^{m}}$ to $\mathbb{F}_{p^{k}}$
	is defined using a permutation
	\(P \colon \mathbb{F}_{p^{k}} \to \mathbb{F}_{p^{k}}\) and $c_0\in \mathbb{F}_{p^{k}}$.
	
	In the case \(k = m\), the partitions \(\Omega_1\) and \(\Omega_2\)
	are equivalent to the Desarguesian spread.
	Accordingly, they are called \emph{generalized Desarguesian spreads}, and the
	bent functions obtained from them are referred to as
	\emph{generalized \(\mathcal{PS}_{ap}\) bent functions}. For details, we refer to \cite{akm25,nuwi22}.
	
	In general, a generalized $\mathcal{PS}_{{ap}}$ bent function does not
	belong to the completed Maiorana--McFarland class. Moreover, there exist
	generalized $\mathcal{PS}_{{ap}}$ bent functions that lie neither in
	the partial spread class nor in the completed Maiorana--McFarland class.
	For further details, see~\cite[Examples~5.1 and~5.2]{akm23}.
	
	We remark that generalized \(\mathcal{PS}_{ap}\) bent functions were originally
	presented without the additional terms
	\(c_0 \bigl(1 - x^{p^{m}-1}\bigr)\) and
	\(c_0 \bigl(1 - y^{p^{m}-1}\bigr)\), respectively, as in the classical
	\(\mathcal{PS}_{ap}\) construction. While any \(\mathcal{PS}_{ap}\) bent
	function is EA-equivalent to one of the form given in
	Equation~\eqref{eq:PSap}, these additional terms are nevertheless necessary in
	order to obtain a complete set of inequivalent generalized
	\(\mathcal{PS}_{ap}\) bent functions; see~\cite[Example~1]{akm25}.
	
	\section{Construction of \(GMM\) functions outside the \(MM^{\#}\) class} \label{sec:MM}
	
	We begin by examining the generalized Maiorana--McFarland construction given in~\cite{cmpGMM}
	over finite fields.
	Specifically, we consider functions
	\(f : \mathbb{F}_{p^n} \times \mathbb{F}_{p^k} \times \mathbb{F}_{p^k}
	\to \mathbb{F}_p\) defined by
	\begin{equation}\label{GMM}
		f(x,y,z) = f^{(z)}(x) + \Tr_1^k(yz),
	\end{equation}
	where, for each \(z \in \mathbb{F}_{p^k}\), the function
	\(f^{(z)} : \mathbb{F}_{p^n} \to \mathbb{F}_p\) is bent.
	
	To characterize functions lying outside the completed Maiorana--McFarland
	class, we make use of Dillon's criterion; see Lemma \ref{lem:second-order}.
	In this context, it is essential to understand the behavior of second-order
	derivatives under EA-equivalence.
	It is shown in~\cite{akkmpp26} that, for a Boolean function
	\(f \colon \V_n^{(2)} \to \mathbb{F}_2\), the property of having constant
	second-order derivatives is invariant under EA-equivalence. For the convenience
	of the reader, we include the proof here.
	
	\begin{lemma}\label{lem:second}
		Let \(f \colon \V_n^{(2)} \to \F_2\) be a Boolean function, and let
		\(a,b \in \V_n^{(2)}\) be linearly independent. Let
		\(L \colon \V_n^{(2)} \to \V_n^{(2)}\) be a linear permutation, and let
		\(c,d \in \V_n^{(2)}\) and \(e \in \F_2\).
		Then \(D_a D_b f(x)\) is identically equal to \(0\) (respectively, identically equal to $1$)
		for all \(x \in \V_n^{(2)}\) if and only if
		\begin{equation}\label{eq:second}
			D_{L^{-1}(a)} D_{L^{-1}(b)} \left(\ f\bigl(L(x)+c\bigr)
			+ \langle d,x\rangle_n + e \ \right) 
		\end{equation}
		is also identically \(0\) (respectively, identically \(1\)) for all
		\(x \in \V_n^{(2)}\).
	\end{lemma}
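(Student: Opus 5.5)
The plan is a direct computation. Write $g(x) = f\bigl(L(x)+c\bigr) + \langle d,x\rangle_n + e$ and set $a' = L^{-1}(a)$, $b' = L^{-1}(b)$. First I will record two elementary facts about second-order derivatives of Boolean functions.

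\emph{Fact 1.} $D_{a'}D_{b'}$ is linear over $\F_2$ in the function it acts on.

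\emph{Fact 2.} $D_{a'}D_{b'}$ annihilates every affine function. Indeed, for $h(x) = \langle d,x\rangle_n + e$,
\[
h(x)+h(x+a')+h(x+b')+h(x+a'+b') = \langle d,\, a'+b'+(a'+b')\rangle_n = 0 .
\]
Using both facts, $D_{a'}D_{b'}g(x) = D_{a'}D_{b'}\bigl(f(L(x)+c)\bigr)$.

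Next I compute the remaining term with the linearity of $L$. Put $u = L(x)+c$; then $L(x+a')+c = u+a$, $L(x+b')+c = u+b$ and $L(x+a'+b')+c = u+a+b$. Therefore
\[
D_{a'}D_{b'}\bigl(f(L(x)+c)\bigr) = f(u)+f(u+a)+f(u+b)+f(u+a+b) = (D_aD_bf)\bigl(L(x)+c\bigr).
\]
So $D_{a'}D_{b'}g = (D_aD_bf)\circ T$, where $T(x) = L(x)+c$.

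Finally, $T$ is a bijection of $\V_n^{(2)}$, being a linear permutation followed by a translation. Hence $(D_aD_bf)\circ T$ is identically $0$ (respectively identically $1$) if and only if $D_aD_bf$ is. This gives both directions of the equivalence at once.

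I do not expect a real obstacle. The only points needing care are using the linearity of $L$ so that the shifts by $L^{-1}(a)$ and $L^{-1}(b)$ turn into shifts by $a$ and $b$, and checking that the affine part cancels because it is evaluated at four points summing to zero. Linear independence of $a$ and $b$ is not used. It only makes $D_aD_b$ a genuine second-order derivative, and it is preserved since $L^{-1}(a)$ and $L^{-1}(b)$ are again linearly independent.
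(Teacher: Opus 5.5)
Your proof is correct and follows the paper's argument essentially step for step. You discard the affine term because second-order derivatives annihilate it, use the linearity of $L$ to turn the shifts by $L^{-1}(a)$, $L^{-1}(b)$ into shifts by $a$, $b$ at the point $L(x)+c$, and conclude from the bijectivity of $x\mapsto L(x)+c$; your remark that the linear independence of $a,b$ is never used is also accurate.
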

	
	\begin{proof}
		Since any affine term \(\langle d,x\rangle_n + e\) has vanishing second-order
		derivatives, we may assume without loss of generality that
		\(g(x) = f(L(x)+c)\). By the linearity of \(L\), it follows that
		\begin{align*}
			& D_{L^{-1}(a)} D_{L^{-1}(b)} g(x) \\
			&\quad
			= g\bigl(x + L^{-1}(a) + L^{-1}(b)\bigr)
			+ g\bigl(x + L^{-1}(a)\bigr)
			+ g\bigl(x + L^{-1}(b)\bigr)
			+ g(x) \\
			& \quad
			= f\bigl(L(x + L^{-1}(a) + L^{-1}(b)) + c\bigr) + f\bigl(L(x + L^{-1}(a)) + c\bigr) \\
			&\qquad \qquad \qquad
			+ f\bigl(L(x + L^{-1}(b)) + c\bigr)
			+ f\bigl(L(x) + c\bigr) \\
			& \quad
			= f\bigl(L(x) + a + b + c\bigr)
			+ f\bigl(L(x) + a + c\bigr)
			+ f\bigl(L(x) + b + c\bigr)
			+ f\bigl(L(x) + c\bigr).
		\end{align*}
		Setting \(y = L(x) + c\), we obtain
		\[
		D_{L^{-1}(a)} D_{L^{-1}(b)} g(x)
		= f(y+a+b) + f(y+a) + f(y+b) + f(y)
		= D_a D_b f(y).
		\]
		Since the map \(x \mapsto L(x) + c\) is a permutation of \(\V_n^{(2)}\), the
		assertion follows.
	\end{proof}
	
	\begin{remark}\label{rem:fcircL}
		Let \(f \colon \V_n^{(2)} \to \F_2\) be a Boolean function, and let
		\(L \colon \V_n^{(2)} \to \V_n^{(2)}\) be a linear permutation. Then, by Lemma~\ref{lem:second},
		\(\mathcal{W} \subseteq \V_n^{(2)}\) is an \(\mathcal{M}\)-subspace of \(f\) if and only if
		\(L^{-1}(\mathcal{W})\) is an \(\mathcal{M}\)-subspace of \(f \circ L\).
	\end{remark}
	
	We now analyze the second-order derivative of the function
	\(f : \mathbb{F}_{2^n} \times \mathbb{F}_{2^k} \times \mathbb{F}_{2^k}
	\to \mathbb{F}_2\) defined in Equation~\eqref{GMM}.
	Recall that
	\[
	f(x,y,z) = f^{(z)}(x) + \Tr_1^k(yz).
	\]
	Let
	\(\nu_1 = (u_1,v_1,w_1)\) and
	\(\nu_2 = (u_2,v_2,w_2)\)
	be two distinct nonzero elements of
	\(\mathbb{F}_{2^n} \times \mathbb{F}_{2^k} \times \mathbb{F}_{2^k}\). Then
	\begin{align}\label{eq:secondder}
		&D_{\nu_1}D_{\nu_2}f(x,y,z)
		= f(x,y,z)
		+ f(x+u_1,y+v_1,z+w_1)
		+ f(x+u_2,y+v_2,z+w_2)\\ \nonumber
		&\qquad\qquad 
		+ f(x+u_1+u_2,y+v_1+v_2,z+w_1+w_2)\\[0.5em] \nonumber
		&=f^{(z)}(x)
		+ f^{(z+w_1)}(x+u_1)
		+f^{(z+w_2)}(x+u_2)
		+ f^{(z+w_1+w_2)}(x+u_1+u_2)
		+ \Tr_1^k\!\bigl(w_1v_2+w_2v_1\bigr).
	\end{align}
	
	\begin{remark}\label{rem:Msub}
		In the case \(w_1 = w_2 = 0\), Equation~\eqref{eq:secondder} yields
		\begin{equation}\label{eq:Dfy}
			D_{\nu_1} D_{\nu_2} f(x,y,z)
			=
			f^{(z)}(x)
			+ f^{(z)}(x+u_1)
			+ f^{(z)}(x+u_2)
			+ f^{(z)}(x+u_1+u_2).
		\end{equation}
		In particular, $D_{\nu_1} D_{\nu_2} f(x,y,z)
		=
		D_{u_1} D_{u_2} f^{(z)}(x)$,
		and hence \(D_{\nu_1} D_{\nu_2} f(x,y,z)\) is independent of \(y\).
		Consequently, if the functions \(f^{(z)}\) are \(MM\) functions
		sharing a common \(\mathcal{M}\)-subspace of dimension \(n/2\), then, by
		Dillon's criterion, the function \(f(x,y,z)\) is itself \(MM\). 
				
		Indeed, it suffices to note that \(f(x,y,z)\) admits an \(\mathcal{M}\)-subspace
		of dimension \(k + n/2\).
		Let \(\mathcal{V}\) denote the common \(\mathcal{M}\)-subspace of the functions
		\(f^{(z)}\), with \(\dim \mathcal{V} = n/2\).
		Then, by Equation~\eqref{eq:Dfy}, the set $\mathcal{V}  \times \mathbb{F}_{2^k} \times \{0\}$
		forms an \(\mathcal{M}\)-subspace of \(f\) of dimension \(k + n/2\), which
		establishes the claim.
	\end{remark}
	
	Remark~\ref{rem:Msub} indicates that, in order to construct a \(GMM\) function from \(MM\) functions that does
	not belong to the completed Maiorana--McFarland class, it is necessary to
	combine \(MM\) functions that do not share a common \(\mathcal{M}\)-subspace of
	dimension \(n/2\).
	
	Motivated by this observation, we consider the construction of \(GMM\) functions
	from \(MM\) functions that share only \textit{trivial} \(\mathcal{M}\)-subspaces, i.e.,
	subspaces of dimension at most one.
	Our approach begins with an \(MM\) function on \(\V_n^{(2)}\) admitting a unique
	\(\mathcal{M}\)-subspace \(\mathcal{W}\) of dimension \(n/2\), with the
	additional property that every nontrivial \(\mathcal{M}\)-subspace is contained in
	\(\mathcal{W}\).
	By applying a suitably chosen linear permutation to this function and
	embedding it into a higher-dimensional \(GMM\) framework, we obtain a \(GMM\) function
	that admits only \(\mathcal{M}\)-subspaces of dimension strictly smaller than
	those allowed for \(MM\) functions.
	
	The following proposition formalizes this construction.
	
	\begin{proposition}\label{thm:main}
		Let \(h\) be an \(MM\) function on \(\V_n^{(2)}\) 
		having a unique 
		\(\mathcal{M}\)-subspace \(\mathcal{W}\) of dimension \(n/2\), such that if $\mathcal{U}$ is a nontrivial
		\(\mathcal{M}\)-subspace of $h$, then $\mathcal{U}\subseteq \mathcal{W}$. Let  \(\widetilde{\mathcal{W}}\) be a complementary subspace of \(\V_n^{(2)}\) satisfying 
		\(\V_n^{(2)} =\mathcal{W} \oplus \widetilde{\mathcal{W}}\) (i.e., $\mathcal{W} \cap \widetilde{\mathcal{W}}=\{0\}$ and \(\V_n^{(2)} =\mathcal{W} + \widetilde{\mathcal{W}}\)), and let \(L\) be a linear permutation of \(\V_n^{(2)}\) with 
		\({L}( \widetilde{\mathcal{W}} ) = \mathcal{W}\).
		
		For each \(z\in\mathbb{F}_{2^k}\), define  
		\begin{align}\label{eq:g}
			f^{(z)}(x) =
			\begin{cases}
				h(x), & \text{if } z \in V,\\[6pt]
				(h\circ L)(x), & \text{if } z \not\in V,
			\end{cases}
		\end{align}
		where $	V = \bigl\{\, z \in \mathbb{F}_{2^k} : \Tr_1^k(z)=0 \,\bigr\}$.
		
		With this choice of \(f^{(z)}\) in \eqref{eq:g}, if \(n > 2k+4\), 
		then the function $	f:\V_n^{(2)} \times \mathbb{F}_{2^k} \times \mathbb{F}_{2^k} \to \mathbb{F}_{2}$, defined by $f(x,y,z) = f^{(z)}(x) + \Tr_1^k\!\bigl(y z\bigr)$, is not (equivalent to) an \(MM\) function. 
		Equivalently, if \(\mathcal{U}\) is an \(\mathcal{M}\)-subspace of \(f\), then 
		\(\dim(\mathcal{U}) < n/2 + k\).
	\end{proposition}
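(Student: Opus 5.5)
We argue by contradiction. Suppose \(\mathcal{U}\subseteq \V_n^{(2)}\times\F_{2^k}\times\F_{2^k}\) is an \(\mathcal{M}\)-subspace of \(f\) with \(\dim\mathcal{U}= n/2+k\). Consider the projection \(\rho(u,v,w)=w\) onto the last coordinate, and set \(\mathcal{U}_0=\mathcal{U}\cap\ker\rho=\{(u,v,0)\in\mathcal{U}\}\). Since \(\dim\rho(\mathcal{U})\le k\), we get \(\dim\mathcal{U}_0\ge n/2\). Then project \(\mathcal{U}_0\) to the first coordinate, obtaining \(\mathcal{U}_0'=\{u:(u,v,0)\in\mathcal{U}_0\}\). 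The kernel of this projection lies in \(\{0\}\times\F_{2^k}\times\{0\}\), so \(\dim\mathcal{U}_0'\ge n/2-k>2\), using \(n>2k+4\).

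By Remark~\ref{rem:Msub}, for \(\nu_1,\nu_2\in\mathcal{U}_0\) we have \(D_{\nu_1}D_{\nu_2}f(x,y,z)=D_{u_1}D_{u_2}f^{(z)}(x)\). Vanishing for all \(z\) shows that \(\mathcal{U}_0'\) is simultaneously an \(\mathcal{M}\)-subspace of \(h\) (take \(z\in V\)) and of \(h\circ L\) (take \(z\notin V\); such \(z\) exist since \(\Tr_1^k\) is onto). The subspace \(\mathcal{U}_0'\) is nontrivial, having dimension \(\ge 2\). The hypothesis on \(h\) therefore gives \(\mathcal{U}_0'\subseteq\mathcal{W}\). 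By Remark~\ref{rem:fcircL}, \(L(\mathcal{U}_0')\) is a nontrivial \(\mathcal{M}\)-subspace of \(h\), so \(L(\mathcal{U}_0')\subseteq\mathcal{W}\), i.e. \(\mathcal{U}_0'\subseteq L^{-1}(\mathcal{W})=\widetilde{\mathcal{W}}\). Hence \(\mathcal{U}_0'\subseteq\mathcal{W}\cap\widetilde{\mathcal{W}}=\{0\}\), which contradicts \(\dim\mathcal{U}_0'\ge 2\).

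A sharper count may be needed, because the contradiction only requires \(\dim\mathcal{U}_0'\ge 2\). The stated bound \(n>2k+4\) would allow the weaker target \(\dim \mathcal{U}<n/2+k\) to be reached with slack. If the elements of \(\mathcal{U}\) with \(w\ne0\) cause trouble, one can refine the argument by fixing \(\nu_1\in\mathcal{U}_0\) and \(\nu_2\in\mathcal{U}\) with \(w_2\ne 0\), and using \eqref{eq:secondder} to constrain things further. The first argument, however, does not need this.

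The main obstacle I anticipate is justifying that "nontrivial" in the hypothesis covers subspaces of dimension \(\ge2\). The paper calls subspaces of dimension at most one trivial, and the counting above guarantees dimension \(\ge 3\), so this is safe. A second point is ensuring that \(\mathcal{U}_0'\) is genuinely an \(\mathcal{M}\)-subspace of \(h\) for every pair in it. Every pair \(u_1,u_2\in\mathcal{U}_0'\) lifts to some \(\nu_1,\nu_2\in\mathcal{U}_0\), and the derivative does not depend on the \(v\)-components, so this holds. Finally, since \(\operatorname{ind}\) is invariant under EA-equivalence and the \(MM^{\#}\) criterion requires index \(n/2+k\) in dimension \(n+2k\), Lemma~\ref{lem:second-order} shows that \(f\) is not in \(MM^{\#}\).
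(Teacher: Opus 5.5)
Your argument is correct, and it takes a different and much shorter route than the paper's.

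\textbf{The paper's route.} The paper works inside $\mathcal{U}_V=\{(u,v,w)\in\mathcal{U}:\Tr_1^k(w)=0\}$, which has codimension at most one in $\mathcal{U}$. There the cross term $\Tr_1^k(w_1v_2+w_2v_1)$ in \eqref{eq:secondder} survives and can equal $1$, so pairs with $D_{u_1}D_{u_2}h\equiv 1$ must be allowed. That is what forces the maximal set $\mathfrak{S}$, the case distinction, and the argument that the first components of $\mathcal{U}_V$ lie in a $2$-dimensional space. The outcome is $\dim\mathcal{U}_V\le 2k+1$, hence $\dim\mathcal{U}\le 2k+2$. The hypothesis $n>2k+4$ is exactly what makes $2k+2<n/2+k$.

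\textbf{Your route.} You cut by $w=0$ instead, which costs up to $k$ dimensions rather than one. In exchange, on $\mathcal{U}_0$ the cross term vanishes identically. The second derivatives are then exactly $D_{u_1}D_{u_2}f^{(z)}$, so $\mathcal{U}_0'$ is a common $\mathcal{M}$-subspace of $h$ and $h\circ L$. The hypothesis on $h$ together with Remark~\ref{rem:fcircL} then forces $\dim\mathcal{U}_0'\le 1$, with no case analysis.

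\textbf{What your route gives beyond the statement.} Run as an upper bound rather than a contradiction, your argument shows $\dim\mathcal{U}\le k+k+1=2k+1$ for every $\mathcal{M}$-subspace. This is one better than the paper's bound, so you only need $n/2-k\ge 2$. In particular you also cover $n=2k+4$, equivalently $m=k+2$ in Theorem~\ref{thm:ex}, which the stated hypothesis $n>2k+4$ excludes since $n$ is even. The paper's longer analysis yields extra structural information about $\mathcal{U}_V$, but nothing the statement requires.

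The hedging in your third paragraph is therefore unnecessary: elements of $\mathcal{U}$ with $w\neq 0$ never need to be examined.
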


	\begin{proof}
		We recall that, by Remark~\ref{rem:fcircL}, the subspace
		\(L^{-1}(\mathcal{W}) = \widetilde{\mathcal{W}}\) is an \(\mathcal{M}\)-subspace of
		\(h \circ L\) of dimension \(n/2\).
		Moreover, by assumption, \(\widetilde{W}\) is the unique
		\(\mathcal{M}\)-subspace with the property that every nontrivial
		\(\mathcal{M}\)-subspace of \(h\circ L\) is contained in \(\widetilde{W}\).
		Equivalently, for {distinct} nonzero
		\({a},{b}\in \V_n^{(2)}\), the equality $D_{{a}}D_{{b}} \, h (L (x))=0$ holds for all $x\in \V_n^{(2)}$
		if and only if \({a},{b}\in \widetilde{W}\).
		
		\medskip
		
		Let \(\mathcal{U}\) be an \(\mathcal{M}\)-subspace of \(f\).
		We derive an upper bound on the dimension \(\dim(\mathcal{U})\) of $\mathcal{U}$ in two steps.
		
		\medskip
		\noindent\textbf{Step 1.}
		Define
		\begin{equation}\label{eq:UV}
			\mathcal{U}_V
			=
			\bigl\{\,
			(u,v,w)\in \mathcal{U} : \Tr_1^k(w)=0
			\,\bigr\}.
		\end{equation}
		Let
		\(\nu_1=(u_1,v_1,w_1)\) and \(\nu_2=(u_2,v_2,w_2)\)
		be distinct nonzero elements of \(\mathcal{U}_V\).
		Since \(V\) is a subspace, \(w_1+w_2\in V\).
		
		Assume first that
		\(\Tr_1^k(w_1v_2+w_2v_1)=0\).
		Then for any \(z\in V\),
		Equation~\eqref{eq:secondder} yields
		\begin{align*}
			D_{\nu_1}D_{\nu_2}f(x,y,z)
			&= h(x)+h(x+u_1)+h(x+u_2)+h(x+u_1+u_2) \\
			&= D_{u_1}D_{u_2}h(x).
		\end{align*}
		Hence,
		\(D_{\nu_1}D_{\nu_2}f(x,y,z)=0\) for all \(x\in \V_n^{(2)}\)
		if and only if either
		\(u_1,u_2\) are distinct nonzero elements of the
		\(\mathcal{M}\)-subspace of \(h\), or $(u_1,u_2)\in \{(0,u),(u,0),(u,u):u\in \V_n^{(2)}\}$.
		
		Similarly, for \(z\notin V\), $D_{\nu_1}D_{\nu_2}f(x,y,z)
		= D_{u_1}D_{u_2}\, h\circ L(x)$,
		and vanishing occurs if and only if either
		\(u_1,u_2\) belong to the \(\mathcal{M}\)-subspace of
		\(h\circ L\), or  $(u_1,u_2)\in \{(0,u),(u,0),(u,u):u\in \V_n^{(2)}\}$.
		Since \(\mathcal{W}\cap \widetilde{\mathcal{W}}=\{0\}\),
		we must have $(u_1,u_2)\in \{(0,u),(u,0),(u,u):u\in \V_n^{(2)}\}$.
		
		We define $\mathfrak S \subseteq \mathcal U_V$ to be a subset such that, 
		for any $(u_i,v_i,w_i), (u_j,v_j,w_j)\in \mathfrak S$, one has $\Tr_1^k\!\left(w_i v_j + w_j v_i\right)=0$. In other words,
		\[
		\mathfrak{S}
		=
		\left\{
		\nu=(u,v,w)\in \mathcal{U}_V
		\; :\;
		\Tr_1^k\!\left(w v_i + w_i v\right)=0
		\ \text{for all } (u_i,v_i,w_i)\in \mathfrak{S}
		\right\}.\]
		Choose \(\mathfrak{S}\) of maximal dimension.
		If \(\mathfrak{S}\) contains two elements with nonzero first components,
		then these components must be the same.
		Hence, for a fixed nonzero \(u\in \V_n^{(2)} \),
		\(\mathfrak{S}\) is contained in
		\[
		\{0,u\}\times
		\left\{
		(v_j,w_j)\in \mathbb{F}_{2^k}^2
		\; : \;
		\Tr_1^k(w_j)=0
		\ \text{and}\
		\Tr_1^k(w_j v_i + w_i v_j)=0
		\ \text{for all } i
		\right\},
		\]  
		which is an \(\mathcal{M}\)-subspace of \(f\). 
		
		\smallskip
		
		\noindent\textbf{Case (i):}
		If \(\mathfrak{S}=\{(0,0,0)\}\), then
		we claim that \(\dim(\mathcal{U}_V)\leq 2\).
		Suppose, for the sake of contradiction, that \(\mathcal{U}_V\) contains
		three linearly independent vectors
		\(\nu_i=(u_i,v_i,w_i)\), \(i=1,2,3\).
		Since \(\mathfrak{S}=\{(0,0,0)\}\), we have $\Tr_1^k\!\bigl(w_i v_j + w_j v_i\bigr)=1$ for all $i\neq j$.
		By linear independence, the vectors
		\((u_1+u_2,v_1+v_2,w_1+w_2)\) and
		\((u_3,v_3,w_3)\)
		are distinct nonzero elements of \(\mathcal{U}_V\).
		Moreover, we have $\Tr_1^k\!\bigl((w_1+w_2)v_3+w_3(v_1+v_2)\bigr)=0$.
		Hence,
		\((u_1+u_2,v_1+v_2,w_1+w_2)\) and
		\((u_3,v_3,w_3)\)
		both belong to \(\mathfrak{S}\),
		contradicting the assumption that
		\(\mathfrak{S}=\{(0,0,0)\}\).
		
		\smallskip
		\noindent\textbf{Case (ii):}
		Suppose that \(\mathfrak{S}\neq\{(0,0,0)\}\). If
		\(\mathfrak{S} =\mathcal{U}_V\),
		then $\mathrm{dim}(\mathfrak{S}) =  \mathrm{dim}(\mathcal{U}_V)\leq 2k$.
		
		In the case $\mathfrak{S} \not\subseteq \mathcal{U}_V$, choose
		\(\nu_1=(u_1,v_1,w_1)\in \mathcal{U}_V\setminus \mathfrak{S}\).
		By maximality, there exists
		\(\nu_2=(u_2,v_2,w_2)\in \mathfrak{S}\)  with
		\(\Tr_1^k(w_1v_2+w_2v_1)=1\),
		which forces \(u_2=u\) and \(u_1\notin\{0,u\}\).
		Otherwise,
		\[
		h(x)+h(x+u_1)+h(x+u_2)+h\bigl(x+u_1+u_2\bigr)=0,
		\]
		which would imply that, for any \(z\in V\), $D_{\nu_1}D_{\nu_2}f(x,y,z)=1$.
		
		Conversely, for any \(\nu_i=(u_i,v_i,w_i)\in \mathfrak{S}\) with \(u_i=u\),
		we have $\Tr_1^k\!\bigl(w_1v_i+w_iv_1\bigr)=1$.
		Indeed, if this were not the case, then
		\(\{u,u_1\}\subseteq \mathcal{W} \cap \widetilde{\mathcal{W} }\),
		which contradicts our assumption that
		\(\mathcal{W} \cap \widetilde{\mathcal{W} }=\{0\}\).
		
		We claim that any $\mathcal{M}$-subspace of $f$ contains at most two vectors whose last components have trace zero and whose first components are linearly independent.
		This implies that
		\begin{align*}
			\mathcal{U}_V
			\subseteq
			\langle u_1,u_2\rangle
			\times
			\Bigl\{(v,w)\in \mathbb{F}_{2^k}\times \mathbb{F}_{2^k} :
			\Tr_1^k(w)=0
			\Bigr\},
		\end{align*}
		which is a subspace of dimension \(2k+1\).
		Suppose, for the sake of contradiction, that there exist three vectors \(\nu_i=(u_i,v_i,w_i) \in \mathcal{U}_V\), \(i=1,2,3,\) with linearly independent first components satisfying $\Tr_1^k\!\bigl(w_i v_j+w_j v_i\bigr)=1$ for all $i\neq j$.
		Since \(\{u_1,u_2,u_3\}\) is linearly independent, the vectors
		\((u_1+u_2,v_1+v_2,w_1+w_2)\) and
		\((u_3,v_3,w_3)\)
		are distinct nonzero elements of $\mathcal{U}_V$. 
		Moreover, we have $\Tr_1^k\!\bigl((w_1+w_2)v_3+w_3(v_1+v_2)\bigr)=0$.
		By the argument above, this implies that
		\[
		(u_1+u_2,u_3)\in \{(0,u),(u,0),(u,u):u\in \V_n^{(2)}\}.
		\]
		Since \(u_1\neq u_2\) and \(u_3\neq 0\), we must have \(u_1+u_2=u_3\),
		which contradicts the linear independence of \(\{u_1,u_2,u_3\}\).
		
		Thus, any \(\mathcal{M}\)-subspace of \(f\) consisting of 
		\((u,v,w)\) with \(w\in V\) has dimension at most \(2k+1\). In particular, the dimension of $\mathcal{U}_V$ is at most \(2k+1\).
		
		\medskip
		\noindent\textbf{Step 2:}  
		We now consider the set $\bigl\{\, (u,v,w) \in \mathcal{U} : \Tr_1^k(w)=1 \,\bigr\}$. 
		Equivalently, for a fixed $(u_1,v_1,w_1)\in\mathcal{U} $ with $\Tr_1^k(w_1)=1$, we can consider  
		\[
		\bigl\{\, (u_1,v_1,w_1),\; (u_1+u,\;v_1+v,\;w_1+w) \, : \, (u,v,w)\in \mathcal{U}, \, \Tr_1^k(w)=1 \,\bigr\},
		\]
		as both sets span the same subspace.
		Since \(\Tr_1^k(w_1)=\Tr_1^k(w)=1\), we have \(\Tr_1^k(w_1+w)=0\).  
		In particular,  
		\((u_1+u,v_1+v,w_1+w)\in \mathcal{U}_V\), 
		where \(\mathcal{U}_V\) is defined in Equation \eqref{eq:UV}.  
		Hence, \(\mathcal{U}_V\) and \((u_1,v_1,w_1)\) generate the whole space \(\mathcal{U}\), which implies that 
		\(\dim (\mathcal{U})\leq 2k+2\). Then our assumption $n > 2k + 4$ implies that $\dim(\mathcal U) < \tfrac{n}{2} + k$,
		which yields the desired conclusion.
	\end{proof}
	
	In Proposition~\ref{thm:main}, in order to construct functions in the \(GMM\) class that do not belong to the completed Maiorana--McFarland class, we start from Maiorana--McFarland functions.
	Specifically, for
	\(\V_n^{(2)} \cong \mathbb{F}_{2^m} \times \mathbb{F}_{2^m}\) with \(n = 2m\),
	we consider the function
	\(h : \mathbb{F}_{2^m} \times \mathbb{F}_{2^m} \to \mathbb{F}_2\) defined by $h(x_1,x_2) = \Tr_{1}^{m}\!\bigl(x_1\,\pi(x_2)\bigr)$,
	where \(\pi\) is a permutation of \(\mathbb{F}_{2^m}\).
	For $a=(a_{1},a_{2}), b=(b_{1},b_{2}) \in \mathbb{F}_{2^m}\times\mathbb{F}_{2^m}$, 
	\begin{align} \label{eq:der}
		D_{a}D_{b}h(x_1,x_2) 
		&= \Tr_{1}^{m}\!\Bigl(
		x_1\,\pi(x_2)
		+\,(x_1+a_{1})\,\pi(x_2+a_{2}) \nonumber \\[4pt]
		&\quad+\,(x_1+b_{1})\,\pi(x_2+b_{2})
		+\,(x_1+a_{1}+b_{1})\,\pi(x_2+a_{2}+b_{2})
		\Bigr) \nonumber \\[6pt]
		&= \Tr_{1}^{m}\!\Bigl(
		x_1\bigl(\pi(x_2)+\pi(x_2+a_{2})+\pi(x_2+b_{2})+\pi(x_2+a_{2}+b_{2})\bigr)
		\Bigr) \nonumber \\[-1pt]
		&\quad+\;\Tr_{1}^{m}\!\Bigl(
		a_{1}\,\pi(x_2+a_{2})
		+\;b_{1}\,\pi(x_2+b_{2})
		+\;(a_{1}+b_{1})\,\pi(x_2+a_{2}+b_{2})
		\Bigr).
	\end{align}
	Recall that $ \mathcal{W} =\mathbb{F}_{2^m} \times \{0\}$
	is an $\mathcal{M}$-subspace of $h$ of dimension $m=n/2$, called the canonical $\mathcal{M}$-subspace.  
	Our goal is to determine examples of $\pi$ ensuring that $h$ has this unique $\mathcal{M}$-subspace $\mathcal{W}$ of dimension $n/2$ such that, if $\mathcal{U}$ is a nontrivial $\mathcal{M}$-subspace of $h$, then necessarily $\mathcal{U} \subseteq \mathcal{W}$. Equivalently, for distinct nonzero $a, b \in \mathbb{F}_{2^m}\times\mathbb{F}_{2^m}$,
	\[
	D_{a}D_{b}\, h(x_1,x_2)=0 
	\quad\text{for all } (x_1,x_2)\in\mathbb{F}_{2^m}\times\mathbb{F}_{2^m}
	\quad\iff\quad 
	a,b\in \mathcal{W}.
	\]
	From Equation~\eqref{eq:der}, we see that $D_{a}D_{b}\, h(x_1,x_2)=0 $ for all $(x_1,x_2)\in\mathbb{F}_{2^m}\times\mathbb{F}_{2^m}$ if and only if, for every $x_2\in \mathbb{F}_{2^m}$,
	\begin{align*}
		\pi(x_2)+\pi(x_2+a_{2})+\pi(x_2+b_{2})+\pi(x_2+a_{2}+b_{2}) &= 0, \\[4pt] 
		\Tr_{1}^{m}\!\Bigl(
		a_{1}\,\pi(x_2+a_{2})
		+\;b_{1}\,\pi(x_2+b_{2})
		+\;(a_{1}+b_{1})\,\pi(x_2+a_{2}+b_{2})
		\Bigr) &= 0.
	\end{align*}
	Hence, we obtain the following result.
	\begin{corollary}\label{cor:char}
		Let
		\(h:\mathbb{F}_{2^m}\times \mathbb{F}_{2^m}\to \mathbb{F}_{2}\)
		be an \(MM\) function defined by $h(x_1,x_2)=\Tr_{1}^{m}\!\bigl(x_1\,\pi(x_2)\bigr)$,
		where \(\pi\) is a permutation of \(\mathbb{F}_{2^m}\).
		Then $ \mathcal{W} =\mathbb{F}_{2^m} \times \{0\}$
		is the unique \(\mathcal{M}\)-subspace of \(h\) of dimension \(m\) that contains
		all nontrivial \(\mathcal{M}\)-subspaces of \(h\) if and only if the permutation
		\(\pi\) satisfies the following property.
		
		\medskip
		\noindent\textbf{Property (P).}
		For all \(x \in \mathbb{F}_{2^m}\),
		\begin{align}
			\pi(x)+\pi(x+a_{2})+\pi(x+b_{2})+\pi(x+a_{2}+b_{2})
			&= 0, \label{eq:pi1}\\[6pt]
			\Tr_{1}^{m}\!\Bigl(
			a_{1}\,\pi(x+a_{2})
			+\;b_{1}\,\pi(x+b_{2})
			+\;(a_{1}+b_{1})\,\pi(x+a_{2}+b_{2})
			\Bigr)
			&= 0, \label{eq:pi2}
		\end{align}
		if and only if one of the following holds:
		\[
		(a_1,a_2)=(0,0), \quad
		(b_1,b_2)=(0,0), \quad
		(a_1,a_2)=(b_1,b_2), \quad
		\text{or} \quad a_2=b_2=0.
		\]
	\end{corollary}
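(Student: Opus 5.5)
The plan is to reduce the statement about $\mathcal{M}$-subspaces to a statement about pairs of vectors, and then to match that pair condition with the computation in Equation~\eqref{eq:der}.

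\textbf{Reduction to pairs.} An $\mathcal{M}$-subspace is \emph{nontrivial} when it has dimension at least two. Such a subspace contains two distinct nonzero vectors $a,b$ with $D_aD_bh=0$. Conversely, for any $a,b$ with $D_aD_bh\equiv 0$, the span $\langle a,b\rangle$ is an $\mathcal{M}$-subspace. This holds because $D_aD_ah=0$ in characteristic two, and because $D_{a}D_{b}$ is bilinear modulo lower-order terms; in fact $D_{a+b}D_c$ and $D_aD_c+D_bD_c$ differ by a shifted $D_aD_bD_c$, so the vanishing conditions propagate over the span. Hence the assertion "$\mathcal{W}$ contains every nontrivial $\mathcal{M}$-subspace of $h$" is equivalent to the pair condition
\[
D_aD_bh\equiv 0 \text{ with } a,b \text{ distinct and nonzero} \iff a,b\in\mathcal W .
\]
This pair condition also forces uniqueness of $\mathcal{W}$. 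Indeed, any other $m$-dimensional $\mathcal{M}$-subspace is nontrivial when $m\ge 2$, so it lies in $\mathcal{W}$ and therefore equals $\mathcal{W}$.

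\textbf{Translation via Equation~\eqref{eq:der}.} Equation~\eqref{eq:der} expresses $D_aD_bh$ as $\Tr_1^m(x_1 Q(x_2))+R(x_2)$. Here $Q(x_2)$ is the left-hand side of \eqref{eq:pi1} evaluated at $x_2$, and $R(x_2)$ is the left-hand side of \eqref{eq:pi2}. For fixed $x_2$, the map $x_1\mapsto \Tr_1^m(x_1Q)+R$ is identically zero exactly when $Q(x_2)=0$ and $R(x_2)=0$, by non-degeneracy of the trace form. So $D_aD_bh\equiv0$ holds if and only if \eqref{eq:pi1} and \eqref{eq:pi2} hold for all $x$.

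\textbf{Degenerate cases.} If $a=0$, $b=0$, or $a=b$, then $D_aD_bh\equiv 0$ trivially, so \eqref{eq:pi1} and \eqref{eq:pi2} hold in these cases. If $a_2=b_2=0$, then \eqref{eq:pi1} is immediate and \eqref{eq:pi2} becomes $\Tr_1^m\bigl((a_1+b_1+a_1+b_1)\pi(x)\bigr)=0$. So the listed cases are exactly the pairs for which the vanishing is automatic, and the case $a_2=b_2=0$ is exactly $a,b\in\mathcal{W}$.

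\textbf{Combining.} Together, these steps show that Property~(P) is precisely the pair condition, which gives both directions of the corollary. Note that the pair condition restricted to distinct nonzero $a,b$ matches (P) once the trivial cases are added back.

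The only step needing care is the reduction from subspaces to pairs. A nontrivial $\mathcal{M}$-subspace is a subspace, and if every pair in it lies in $\mathcal{W}$, then every nonzero vector of it lies in $\mathcal{W}$: pick any other nonzero vector to form a pair. Conversely, a pair outside $\mathcal{W}$ with vanishing derivative spans a two-dimensional $\mathcal{M}$-subspace not contained in $\mathcal{W}$. I expect this to be the main obstacle, specifically verifying that the span of $\{a,b\}$ is an $\mathcal{M}$-subspace. This needs $D_aD_{a+b}h=D_aD_bh$ shifted by $a$, and that identity is a direct computation.
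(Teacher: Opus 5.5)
Your proposal is correct and follows essentially the same route as the paper: the paper likewise rephrases the subspace condition as the pair condition ($D_aD_bh\equiv 0$ for distinct nonzero $a,b$ if and only if $a,b\in\mathcal{W}$), then reads off \eqref{eq:pi1} and \eqref{eq:pi2} from Equation~\eqref{eq:der}. The difference is that you justify the reduction to pairs, which the paper asserts with a bare ``Equivalently''; note that $D_aD_{a+b}h=D_aD_bh$ holds exactly, with no shift, and your restriction to $m\ge 2$ for the uniqueness claim is a genuine (if minor) caveat that the paper does not mention.
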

	
	\begin{remark}
		A permutation $\pi$ of $\mathbb{F}_{2^{\,m}}$ is said to satisfy
		\emph{property~(P1)} (see \cite{CaCha,PaPoKuZ}) if
		$D_a D_b \pi \neq 0$ for all linearly independent elements
		$a,b \in \mathbb{F}_{2^{\,m}}$.
		Such permutations are used to construct $MM$ functions $f(x,y)=\Tr_{1}^{m}\!\bigl(x\,\pi(y)\bigr)+h(y)$
		on $\mathbb{F}_{2^{\,m}} \times \mathbb{F}_{2^{\,m}}$ that admit no bent
		$4$-decomposition (see Section~\ref{sec:decom} for the definition) or possess a unique
		$m$-dimensional $\mathcal{M}$-subspace, namely the canonical one
		$\mathbb{F}_{2^{\,m}} \times \{0\}$.
	\end{remark}
	
	\begin{theorem}\label{thm:ex}
		Let \(m,k\) be positive integers, and let \(\pi\) be a permutation of
		\(\mathbb{F}_{2^m}\) satisfying \textbf{Property~(P)}
		of Corollary~\ref{cor:char}.
		For each \(z \in \mathbb{F}_{2^k}\), define the Boolean function
		\(f^{(z)} : \mathbb{F}_{2^m} \times \mathbb{F}_{2^m} \to \mathbb{F}_2\) by
		\[
		f^{(z)}(x_1,x_2)=
		\begin{cases}
		\Tr_1^m\!\bigl(x_1\,\pi(x_2)\bigr), & \text{if } \Tr_1^k(z)=0, \\[6pt]
		\Tr_1^m\!\bigl(x_2\,\pi(x_1)\bigr), & \text{if } \Tr_1^k(z)=1.
		\end{cases}
		\]
		If \(m > k+2\), then the function
		\[
		f :
		\mathbb{F}_{2^m} \times \mathbb{F}_{2^m} \times
		\mathbb{F}_{2^k} \times \mathbb{F}_{2^k}
		\rightarrow \mathbb{F}_2,
		\qquad
		f(x_1,x_2,y,z)
		= f^{(z)}(x_1,x_2) + \Tr_1^k(yz),
		\]
		is a \(GMM\) function that does not belong to the \(MM^{\#}\) class.
	\end{theorem}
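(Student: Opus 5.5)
The plan is to deduce Theorem~\ref{thm:ex} directly from Proposition~\ref{thm:main}, with $n=2m$ and $h(x_1,x_2)=\Tr_1^m(x_1\pi(x_2))$. Two things must be checked: the hypotheses on $h$, and the existence of a suitable linear permutation $L$ realizing the second case of \eqref{eq:g}. After that, the dimension bound gives the conclusion through Dillon's criterion.

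\textbf{Step 1.} Since $\pi$ satisfies Property~(P), Corollary~\ref{cor:char} shows that $h$ has the canonical subspace $\mathcal{W}=\mathbb{F}_{2^m}\times\{0\}$ as its unique $\mathcal{M}$-subspace of dimension $m=n/2$. Moreover, every nontrivial $\mathcal{M}$-subspace of $h$ is contained in $\mathcal{W}$. This is exactly the hypothesis of Proposition~\ref{thm:main}.

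\textbf{Step 2.} Take $\widetilde{\mathcal{W}}=\{0\}\times\mathbb{F}_{2^m}$, so that $\V_n^{(2)}=\mathcal{W}\oplus\widetilde{\mathcal{W}}$. Let $L$ be the swap $L(x_1,x_2)=(x_2,x_1)$. It is a linear permutation of $\mathbb{F}_{2^m}\times\mathbb{F}_{2^m}$ with $L(\widetilde{\mathcal{W}})=\mathcal{W}$. Then
\[
(h\circ L)(x_1,x_2)=h(x_2,x_1)=\Tr_1^m\bigl(x_2\pi(x_1)\bigr),
\]
so the functions $f^{(z)}$ of the theorem coincide with those in \eqref{eq:g}, where $V=\{z:\Tr_1^k(z)=0\}$.

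\textbf{Step 3.} The function $f$ is a $GMM$ function of the form \eqref{GMM}, since each $f^{(z)}$ is an $MM$ bent function ($h\circ L$ is EA-equivalent to $h$). The condition $m>k+2$ is equivalent to $n=2m>2k+4$. Proposition~\ref{thm:main} therefore implies that every $\mathcal{M}$-subspace $\mathcal{U}$ of $f$ satisfies $\dim(\mathcal{U})<n/2+k$. The total dimension is $n+2k$, so Lemma~\ref{lem:second-order} requires an $\mathcal{M}$-subspace of dimension $(n+2k)/2=m+k$ for membership in $MM^{\#}$. No such subspace exists, hence $f\notin MM^{\#}$.

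Almost all the work is already done in Proposition~\ref{thm:main}. The only points needing care are confirming that the swap $L$ maps the complement onto $\mathcal{W}$, not the reverse, and matching the dimension count $m+k=n/2+k$ with half the ambient dimension.
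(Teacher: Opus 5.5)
Your proposal is correct and follows the paper's proof: use Corollary~\ref{cor:char} to get the hypotheses on $h(x_1,x_2)=\Tr_1^m(x_1\pi(x_2))$, take $\widetilde{\mathcal{W}}=\{0\}\times\mathbb{F}_{2^m}$ and the swap $L(x_1,x_2)=(x_2,x_1)$, and apply Proposition~\ref{thm:main} with $n=2m$. Your checks that $m>k+2 \iff n>2k+4$ and that $m+k$ is half the ambient dimension $2m+2k$ are the bookkeeping the paper leaves implicit.
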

	
	\begin{proof}
		By Corollary~\ref{cor:char}, the function $h(x_1,x_2)=\Tr_{1}^{m}\!\bigl(x_1\,\pi(x_2)\bigr)$
		admits a unique \(\mathcal{M}\)-subspace containing all nontrivial
		\(\mathcal{M}\)-subspaces of \(h\), namely $ \mathcal{W}= \mathbb{F}_{2^m}\times \{0\}$.
		Set \(\widetilde{W}=\{0\}\times \mathbb{F}_{2^m}\), and let
		\(L:\mathbb{F}_{2^m}\times\mathbb{F}_{2^m}\to
		\mathbb{F}_{2^m}\times\mathbb{F}_{2^m}\)
		be the linear permutation defined by
		\(L(x_1,x_2)=(x_2,x_1)\). Note that $\mathbb{F}_{2^m}\times\mathbb{F}_{2^m}
		=\mathcal{W} \oplus \widetilde{\mathcal{W}}$ and $L(\widetilde{\mathcal{W}} )=\mathcal{W} $.
		Then the conclusion follows directly from Proposition~\ref{thm:main}.
	\end{proof}
	
	We begin with an auxiliary result required to exhibit a permutation
	\(\pi\) satisfying \textbf{Property~(P)} of
	Corollary~\ref{cor:char}.
	
	\begin{lemma}	\label{lem:trace}
		Let $m \geq 4$ and let $c,d$ be nonzero elements of $\mathbb{F}_{2^m}$.  
		Then $\Tr_{1}^{m}\!\left(d\left(\tfrac{1}{x}+\tfrac{1}{x+c}\right)\right) $
		does not vanish identically on
		\(\mathbb{F}_{2^m}\setminus\{0,c\}\).
	\end{lemma}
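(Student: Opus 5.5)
We argue by contradiction and use a counting argument. Suppose that $\Tr_1^m\bigl(d(\tfrac1x+\tfrac1{x+c})\bigr)=0$ for every $x\in\mathbb{F}_{2^m}\setminus\{0,c\}$. First simplify: $\tfrac1x+\tfrac1{x+c}=\tfrac{c}{x(x+c)}$. Substituting $x=ct$ with $t\notin\{0,1\}$ turns the expression into $\tfrac{d}{c}\cdot\tfrac{1}{t(t+1)}=\tfrac{d}{c}\cdot\tfrac{1}{t^2+t}$. Set $\delta=d/c\neq0$. The assumption then says that $\Tr_1^m(\delta/s)=0$ for every $s$ in the image $S=\{t^2+t : t\in\mathbb{F}_{2^m}\setminus\{0,1\}\}$.

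Next identify $S$. The map $t\mapsto t^2+t$ is $2$-to-$1$ with kernel $\{0,1\}$, and its image is the hyperplane $H=\{s:\Tr_1^m(s)=0\}$. Hence $S=H\setminus\{0\}$, which has $2^{m-1}-1$ elements. The assumption becomes $\Tr_1^m(\delta s^{-1})=0$ for all nonzero $s\in H$.

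Then count. The map $s\mapsto \delta s^{-1}$ is injective on $\mathbb{F}_{2^m}^*$, so it sends $H\setminus\{0\}$ onto a set of $2^{m-1}-1$ nonzero elements, all lying in $H$. Since $H\setminus\{0\}$ has exactly $2^{m-1}-1$ elements, the map $s\mapsto\delta/s$ must permute $H^*=H\setminus\{0\}$. So $H^*$ is closed under $s\mapsto \delta/s$, and therefore $H^*\cdot H^*$-type relations hold: for $s,s'\in H^*$, both $\delta/s$ and $\delta/s'$ lie in $H$, and so does their sum $\delta(s+s')/(ss')$. Equivalently, the set $\{1/s : s\in H^*\}\cup\{0\}=\delta^{-1}H$ is an additive subgroup. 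Thus the inversion map $x\mapsto x^{-1}=x^{2^m-2}$ sends the $\mathbb{F}_2$-hyperplane $H$ onto the hyperplane $\delta^{-1}H$.

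The main step is to rule this out for $m\ge4$. My plan uses the following fact: if $x\mapsto x^{-1}$ maps a subspace $A$ of dimension $\ge 2$ (together with $0$) onto a subspace, then $A$ is closed under $x\mapsto ax\,(a+x)^{-1}\dots$. Concretely, take $a,b\in A^*$ with $a\neq b$. Then $1/a+1/b=(a+b)/(ab)$ must equal $1/w$ for some $w\in A$, namely $w=ab/(a+b)$. So $A$ is closed under the operation $(a,b)\mapsto ab/(a+b)$, the "harmonic" sum. Combined with additive closure, this forces $A$ to be closed under multiplication up to scaling: normalizing so that $1\in A$ (replace $A$ by $a_0^{-1}A$), closure gives $b/(1+b)\in A$, hence $1/(1+b)\in A$, and so $1/u\in A$ for all $u\in A^*$. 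Then $A$ is closed under inverses and addition. The identity $x^2=\bigl(x^{-1}+(x+1)^{-1}\bigr)^{-1}+x$, which uses $1/x+1/(x+1)=1/(x^2+x)$, gives closure under squaring. Hua's identity, or the standard argument that an additive subgroup containing $1$ and closed under inversion is a subfield, then shows that $A$ is a subfield $\mathbb{F}_{2^j}$ of $\mathbb{F}_{2^m}$. But $|A|=2^{m-1}$, and $2^{m-1}$ is the order of a subfield of $\mathbb{F}_{2^m}$ only if $(m-1)\mid m$, that is, only if $m\le2$. This contradicts $m\ge4$. (For $m=3$ the hyperplane has size $4$, which is not a subfield either, so the bound is generous.)

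The delicate point I expect is verifying Hua's subfield argument in characteristic $2$. If it is troublesome, an alternative is a direct character-sum or degree count: the polynomial $\Tr_1^m(\delta/(t^2+t))$, cleared of denominators, has too many zeros for its degree when $m\ge4$.
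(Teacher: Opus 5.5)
Your argument is correct, and it takes a genuinely different route from the paper.

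\textbf{The paper's route.} The paper works with the Artin--Schreier curve $Z^2+Z=d(\tfrac1X+\tfrac1{X+c})+\eta$. Only $x=0$ and $x=c$ ramify, so the Hurwitz formula gives genus $1$, and the Hasse--Weil bound gives at least $2^m+1-2^{(m+2)/2}$ rational places. After discarding at most four places at infinity, there are affine points both for $\Tr_1^m(\eta)=0$ and for $\Tr_1^m(\eta)=1$ once $m\ge 4$. So the paper proves the stronger fact that the function is nonconstant, at the cost of function-field machinery and the restriction $m\ge4$.

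\textbf{Your route.} You substitute $x=ct$ and use that $t\mapsto t^2+t$ maps $\mathbb{F}_{2^m}\setminus\{0,1\}$ onto $H\setminus\{0\}$, where $H=\ker\Tr_1^m$. This reduces the claim to: inversion cannot carry the hyperplane $H$ onto the hyperplane $\delta^{-1}H$. You then show that a subspace containing $1$ whose nonzero elements are closed under inversion is a subfield, which is impossible for order $2^{m-1}$. This is elementary and already works for $m\ge 3$. It gives exactly what is stated, and what Lemma~\ref{pro:pi} actually uses. It does not by itself show that the function is not identically $1$.

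\textbf{Completing the characteristic-$2$ step.} The step you flagged does need one more line. In characteristic $2$ the usual route via $(a+b)^2-a^2-b^2=2ab$ gives nothing. Complete it as follows.
\begin{itemize}
\item For $a,b\in A\setminus\{0\}$ with $ab\neq 1$, use Hua's identity in commutative form, $a^2b=a+\bigl(a^{-1}+(b^{-1}+a)^{-1}\bigr)^{-1}$. It involves only sums and inverses of nonzero elements of $A$, so $a^2b\in A$. The cases $a=0$, $b=0$, or $ab=1$ (where $a^2b=a$) are trivial.
\item Taking $b=1$ gives $x^2\in A$ for all $x\in A$. Since squaring is injective and $A$ is finite, $x\mapsto x^2$ permutes $A$.
\item Hence every element of $A$ has the form $a^2$ with $a\in A$, so $A$ is closed under multiplication. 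A finite subring with $1$ of a field is a subfield, and your contradiction for $m\ge 3$ follows.
\end{itemize}

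\textbf{Drop the fallback.} Clearing denominators in $\Tr_1^m\bigl(\delta/(t^2+t)\bigr)$ by $(t^2+t)^{2^{m-1}}$ gives a polynomial of degree exactly $2^m-2$. That equals the number of forced zeros on $\mathbb{F}_{2^m}\setminus\{0,1\}$, so a pure zero count yields no contradiction.
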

	
	\begin{proof} We show that $\Tr_{1}^{m}\!\left(d\left(\tfrac{1}{x}+\tfrac{1}{x+c}\right)\right) $ is not a constant function, i.e.,
		there exist
		\(x_1,x_2\in \mathbb{F}_{2^m}\setminus\{0,c\}\)
		such that $\Tr_{1}^{m}\!\left(d\left(\tfrac{1}{x_1}+\tfrac{1}{x_1+c}\right)\right)=0$ and $\Tr_{1}^{m}\!\left(d\left(\tfrac{1}{x_2}+\tfrac{1}{x_2+c}\right)\right)=1$.
		This will establish the claim.
		
		\medskip
		
		Observe that $\Tr_{1}^{m}\!\left(d\left(\tfrac{1}{x}+\tfrac{1}{x+c}\right)\right) $ equals $0$ (respectively $1$) for some $x \in \mathbb{F}_{2^m}\setminus \{0,c\}$ if and only if the curve $\mathcal{X}_\eta$ defined by
		\[
		\mathcal{X}_\eta:\qquad
		Z^2+Z=d\left(\tfrac{1}{X}+\tfrac{1}{X+c}\right)+\eta
		\]
		has an affine $\mathbb{F}_{2^m}$-rational point for some $\eta \in \mathbb{F}_{2^m}$ with $\Tr_{1}^{m}(\eta)=0$ (respectively $\Tr_{1}^{m}(\eta)=1$).
		
		\medskip
		Let \(F_\eta\) denote the function field of \(\mathcal{X}_\eta\), namely
		\[
		F_\eta=\mathbb{F}_{2^m}(x,z),\qquad
		z^2+z=d\left(\tfrac{1}{x}+\tfrac{1}{x+c}\right)+\eta.
		\]
		Then \(F_\eta\) is an Artin--Schreier extension of
		\(\mathbb{F}_{2^m}(x)\) of degree \(2\); see \cite[Proposition~3.7.8]{sti}.
		The only ramified places of \(\mathbb{F}_{2^m}(x)\) in \(F_\eta\)
		are \((x=0)\) and \((x=c)\) (i.e., the zero of $x$ and $x+c$), each with different exponent \(2\).
		Consequently, \(\mathbb{F}_{2^m}\) is the full constant field of \(F_\eta\).
		By the Hurwitz genus formula (\cite[Theorem 3.4.13]{sti}), we have \(g(F_\eta)=1\).
		Hence, by the Hasse--Weil bound (\cite[Theorem 5.2.3]{sti}), the number \(N(F_\eta)\) of rational places of
		\(F_\eta\) satisfies
		\begin{equation}\label{eq:place}
			N(F_\eta)\geq 2^m+1-2^{(m+2)/2}.
		\end{equation}    
		
		\medskip
		Recall that an Artin--Schreier curve of the form
		\(Z^2+Z=f(X)/g(X)\) has no affine singular points whenever
		\(\gcd(f(X),g(X))=1\).
		In particular, the curve \(\mathcal{X}_\eta\) has no affine singularities.
		Since the highest-degree term in its defining equation is \(Z^2X^2\),
		there are exactly two points at infinity, namely \((0:1:0)\) and \((1:0:0)\).
		These correspond to places lying above \((x=0)\), \((x=c)\), and \((x=\infty)\), which is the pole of $x$.
		Because the places \((x=0)\) and \((x=c)\) are ramified in \(F_\eta\),
		there is a unique rational place lying above each of them.
		Moreover, depending on whether \(\Tr_{1}^{m}(\eta)=0\) or
		\(\Tr_{1}^{m}(\eta)=1\), the place \((x=\infty)\) either splits into two
		rational places or remains a single place of degree \(2\).
		Hence, there are at most four rational places corresponding to points at infinity.
		
		\medskip
		Since \(\mathcal{X}_\eta\) has no affine singular points, each affine
		\(\mathbb{F}_{2^m}\)-rational point corresponds to a unique rational place.
		Therefore, by~\eqref{eq:place}, the number \(N\) of affine rational points of
		\(\mathcal{X}_\eta\) satisfies
		\[
		N \geq 2^m-2^{(m+2)/2}-3.
		\]
		In particular, for \(m\geq 4\), there exist affine
		\(\mathbb{F}_{2^m}\)-rational points on \(\mathcal{X}_\eta\) for both
		\(\Tr_{1}^{m}(\eta)=0\) and \(\Tr_{1}^{m}(\eta)=1\),
		which yields the desired conclusion.
	\end{proof}
	
	\begin{lemma}\label{pro:pi}
		Let \(m \ge 4\), and let \(\pi\) be the permutation of \(\mathbb{F}_{2^m}\)
		defined by \(\pi(x) = x^{2^m-2}\). Then $\pi$ satisfies \textbf{Property~(P)}
		of Corollary~\ref{cor:char}. That is, $\pi(x)$
		satisfies Equations~\eqref{eq:pi1}
		and~\eqref{eq:pi2} for all $x\in\mathbb{F}_{2^m}$ if and only if one of the
		following conditions holds:
		\[
		(a_1,a_2)=(0,0), \quad (b_1,b_2)=(0,0), \quad (a_1,a_2)=(b_1,b_2),
		\quad \text{or} \quad a_2=b_2=0.
		\]
	\end{lemma}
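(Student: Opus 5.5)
The "if" direction is a direct check. If $(a_1,a_2)=(0,0)$, $(b_1,b_2)=(0,0)$ or $(a_1,a_2)=(b_1,b_2)$, the four terms in \eqref{eq:pi1} cancel in pairs. The sum in \eqref{eq:pi2} then becomes $\Tr_1^m$ of $(b_1+b_1)\pi(x+b_2)$ or of a similar vanishing expression. If $a_2=b_2=0$, then \eqref{eq:pi1} is trivially $0$. The argument of the trace in \eqref{eq:pi2} becomes $(a_1+b_1+a_1+b_1)\pi(x)=0$.

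For the converse we assume \eqref{eq:pi1} and \eqref{eq:pi2} hold for all $x$, and that none of the four conditions holds. We split according to $a_2,b_2$.

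\emph{Case A: $a_2,b_2$ distinct and nonzero.} Then $D_{a_2}D_{b_2}\pi\equiv 0$ is required. With $\pi(x)=x^{-1}$ (and $0^{-1}=0$), evaluating at a generic $x\notin\{0,a_2,b_2,a_2+b_2\}$ gives
\[
\frac1x+\frac1{x+a_2}+\frac1{x+b_2}+\frac1{x+a_2+b_2}=\frac{a_2b_2(a_2+b_2)}{x(x+a_2)(x+b_2)(x+a_2+b_2)}\neq 0 .
\]
Such an $x$ exists because $m\ge4$, so $2^m>4$. This is the standard fact that inversion has property (P1).

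\emph{Case B: $a_2=b_2=c\neq0$.} Then \eqref{eq:pi1} holds automatically. Condition \eqref{eq:pi2} reads $\Tr_1^m\bigl((a_1+b_1)(\pi(x)+\pi(x+c))\bigr)=0$ for all $x$. Since $(a_1,a_2)\ne(b_1,b_2)$, we have $d:=a_1+b_1\ne0$. Lemma~\ref{lem:trace} then says this trace does not vanish identically on $\mathbb{F}_{2^m}\setminus\{0,c\}$, a contradiction.

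\emph{Case C: exactly one of $a_2,b_2$ is zero.} By symmetry take $a_2=0\neq b_2=c$. Then \eqref{eq:pi1} holds trivially. Condition \eqref{eq:pi2} becomes
\[
\Tr_1^m\bigl(a_1\pi(x)+b_1\pi(x+c)+(a_1+b_1)\pi(x+c)\bigr)=\Tr_1^m\bigl(a_1(\pi(x)+\pi(x+c))\bigr)=0 .
\]
If $a_1\neq0$, Lemma~\ref{lem:trace} with $d=a_1$ gives the contradiction. If $a_1=0$, then $(a_1,a_2)=(0,0)$, which is excluded.

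The only real work is Case B/C, and it is handled by Lemma~\ref{lem:trace}. The one step needing care is the algebra in Case A: it must be checked that the rational-function identity is valid on at least one admissible $x$. This is guaranteed since $m\ge4$ leaves points outside the excluded coset $\{0,a_2,b_2,a_2+b_2\}$.
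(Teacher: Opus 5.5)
Your proposal is correct and follows the paper's proof essentially step by step. The identity $\frac{a_2b_2(a_2+b_2)}{x(x+a_2)(x+b_2)(x+a_2+b_2)}$ forces $a_2=0$, $b_2=0$ or $a_2=b_2$, and Lemma~\ref{lem:trace} then handles both the case $a_2=b_2\neq 0$ and the case where exactly one of $a_2,b_2$ is zero. For the latter case, your explicit reduction of \eqref{eq:pi2} to $\Tr_1^m\bigl(a_1(\pi(x)+\pi(x+c))\bigr)$ spells out the computation the paper only calls analogous.
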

	
	\begin{proof}
		The sufficiency of the conditions
		$(a_1,a_2)=(0,0)$, $(b_1,b_2)=(0,0)$, $(a_1,a_2)=(b_1,b_2)$, or $a_2=b_2=0$
		is straightforward. In each case, $\pi(x)$ satisfies
		Equations~\eqref{eq:pi1} and~\eqref{eq:pi2} for all $x\in\mathbb{F}_{2^m}$.
		Therefore, it remains to establish necessity.
		
		\medskip
		For any nonzero \(x\in \mathbb{F}_{2^m}\), we can write
		\(\pi(x)=1/x\). Hence, for
		\(x\in \mathbb{F}_{2^m}\setminus\{0,a_2,b_2,a_2+b_2\}\),
		Equation~\eqref{eq:pi1} can therefore be written as
		\begin{align*}
			&\pi(x)+\pi(x+a_2)+\pi(x+b_2)+\pi(x+a_2+b_2) \\[4pt]
			&= \frac{1}{x}+\frac{1}{x+a_2}+\frac{1}{x+b_2}+\frac{1}{x+a_2+b_2} \\[6pt]
			&= \frac{a_2b_2(a_2+b_2)}
			{x(x+a_2)(x+b_2)(x+a_2+b_2)}.
		\end{align*}
		Consequently, if Equation~\eqref{eq:pi1} holds for all $x\in \mathbb{F}_{2^m}$,
		then necessarily \(a_2=0\) or \(b_2=0\), or \(a_2=b_2\).
		
		\medskip
		We next consider the case \(a_2=b_2=c\) for some nonzero
		\(c\in \mathbb{F}_{2^m}\).
		For \(x\in \mathbb{F}_{2^m}\setminus\{0,c\}\),
		Equation~\eqref{eq:pi2} becomes
		\begin{align}\label{eq:a1b1}
			&\Tr_{1}^{m}\!\Bigl(
			a_{1}\,\pi(x+a_{2})
			+ b_{1}\,\pi(x+b_{2})
			+ (a_{1}+b_{1})\,\pi(x+a_{2}+b_{2})
			\Bigr) \nonumber \\[4pt]
			&= \Tr_{1}^{m}\!\Bigl(
			(a_1+b_1)\bigl(\pi(x)+\pi(x+c)\bigr)
			\Bigr) \nonumber \\[6pt]
			&= \Tr_{1}^{m}\!\Bigl(
			(a_1+b_1)\Bigl(\tfrac{1}{x}+\tfrac{1}{x+c}\Bigr)
			\Bigr).
		\end{align}
		By Lemma~\ref{lem:trace}, the above trace vanishes on
		\(\mathbb{F}_{2^m}\setminus\{0,c\}\) only if \(a_1=b_1\), i.e., $(a_1,a_2)=(b_1,b_2)$.
		
		\medskip
		Finally, suppose that exactly one of \(a_2\) or \(b_2\) is zero.
		A computation analogous to~\eqref{eq:a1b1} shows that 
		\eqref{eq:pi2} cannot hold for all \(x\in\mathbb{F}_{2^m}\) 
		unless, respectively, \(a_1=0\) or \(b_1=0\).
		
		\medskip
		This completes the proof.   
	\end{proof}
	
	As a direct consequence of Theorem~\ref{thm:ex} and Lemma~\ref{pro:pi},
	we obtain a class of functions that belong to the \(GMM\) class but not to the \(MM^{\#}\) class.
	
	\begin{corollary}\label{cor:ex1}
		Let \(m,k\) be positive integers with \(m\geq 4\) and \(m > k+2\).
		For \(z\in \mathbb{F}_{2^k}\), define the Boolean function
		\(f^{(z)}:\mathbb{F}_{2^m}\times\mathbb{F}_{2^m}\to \mathbb{F}_{2}\) by
		\[
		f^{(z)}(x_1,x_2)=
		\begin{cases}
		\Tr_1^m\!\bigl(x_1\,x_2^{2^m-2}\bigr), & \text{if } \Tr_1^k(z)=0, \\[6pt]
		\Tr_1^m\!\bigl(x_2\,x_1^{2^m-2}\bigr), & \text{if } \Tr_1^k(z)=1.
		\end{cases}
		\]
		Then the function
		\[
		f:\mathbb{F}_{2^m}\times\mathbb{F}_{2^m}\times
		\mathbb{F}_{2^k}\times\mathbb{F}_{2^k}\to \mathbb{F}_{2},
		\qquad
		f(x_1,x_2,y,z)=f^{(z)}(x_1,x_2)+\Tr_1^k(yz),
		\]
		is a \(GMM\) function which is not in the \(MM^{\#}\) class.
	\end{corollary}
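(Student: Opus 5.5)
This is a direct combination of Lemma~\ref{pro:pi} and Theorem~\ref{thm:ex}, so the plan is a short verification rather than a new argument.

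\emph{Step 1 (choice of permutation).} Take \(\pi(x)=x^{2^m-2}\) on \(\mathbb{F}_{2^m}\). This is a permutation: it is the inversion map extended by \(0\mapsto 0\), since \(\gcd(2^m-2,\,2^m-1)=1\). With this \(\pi\), the two cases in the definition of \(f^{(z)}\) in the statement become exactly \(\Tr_1^m\bigl(x_1\pi(x_2)\bigr)\) and \(\Tr_1^m\bigl(x_2\pi(x_1)\bigr)\). These are the two cases used in Theorem~\ref{thm:ex}.

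\emph{Step 2 (Property~(P)).} The hypothesis \(m\ge 4\) is precisely the hypothesis of Lemma~\ref{pro:pi}. That lemma shows \(\pi\) satisfies \textbf{Property~(P)} of Corollary~\ref{cor:char}. It is the only place where the condition \(m\ge4\) enters, through the Hasse--Weil estimate of Lemma~\ref{lem:trace}.

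\emph{Step 3 (apply Theorem~\ref{thm:ex}).} Since \(m>k+2\), all hypotheses of Theorem~\ref{thm:ex} hold. The theorem yields that \(f(x_1,x_2,y,z)=f^{(z)}(x_1,x_2)+\Tr_1^k(yz)\) is not in \(MM^{\#}\).

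\emph{Step 4 (the \(GMM\) property).} It remains to confirm that \(f\) is a \(GMM\) function, in particular that it is bent and of the form~\eqref{eq:genMM}. For each \(z\), the function \(f^{(z)}\) is an \(MM\) function with a permutation, hence bent. Here \(\Tr_1^m(x_2\pi(x_1))\) is an \(MM\) function with the roles of the variables swapped. So \(f\) has the form~\eqref{GMM} with bent components and is therefore a \(GMM\) bent function.

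There is no real obstacle. The only point needing care is matching hypotheses: \(n=2m\), so the condition \(n>2k+4\) of Proposition~\ref{thm:main} is equivalent to \(m>k+2\), and \(m\ge4\) is what Lemma~\ref{pro:pi} needs.
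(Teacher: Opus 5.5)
Your proposal is correct and matches the paper's argument: the paper obtains this corollary directly by combining Lemma~\ref{pro:pi} with Theorem~\ref{thm:ex}. Lemma~\ref{pro:pi} gives Property~(P) for $\pi(x)=x^{2^m-2}$ when $m\ge 4$, and Theorem~\ref{thm:ex} then applies because $m>k+2$. Your Step~4 is harmless but redundant, since Theorem~\ref{thm:ex} already states that $f$ is a $GMM$ function.
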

	
	Similarly, we consider the Gold functions $\pi(x)=x^{2^k+1}$ introduced in \cite{gold} over the finite field $\mathbb{F}_{2^m}$. 
	It is well known that $\pi$ is a permutation of $\mathbb{F}_{2^m}$ if and only if $\gcd(2^k+1,\,2^m-1)=1$.
	Using the identity $\gcd(2^k+1,2^k-1)=1$, we obtain
	\begin{align*}
		\gcd(2^k+1,2^m-1)
		= \frac{\gcd(2^{2k}-1,2^m-1)}{\gcd(2^k-1,2^m-1)} = \frac{2^{\gcd(2k,m)}-1}{2^{\gcd(k,m)}-1}.
	\end{align*}
	Hence, $\gcd(2^k+1,2^m-1)=1$ if and only if $\gcd(k,m)=1$ and $m$ is odd. 
	
	\begin{lemma} \label{lem:gold}
		Let $m$ and $k$ be positive integers such that $\gcd(k,m)=1$ and $m$ is odd.
		Then $\pi(x)=x^{2^k+1}$ satisfies \textbf{Property~(P)}
		of Corollary~\ref{cor:char}. That is, $\pi(x)$
		satisfies Equations~\eqref{eq:pi1}
		and~\eqref{eq:pi2} for all $x\in\mathbb{F}_{2^m}$ if and only if one of the
		following conditions holds:
		\[
		(a_1,a_2)=(0,0), \quad (b_1,b_2)=(0,0), \quad (a_1,a_2)=(b_1,b_2),
		\quad \text{or} \quad a_2=b_2=0.
		\]
	\end{lemma}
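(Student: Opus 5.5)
Sufficiency of the four listed conditions is immediate, as in Lemma~\ref{pro:pi}. For necessity, I plan to use that $\pi(x)=x^{2^k+1}$ is quadratic, so its second derivatives are constants. Expanding $\pi(x+a)=x^{2^k+1}+a x^{2^k}+a^{2^k}x+a^{2^k+1}$ gives
\[
\pi(x)+\pi(x+a_2)+\pi(x+b_2)+\pi(x+a_2+b_2)=a_2b_2^{2^k}+a_2^{2^k}b_2 .
\]
So \eqref{eq:pi1} holds iff $a_2b_2^{2^k}=a_2^{2^k}b_2$. If $a_2b_2\neq0$, this means $(b_2/a_2)^{2^k-1}=1$. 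Since $\gcd(2^k-1,2^m-1)=2^{\gcd(k,m)}-1=1$, this forces $b_2=a_2$. Hence \eqref{eq:pi1} forces $a_2=0$, $b_2=0$, or $a_2=b_2$, exactly the same trichotomy as in Lemma~\ref{pro:pi}.

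Next I treat each case using \eqref{eq:pi2}, which reduces to the trace of an affine-plus-quadratic polynomial in $x$. A Boolean function $\Tr_1^m(Q(x))$ with $Q$ quadratic vanishes identically iff, after reducing exponents modulo $x^{2^m}-x$ and using $\Tr(\alpha x^{2^j})=\Tr(\alpha^{2^{-j}}x)$, the coefficient of the linear form vanishes and the constant term has trace $0$.

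\emph{Case $a_2=b_2=c\neq0$.} As in \eqref{eq:a1b1}, the left side of \eqref{eq:pi2} becomes $\Tr_1^m\bigl((a_1+b_1)(\pi(x)+\pi(x+c))\bigr)$. Here $\pi(x)+\pi(x+c)=c x^{2^k}+c^{2^k}x+c^{2^k+1}$, which is affine in $x$. Writing $d=a_1+b_1$, the linear part is $\Tr_1^m\bigl(((dc)^{2^{-k}}+dc^{2^k})x\bigr)$. Vanishing for all $x$ requires $(dc)^{2^{-k}}=dc^{2^k}$. Raising to the $2^k$ power gives $dc=d^{2^k}c^{2^{2k}}$. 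If $d\neq0$, this says $(dc^{2^k+1})^{2^k-1}=1$, so $d=c^{-(2^k+1)}$ by the same gcd argument. I then check the constant term: $\Tr_1^m(dc^{2^k+1})=\Tr_1^m(1)=1$ because $m$ is odd. So the function is the constant $1$, not $0$, a contradiction. Hence $d=0$, i.e.\ $(a_1,a_2)=(b_1,b_2)$.

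\emph{Case exactly one of $a_2,b_2$ is zero,} say $b_2=0\neq a_2$. Then \eqref{eq:pi2} becomes
\[
\Tr_1^m\bigl(a_1\pi(x+a_2)+b_1\pi(x)+(a_1+b_1)\pi(x+a_2)\bigr)=\Tr_1^m\bigl(b_1(\pi(x)+\pi(x+a_2))\bigr).
\]
This is the previous computation with $d=b_1$ and $c=a_2$, so it forces $b_1=0$, i.e.\ $(b_1,b_2)=(0,0)$. The case $a_2=0\neq b_2$ is symmetric and gives $(a_1,a_2)=(0,0)$.

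The only delicate step is the linearization argument in the case $a_2=b_2$. The linear-coefficient condition alone does not force $d=0$; it leaves the one candidate $d=c^{-(2^k+1)}$. That candidate must be excluded by the constant term via $\Tr_1^m(1)=1$, and this is where the hypothesis that $m$ is odd is genuinely used, beyond guaranteeing that $\pi$ is a permutation.
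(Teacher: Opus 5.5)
Your proof is correct and is essentially the paper's argument. In both, the second derivative of $\pi$ is the constant $a_2b_2^{2^k}+a_2^{2^k}b_2$, which gives the trichotomy via $\gcd(2^k-1,2^m-1)=1$. Then \eqref{eq:pi2} is affine in $x$ (the $x^{2^k+1}$ terms cancel), so vanishing of its linear coefficient leaves only the candidate $d\,c^{2^k+1}=1$, and this is excluded because $\Tr_1^m(1)=1$ for odd $m$. The differences are cosmetic: you collect coefficients on $x$ rather than on $x^{2^k}$, and you reduce the case of exactly one zero among $a_2,b_2$ to the computation of $\Tr_1^m\bigl(d(\pi(x)+\pi(x+c))\bigr)$, whereas the paper treats that case separately via \eqref{eq:zero}.
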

	
	\begin{proof}
		Since the sufficiency of the stated conditions is immediate, we only prove necessity.
		
		A direct computation shows that
		\[
		\pi(x)+\pi(x+a_2)+\pi(x+b_2)+\pi(x+a_2+b_2)
		= a_2 b_2 \bigl(a_2^{2^k-1}+b_2^{2^k-1}\bigr).
		\]
		Since $\gcd(2^k-1,2^m-1)=1$, Equation~\eqref{eq:pi1} holds for all
		$x\in\mathbb{F}_{2^m}$ if and only if \(a_2=0\) or \(b_2=0\), or \(a_2=b_2\).
		
		Next, a straightforward computation yields
		\begin{align}\label{tr:gold}
			&\Tr_{1}^{m}\!\Bigl(
			a_{1}\,\pi(x+a_{2})
			+ b_{1}\,\pi(x+b_{2})
			+ (a_{1}+b_{1})\,\pi(x+a_{2}+b_{2})
			\Bigr) \nonumber \\[4pt]
			&\quad = \Tr_{1}^{m}\!\Bigl(
			a_1 a_2^{2^k+1}
			+ b_1 b_2^{2^k+1}
			+ (a_1+b_1)(a_2+b_2)^{2^k+1} 
			+ (a_1 b_2 + b_1 a_2)\,x^{2^k}
			+ (a_1 b_2^{2^k} + b_1 a_2^{2^k})\,x
			\Bigr)   \nonumber  \\[4pt]
			&\quad = \Tr_{1}^{m}\!\Bigl(
			a_1 a_2^{2^k+1}
			+ b_1 b_2^{2^k+1}
			+ (a_1+b_1)(a_2+b_2)^{2^k+1} 
			+ \bigl(
			(a_1 b_2 + b_1 a_2)
			+ (a_1 b_2^{2^k} + b_1 a_2^{2^k})^{2^k}
			\bigr)x^{2^k}
			\Bigr).
		\end{align}
		Therefore, Equation~\eqref{eq:pi2} holds for all $x\in\mathbb{F}_{2^m}$ only if
		the coefficient of $x^{2^k}$ vanishes, namely,
		\begin{equation}\label{eq:zero}
			(a_1 b_2 + b_1 a_2)
			+ (a_1 b_2^{2^k} + b_1 a_2^{2^k})^{2^k} = 0.
		\end{equation}
		
		We now analyze the possible cases.
		
		\medskip
		\noindent\textbf{Case (i):} $a_2=0$.
		If $b_2=0$ or $a_1=0$, then Equation~\eqref{tr:gold} vanishes trivially.
		Assume $b_2 a_1\neq 0$. Then \eqref{eq:zero} reduces to $a_1 b_2 + a_1^{2^k} b_2^{2^{2k}}=0$,
		which is equivalent to $a_1^{2^k-1} = b_2^{-(2^{2k}-1)}$.
		Since $\gcd(2^k-1,2^m-1)=1$, this holds if and only if
		$a_1=b_2^{-(2^k+1)}$. Since $m$ is odd, substituting into \eqref{tr:gold}, we obtain
		\[
		\Tr_{1}^{m}\bigl(a_1 b_2^{2^k+1}\bigr)=\Tr_{1}^{m}(1)=1,
		\]
		a contradiction. Hence, either $a_2=b_2=0$ or $(a_1,a_2)=(0,0)$.
		
		\medskip
		\noindent\textbf{Case (ii):} $b_2=0$.
		This case is symmetric to Case~(i) and implies either $a_2=b_2=0$ or $(b_1,b_2)=(0,0)$.
		
		\medskip
		\noindent\textbf{Case (iii):} $a_2=b_2=c$ for some nonzero $c\in\mathbb{F}_{2^m}$.
		Assume that $a_1\neq b_1$. Then \eqref{eq:zero} becomes
		$c(a_1+b_1) + c^{2^{2k}}(a_1+b_1)^{2^k}=0$,
		which is equivalent to
		$\bigl(c^{2^k+1}(a_1+b_1)\bigr)^{2^k-1}=1$.
		Since $\gcd(2^k-1,2^m-1)=1$, this holds if and only if
		$c^{2^k+1}(a_1+b_1)=1$. Substituting into \eqref{tr:gold}, we obtain
		\[
		\Tr_{1}^{m}\bigl(c^{2^k+1}(a_1+b_1)\bigr)
		= \Tr_{1}^{m}(1)=1,
		\]
		a contradiction. Therefore, $a_1=b_1$, and hence $(a_1,a_2)=(b_1,b_2)$.
		
		\medskip
		Combining all cases completes the proof.
	\end{proof}
	
	Similarly, Theorem~\ref{thm:ex} together with Lemma~\ref{lem:gold} yields another
	class of functions that belong to the \(GMM\) class but not to the completed
	\(MM\) class.

	\begin{corollary}\label{cor:ex2}
		Let $m$ and $k$ be positive integers such that $\gcd(k,m)=1$ and $m$ is odd with \(m > k+2\).
		For $z\in\mathbb{F}_{2^k}$, define the Boolean function
		$f^{(z)}:\mathbb{F}_{2^m}\times\mathbb{F}_{2^m}\to\mathbb{F}_{2}$ by
		\[
		f^{(z)}(x_1,x_2)=
		\begin{cases}
		\Tr_1^m\!\bigl(x_1\,x_2^{2^k+1}\bigr), & \text{if } \Tr_1^k(z)=0, \\[6pt]
		\Tr_1^m\!\bigl(x_2\,x_1^{2^k+1}\bigr), & \text{if } \Tr_1^k(z)=1.
		\end{cases}
		\]
		Then the function
		\[
		f:\mathbb{F}_{2^m}\times\mathbb{F}_{2^m}\times
		\mathbb{F}_{2^k}\times\mathbb{F}_{2^k}\to\mathbb{F}_{2},
		\qquad
		f(x_1,x_2,y,z)=f^{(z)}(x_1,x_2)+\Tr_1^k(yz),
		\]
		is a \(GMM\) function that is not in the \(MM^{\#}\) class.
	\end{corollary}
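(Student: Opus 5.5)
This follows by combining Theorem~\ref{thm:ex} with Lemma~\ref{lem:gold}, mirroring the way Corollary~\ref{cor:ex1} follows from Lemma~\ref{pro:pi}.

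Set $\pi(x)=x^{2^k+1}$ on $\mathbb{F}_{2^m}$. First, $\pi$ is a permutation of $\mathbb{F}_{2^m}$. By the gcd computation preceding Lemma~\ref{lem:gold}, $\gcd(2^k+1,2^m-1)=1$ exactly when $\gcd(k,m)=1$ and $m$ is odd, and both conditions are among the hypotheses. Hence each $f^{(z)}$ is a Maiorana--McFarland bent function: for $\Tr_1^k(z)=0$ it is $h$, and for $\Tr_1^k(z)=1$ it is $h\circ L$ with $L(x_1,x_2)=(x_2,x_1)$. Consequently $f$ is of the form \eqref{GMM}, so it is a $GMM$ (bent) function.

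Second, Lemma~\ref{lem:gold} states that, under the same hypotheses, $\pi$ satisfies Property~(P) of Corollary~\ref{cor:char}. By Corollary~\ref{cor:char}, $\mathcal W=\mathbb{F}_{2^m}\times\{0\}$ is then the unique $m$-dimensional $\mathcal M$-subspace of $h(x_1,x_2)=\Tr_1^m(x_1\pi(x_2))$, and it contains every nontrivial $\mathcal M$-subspace of $h$.

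Finally, with $m>k+2$, Theorem~\ref{thm:ex} applies to this $\pi$. Here $n=2m$, and $n>2k+4$ is equivalent to $m>k+2$, which is the hypothesis needed in Proposition~\ref{thm:main}. The theorem gives that every $\mathcal M$-subspace of $f$ has dimension less than $m+k$, which is half the dimension $2m+2k$ of the domain of $f$. By Dillon's criterion (Lemma~\ref{lem:second-order}), $f\notin MM^{\#}$.

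The only point to check is that the form of $f^{(z)}$ in the statement matches Theorem~\ref{thm:ex} with $\pi(x)=x^{2^k+1}$ substituted. It does, since $\Tr_1^m(x_2\,\pi(x_1))=\Tr_1^m(x_2x_1^{2^k+1})$. The substantive work is contained in Lemma~\ref{lem:gold} and Proposition~\ref{thm:main}.

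Note that $k$ plays two roles in the statement: it is the Gold exponent parameter and also the dimension of the auxiliary field $\mathbb{F}_{2^k}$. These coincide, and nothing in the cited results conflicts with using the same $k$ for both.
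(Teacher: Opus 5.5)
Your proposal is correct and is the paper's own argument: Lemma~\ref{lem:gold} gives Property~(P) for $\pi(x)=x^{2^k+1}$, and Theorem~\ref{thm:ex} then shows that $f$ is a $GMM$ function outside $MM^{\#}$, since that theorem is Proposition~\ref{thm:main} with $n=2m$, where $n>2k+4$ is the same as $m>k+2$. One small point: you only need the implication ``$m$ odd $\Rightarrow \gcd(2^k+1,2^m-1)=1$'' for $\pi$ to be a permutation, so do not rely on the ``exactly when'' claim, whose converse fails in general (e.g.\ $k=2$, $m=6$ gives $\gcd(5,63)=1$).
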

	
	\begin{remark}
		We recall that any Boolean bent function on $\V_n^{(2)}$ belonging to the completed $\mathcal{PS}_{ap}$ class has algebraic degree $n/2$. Consequently, the bent functions in Corollaries~\ref{cor:ex1} and~\ref{cor:ex2} lie neither in the \(MM^{\#}\) class nor in the $\mathcal{PS}_{ap}^{\#}$ class.
	\end{remark}

	\section{Decomposition of the generalized $\mathcal{PS}_{ap}$ functions } \label{sec:decom}
	
	Let $\mathcal{S} \subseteq \V_n^{(2)}$ be a subspace of dimension $n-2$, and
	let $\mathcal{W} \subseteq \V_n^{(2)}$ be a complementary subspace such that $\V_n^{(2)} = \mathcal{S} \oplus \mathcal{W}$.
	Let $f \colon \V_n^{(2)} \to \mathbb{F}_2$ be a Boolean function.
	For each $w_i \in \mathcal{W}$, define $f_i(x) = f(x + w_i)$ for $x \in \mathcal{S}$.
	Equivalently, $f_i$ is the restriction of $f$ to the coset
	$w_i + \mathcal{S}$, viewed as a Boolean function on $\mathcal{S}$.
	
	The $4$-decomposition of $f$ with respect to $\mathcal{S}$ is then defined as the
	sequence $(f_1,f_2,f_3,f_4)$,
	where $\{w_1,w_2,w_3,w_4\} = \mathcal{W}$. Throughout the paper, we simply refer to this as a decomposition (with
	respect to $\mathcal{S}$).
	If all the functions $f_i$ are bent (respectively, semibent), then $f$
	is said to admit a bent (respectively, semibent) decomposition with
	respect to $\mathcal{S}$. 
	
	We remark that, in the decomposition of a function $f$, the
	functions $f_i$ are pairwise EA-equivalent and hence have
	the same extended Walsh spectrum. In particular, they are bent or semibent
	simultaneously. Therefore, it suffices to consider the restriction of
	$f$ to the subspace $\mathcal{S}$.
	Moreover, since $\mathcal S$ has codimension $2$, there exist linearly independent vectors 
	$u,v \in \V_n^{(2)}$ such that $\mathcal S = \langle u,v\rangle^\perp$.
	In other words,
	\[
	\mathcal S
	=
	\{\, x \in \V_n^{(2)} : \langle u,x\rangle_n = \langle v,x\rangle_n = 0 \,\}.
	\]
	An equivalent criterion for a bent function $f$ to admit a bent
	(respectively, semibent) decomposition, expressed in terms of the
	second-order derivative of its dual function, is given in
	\cite[Theorem~7]{CaCha} as follows.
	
	\begin{lemma}
		\label{lem:decom}
		Let $f \colon \V_n^{(2)} \to \F_2$ be a bent function with dual $f^*$.
		Let $u,v \in \V_n^{(2)}$ be linearly independent, and let $\mathcal{S} = \langle u,v \rangle^\perp$
		be the orthogonal complement of $\langle u,v \rangle$.
		Then the following hold:
		\begin{itemize}
			\item[(i)] $f$ admits a bent decomposition with respect to $\mathcal{S}$
			if and only if $D_u D_v f^* = 1$.
			\item[(ii)] $f$ admits a semibent decomposition with respect to $\mathcal{S}$
			if and only if $D_u D_v f^* = 0$.
		\end{itemize}
	\end{lemma}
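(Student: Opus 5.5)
The plan is to compute the Walsh transform of the restriction $f|_{\mathcal S}$ and express it through the dual $f^*$. Bentness of a function on the $(n-2)$-dimensional space $\mathcal S$ means all its Walsh values have modulus $2^{(n-2)/2}$. Semibentness means they lie in $\{0,\pm 2^{n/2}\}$.

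The key tool is Poisson summation applied to the indicator of $\mathcal S=\langle u,v\rangle^\perp$. We have $1_{\mathcal S}(x)=\tfrac14\sum_{t\in\langle u,v\rangle}(-1)^{\langle t,x\rangle_n}$. Hence, for $b\in\V_n^{(2)}$,
\[
\sum_{x\in\mathcal S}(-1)^{f(x)+\langle b,x\rangle_n}
=\frac14\sum_{t\in\{0,u,v,u+v\}}W_f(b+t)
=2^{n/2-2}\sum_{t\in\{0,u,v,u+v\}}(-1)^{f^*(b+t)}.
\]
Here we use $W_f(c)=2^{n/2}(-1)^{f^*(c)}$. The Walsh spectrum of $f|_{\mathcal S}$ is indexed by $\V_n^{(2)}/\mathcal S^\perp$, and every such class is represented by some $b$. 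So the spectrum of $f|_{\mathcal S}$ is exactly the multiset of values of $S(b)=\sum_{t}(-1)^{f^*(b+t)}$, scaled by $2^{n/2-2}$.

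Next we analyse $S(b)$, a sum of four signs. Write $\epsilon_t=(-1)^{f^*(b+t)}$. Their product is
\[
\epsilon_0\epsilon_u\epsilon_v\epsilon_{u+v}=(-1)^{D_uD_vf^*(b)}.
\]
If the product is $-1$, then an odd number of the four signs equal $-1$, so $S(b)=\pm2$. The Walsh value is then $\pm 2^{n/2-1}=\pm2^{(n-2)/2}$, which is the bent value on $\mathcal S$. If the product is $+1$, then an even number are $-1$, so $S(b)\in\{0,\pm4\}$. The Walsh value then lies in $\{0,\pm2^{n/2}\}=\{0,\pm2^{((n-2)+2)/2}\}$, which are exactly the semibent values.

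It remains to prove both implications. If $D_uD_vf^*\equiv1$, every Walsh value of $f|_{\mathcal S}$ has modulus $2^{(n-2)/2}$, so $f|_{\mathcal S}$ is bent. Conversely, if $f|_{\mathcal S}$ is bent, then $S(b)=\pm2$ for every $b$, which forces the sign product to be $-1$, i.e.\ $D_uD_vf^*(b)=1$ for all $b$. The semibent case is the same: $D_uD_vf^*\equiv0$ holds iff every $S(b)\in\{0,\pm4\}$, iff $f|_{\mathcal S}$ is plateaued with amplitude $2^{n/2}$, i.e.\ semibent.

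Finally, to pass from the restriction to the decomposition, we use the remark preceding the lemma. The other cosets $w_i+\mathcal S$ give EA-equivalent functions; equivalently, shifting $f$ changes $f^*$ only by a linear term, which leaves $D_uD_vf^*$ unchanged. So it suffices to treat $f|_{\mathcal S}$.

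The main obstacle is the bookkeeping in the Poisson summation step: making sure that the index set of the Walsh transform on $\mathcal S$ corresponds correctly to cosets of $\langle u,v\rangle$, and that the normalisation constant $2^{n/2-2}$ is right. The parity argument is then immediate.
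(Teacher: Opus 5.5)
Your argument is correct and is essentially the standard proof of Canteaut--Charpin \cite{CaCha}, which the paper cites rather than reproves. As there, Poisson summation over $\langle u,v\rangle$ writes each Walsh value of the restriction as $2^{n/2-2}$ times a sum of four signs of $f^*$ whose product is $(-1)^{D_uD_vf^*(b)}$, and the parity count finishes it. One small correction: drop the claim that the restrictions to different cosets are EA-equivalent. That is false in general: $g\,\|\,g\,\|\,h\,\|\,(h+1)$ is bent for bent $g,h$ of different algebraic degrees. Rely instead on your shift argument $\bigl(f(\cdot+w)\bigr)^*=f^*+\langle\cdot,w\rangle_n$, which is valid and suffices.
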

	
	From Lemma~\ref{lem:decom}, it follows that $f$ admits a semibent
	decomposition if and only if its dual $f^*$ possesses a nontrivial
	$\mathcal{M}$-subspace; equivalently, its linearity index satisfies
	$\mathrm{ind}(f^*) > 1$. In particular, any $MM$ or $GMM$ bent function
	admits a semibent decomposition. In contrast to $MM$ and $GMM$ bent
	functions, it appears that the majority of the $\mathcal{PS}_{ap}$ bent
	functions admit neither a bent nor a semibent decomposition; for
	details, we refer to \cite{akkmpp26}.
	
	In this section, we investigate the decomposition of bent functions in the
	generalized $\mathcal{PS}_{ap}$ class. 
	As in~\cite{akkmpp26}, our approach relies on curves defined over finite fields 
	and on estimates for their number of affine rational points. 
	However, due to the definition of generalized $\mathcal{PS}_{ap}$ functions, 
	the analysis requires more involved technical computations.
	
	Let \(m\), \(k\), and \(e\) be integers such that \(k \mid m\),
	\(e \equiv 2^{\ell} \mod{(2^{k}-1)}\), and
	\(\gcd(2^{m}-1, e) = 1\).
	Let \(\eta\) denote the multiplicative inverse of \(e\) modulo \((2^{m}-1)\), that
	is, $\eta e \equiv 1 \mod{(2^{m}-1)}$.
	In particular, we consider the generalized $\mathcal{PS}_{ap}$ bent functions defined by
	\begin{equation}\label{eq:gPSap}
		g(x,y)=
		P\!\left(\Tr_{k}^{m}\!\left(x\, y^{-\eta}\right)\right),
	\end{equation}
	where $P \colon \mathbb{F}_{2^{k}} \to \mathbb{F}_2$ is a balanced Boolean
	function.
	
	To apply Lemma~\ref{lem:decom}, we first determine the dual of the function \(g\)
	defined in \eqref{eq:gPSap}.
	
	\begin{lemma}
		Let \(g \colon \mathbb{F}_{2^{m}} \times \mathbb{F}_{2^{m}} \to \mathbb{F}_2\)
		be the generalized \(\mathcal{PS}_{ap}\) bent function defined in
		\eqref{eq:gPSap}. Then the dual bent function \(g^{\ast}\) is given by
		\[
		g^{\ast}(x,y)
		=
		P\!\left(\Tr^{m}_{k}\!\left(y\, x^{-e}\right)^{2^{m-\ell}}\right)
		=
		P\!\left(\Tr^{m}_{k}\!\left(\tilde y\, \tilde x^{-e}\right)\right),
		\]
		where $\tilde y=y^{2^{m-\ell}}$ and $\tilde x=x^{2^{m-\ell}}$.
	\end{lemma}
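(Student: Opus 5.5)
The plan is to compute the Walsh transform $W_g(u,v)$ directly, using the partition $\Omega_2$ from \eqref{eq:Gamma} on which $g$ is constant. On $V=\{(x,0)\}$ we have $g=P(0)$, with the usual convention $0^{-\eta}=0$. Every $(x,y)$ with $y\neq 0$ can be written uniquely as $(s\,t^{\eta},t)$ with $t\in\mathbb{F}_{2^m}^{*}$ and $s\in\mathbb{F}_{2^m}$; on this point $x y^{-\eta}=s$, so $g=P(\Tr_k^m(s))$. This gives
\[
W_g(u,v)=(-1)^{P(0)}\sum_{x}(-1)^{\Tr_1^m(ux)}+\sum_{t\neq 0}(-1)^{\Tr_1^m(vt)}\sum_{s\in\mathbb{F}_{2^m}}(-1)^{P(\Tr_k^m(s))+\Tr_1^m(u t^{\eta} s)} .
\]
The first term is $2^m(-1)^{P(0)}$ if $u=0$ and $0$ otherwise.

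Next I evaluate the inner sum over $s$ by grouping according to $\gamma=\Tr_k^m(s)$. For $\alpha\in\mathbb{F}_{2^m}$, the sum $\sum_{\Tr_k^m(s)=\gamma}(-1)^{\Tr_1^m(\alpha s)}$ vanishes unless $\alpha\in\mathbb{F}_{2^k}$. In that case it equals $2^{m-k}(-1)^{\Tr_1^k(\alpha\gamma)}$, because for $\alpha\in\mathbb{F}_{2^k}$ we have $\Tr_1^m(\alpha s)=\Tr_1^k(\alpha\Tr_k^m(s))$ and the fibres of $\Tr_k^m$ have size $2^{m-k}$. Hence the inner sum is $2^{m-k}W_P(\alpha)$ with $\alpha=u t^{\eta}$ whenever $\alpha\in\mathbb{F}_{2^k}$, and $0$ otherwise, where $W_P$ is the Walsh transform of $P$ on $\mathbb{F}_{2^k}$. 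Since $P$ is balanced, $W_P(0)=0$.

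If $u=0$, the second term vanishes for exactly this reason, and we get $W_g(0,v)=2^m(-1)^{P(0)}$. This matches the claimed formula with the convention $0^{-e}=0$.

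If $u\neq 0$, only those $t$ with $u t^{\eta}=a\in\mathbb{F}_{2^k}^{*}$ contribute. Since $\gcd(e,2^m-1)=1$, these are $t=(a/u)^{e}=a^{e}u^{-e}=a^{2^{\ell}}u^{-e}$, using $e\equiv 2^{\ell}\pmod{2^k-1}$ and $a\in\mathbb{F}_{2^k}^{*}$. Then
\[
\Tr_1^m\bigl(v\,a^{2^{\ell}}u^{-e}\bigr)=\Tr_1^k\bigl(a^{2^{\ell}}\Tr_k^m(vu^{-e})\bigr)=\Tr_1^k(a\beta),\qquad \beta=\Tr_k^m(vu^{-e})^{2^{m-\ell}},
\]
where the last step applies the Frobenius $2^{-\ell}$, which on $\mathbb{F}_{2^k}$ coincides with $2^{m-\ell}$ because $k\mid m$. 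Therefore
\[
W_g(u,v)=2^{m-k}\sum_{a\in\mathbb{F}_{2^k}}W_P(a)(-1)^{\Tr_1^k(a\beta)}=2^{m}(-1)^{P(\beta)},
\]
by Walsh inversion on $\mathbb{F}_{2^k}$; the term $a=0$ may be added because $W_P(0)=0$. This gives $g^{\ast}(u,v)=P(\beta)$. The second expression in the statement then follows because Frobenius commutes with $\Tr_k^m$ and with the exponent $-e$, so $\beta=\Tr_k^m(\tilde v\,\tilde u^{-e})$.

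The main obstacle is the bookkeeping in the $u\neq 0$ case: correctly parametrising the surviving $t$ via $a^{e}=a^{2^{\ell}}$ on $\mathbb{F}_{2^k}$, and tracking the Frobenius twist so that it appears as $2^{m-\ell}$. I expect the rest to be routine character-sum manipulation.
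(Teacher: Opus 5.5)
Your proof is correct. It follows the paper's overall plan: evaluate $W_g$ by splitting $\mathbb{F}_{2^m}\times\mathbb{F}_{2^m}$ along the partition $\Omega_2$ on which $g$ is constant. The key step, however, is carried out differently.

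The paper imports the character values $\chi_{u,v}(V)$ and $\chi_{u,v}\bigl(B(\gamma)\bigr)$ from Proposition~10 of [akm24] and then uses only the balancedness of $P$. The parts contributing $-2^{m-k}(-1)^{P(\gamma)}$ cancel, leaving the single $\gamma$ with $\gamma^{2^\ell}=\Tr_k^m(vu^{-e})$.

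You instead sum over $t=y$ first and reduce the inner sum over $s$ to $2^{m-k}W_P(ut^{\eta})$ via the fibres of $\Tr_k^m$. You then identify the surviving $t$ as $a^{2^\ell}u^{-e}$ with $a\in\mathbb{F}_{2^k}^{*}$, and finish by Walsh inversion on $\mathbb{F}_{2^k}$. Interchanging your two sums in fact reproduces the cited values: for $u\neq 0$ you get $\chi_{u,v}\bigl(B(\gamma)\bigr)=2^m-2^{m-k}$ if $\gamma=\beta$ and $-2^{m-k}$ otherwise.

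So your argument is self-contained and treats $(u,v)=(0,0)$ uniformly. The paper's version is shorter, and it isolates the one nontrivial input, the character values of the parts, which is how duals are computed for general bent partitions.

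The only step you assert without justification is that $\sum_{\Tr_k^m(s)=\gamma}(-1)^{\Tr_1^m(\alpha s)}=0$ for $\alpha\notin\mathbb{F}_{2^k}$. It follows in one line: the annihilator of $\ker\Tr_k^m$ under the form $\Tr_1^m(xy)$ contains $\mathbb{F}_{2^k}$ and has $\mathbb{F}_2$-dimension $k$, hence equals $\mathbb{F}_{2^k}$.
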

	\begin{proof}
		For \((u,v) \in \bigl(\mathbb{F}_{2^{m}} \times \mathbb{F}_{2^{m}}\bigr)
		\setminus \{(0,0)\}\), let \(\chi_{u,v}\) denote the character of
		\(\mathbb{F}_{2^{m}} \times \mathbb{F}_{2^{m}}\) defined by
		$\chi_{u,v}(x,y)
		=
		(-1)^{\Tr_1^{m}(ux + vy)}$.
		Let $\Gamma_2 = \{V, \mathcal{B}(\gamma)\; : \; \gamma\in\F_{2^k}\}$ 
		be the generalized Desarguesian spread defined in \eqref{eq:Gamma}, where \(V\) and
		\(\mathcal{B}(\gamma)\)  are as in \eqref{eq:Bgamma} and \eqref{eq:Agamma}, respectively.
		We note that \(g(x,y)=P(\gamma)\) for all \((x,y)\in \mathcal{B}(\gamma)\) and
		\(g(x,y)=P(0)\) for all \((x,y)\in V\), i.e., \(g\) is a bent function arising from the bent partition
		\(\Gamma_2\).
		By~\cite[Proposition~10]{akm24}, for \(\gamma \in \mathbb{F}_{2^{k}}\) we have
		\[
		\chi_{u,v}\bigl(B(\gamma)\bigr)
		=\sum_{(x,y)\in B(\gamma)}(-1)^{\Tr_1^{m}(ux + vy)}=
		\begin{cases}
		2^{m} - 2^{m-k}, &
		\text{if } u \neq 0 \text{ and }
		\gamma^{2^\ell} = \Tr_k^{m}\!\left(vu^{-e}\right), \\[1mm]
		-\,2^{m-k}, & \text{otherwise},
		\end{cases}
		\] 
		and
		\[
		\chi_{u,v}(V)
		=\sum_{(x,y)\in V}(-1)^{\Tr_1^{m}(ux + vy)}
		=
		\begin{cases}
		0, & \text{if } u \neq 0, \\[1mm]
		2^{m}, & \text{otherwise}.
		\end{cases}
		\]
		We now compute the Walsh transform of \(g\):
		\begin{align*}
			W_g(u,v)
			&=
			\sum_{(x,y) \in \mathbb{F}_{2^{m}} \times \mathbb{F}_{2^{m}}}
			(-1)^{\,g(x,y) + \Tr_1^{m}(ux+vy)} \\
			&=
			\sum_{(x,y) \in \mathbb{F}_{2^{m}} \times \mathbb{F}_{2^{m}}}
			(-1)^{\,P\!\left(\Tr_{k}^{m}\!\left(x\,y^{-\eta }\right)\right)
				+ \Tr_1^{m}(ux+vy)} \\
			&=
			\sum_{\gamma \in \mathbb{F}_{2^{k}}}
			\sum_{(x,y) \in B(\gamma)}
			(-1)^{\,P(\gamma) + \Tr_1^{m}(ux+vy)}
			+
			\sum_{(x,y) \in V}
			(-1)^{\,P(0) + \Tr_1^{m}(ux+vy)} \\
			&=
			\sum_{\gamma \in \mathbb{F}_{2^{k}}}
			(-1)^{\,P(\gamma)} \chi_{u,v}\bigl(B(\gamma)\bigr)
			+
			(-1)^{\,P(0)} \chi_{u,v}(V).
		\end{align*}
		Using the character values above together with the balancedness of
		\(P \colon \mathbb{F}_{2^{k}} \to \mathbb{F}_2\), we obtain
		\[
		W_g(u,v)
		=
		\begin{cases}
		(-1)^{P(0)}\,2^{m}, & \text{if } u = 0, \\[1mm]
		(-1)^{P(\gamma)}\,2^{m},
		& \text{if } u \neq 0 \text{, where }
		\gamma^{2^\ell} = \Tr_k^{m}\!\left(v\,u^{-e}\right).
		\end{cases}
		\]
		This yields the claimed expression for the dual function, together with the identity $\gamma = \gamma^{2^m}
		= \Tr_k^{m}\!\left(v\,u^{-e}\right)^{\,2^{m-\ell}}
		=\Tr_k^{m}\!\left(v^{\,2^{m-\ell}}\,u^{-e{\,2^{m-\ell}}}\right)$.
	\end{proof}
	
	Note that the trace map
	\(\Tr_k^{m} \colon \mathbb{F}_{2^{m}} \to \mathbb{F}_{2^{k}}\)
	and the function
	\(P \colon \mathbb{F}_{2^{k}} \to \mathbb{F}_2\)
	are both balanced. Consequently, the composition
	\(P\!\left(\Tr_k^{m}(z)\right)\)
	defines a balanced Boolean function from \(\mathbb{F}_{2^{m}}\) to \(\mathbb{F}_2\).
	Hence, there exists a permutation \(Q\) of \(\mathbb{F}_{2^{m}}\) such that $P\!\left(\Tr_k^{m}(z)\right)
	=
	\Tr_1^{m}\!\left(Q(z)\right)$.
	Therefore, in the remainder of this section, we consider the generalized
	\(\mathcal{PS}_{ap}\) bent functions from
	\(\mathbb{F}_{2^{m}} \times \mathbb{F}_{2^{m}}\) to \(\mathbb{F}_2\)
	of the form 
	\begin{align}\label{gps:new}
		f(x,y)
		=
		\Tr_1^{m}\!\left(Q\!\left(x\,y^{-\eta }\right)\right).
	\end{align}
	We remark that, in this case, the dual function of \(f\) is given by
	\begin{align}\label{gps:newdual}
		f^{\ast}(x,y)
		=
		\Tr_1^{m}\!\left(Q\!\left(\tilde y\, \tilde x^{-e}\right)\right),
	\end{align}
	where $\tilde y=y^{2^{m-\ell}}$ and $\tilde x=x^{2^{m-\ell}}$.
	Moreover, by Lemma~\ref{lem:second}, we can without loss of generality suppose that $Q(0)=0$.

	\begin{proposition}\label{mainprop}
		Let $m > 4$, and let $Q$ be a permutation of $\mathbb{F}_{2^{m}}$ of odd polynomial degree less than or equal to $\frac{2^{m/4}-1}{3e}$. 
		Let $u=(a,b)$ and $v=(c,d)$ be two linearly independent vectors in $\mathbb{F}_{2^{m}} \times \mathbb{F}_{2^{m}}$, and set $\mathcal{S} = \langle u, v \rangle^{\perp}$. 
		Let \(f \colon \mathbb{F}_{2^{m}} \times \mathbb{F}_{2^{m}} \to \mathbb{F}_2\) be the generalized
		\(\mathcal{PS}_{ap}\) bent function defined in \eqref{gps:new}.
		If $ad + bc \neq 0$, then $f$ admits neither a bent nor a semibent decomposition on $\mathcal{S}$.
	\end{proposition}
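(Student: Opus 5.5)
By Lemma~\ref{lem:decom}, it suffices to show that $D_uD_vf^*$ is not constant on $\mathbb{F}_{2^m}\times\mathbb{F}_{2^m}$. We will exhibit points where it takes the value $0$ and points where it takes the value $1$. We use the dual in \eqref{gps:newdual}. Since $t\mapsto t^{2^{m-\ell}}$ is an additive automorphism, we may absorb the Frobenius twist into the directions: replacing $(a,b),(c,d)$ by their $2^{m-\ell}$-th powers preserves both linear independence and the condition $ad+bc\neq 0$. So we study
\[
\phi(x,y)=\Tr_1^m\!\Bigl(Q\bigl(yx^{-e}\bigr)+Q\bigl((y+b)(x+a)^{-e}\bigr)+Q\bigl((y+d)(x+c)^{-e}\bigr)+Q\bigl((y+b+d)(x+a+c)^{-e}\bigr)\Bigr).
\]

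To make this a curve problem with a single free variable, we restrict to a suitable line, for instance $y=\lambda x$ or $y=\text{const}$. The hypothesis $ad+bc\neq0$ says exactly that the four points $0,u,v,u+v$ are not collinear through the origin in the relevant sense. As a result, the four rational functions $R_i(x)=(y+\beta_i)(x+\alpha_i)^{-e}$ have poles at distinct places, or at least one $x$-coordinate $\alpha_i$ is a pole not cancelled by the others. Writing $Q$ as a polynomial of odd degree $q\le (2^{m/4}-1)/(3e)$, the function $Q(R_i(x))$ has a pole of order $q\cdot e$ at $x=\alpha_i$. Since $q$ is odd and $e$ can be taken odd (it is coprime to $2^m-1$; replace it by its odd part via Frobenius if needed), this pole order is odd. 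Hence the Artin--Schreier curve
\[
Z^2+Z=\sum_i Q(R_i(X))+\theta
\]
has a ramified place there that cannot be removed by adding $h^2+h$. The extension is therefore nontrivial of degree $2$, and $\mathbb{F}_{2^m}$ is its full constant field.

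As in Lemma~\ref{lem:trace}, the genus is computed by the Hurwitz formula from the different exponents at the at most four (or five, including $\infty$) poles. Each pole has order at most $qe$, so $g\le 2\cdot qe\cdot O(1)$, roughly $g\le 3qe$ up to a constant. The Hasse--Weil bound then gives at least $2^m+1-2g2^{m/2}$ rational places. From this we subtract the places over the poles, at most $2\cdot5$, and the points excluded because $x\in\{0,a,c,a+c\}$. The degree bound $qe\le(2^{m/4}-1)/3$ is exactly what makes $2g\,2^{m/2}$ much smaller than $2^m$ for $m>4$, so affine points exist both for $\Tr_1^m(\theta)=0$ and for $\Tr_1^m(\theta)=1$. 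This gives $\phi$ both values $0$ and $1$, and Lemma~\ref{lem:decom} rules out both a bent and a semibent decomposition.

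The main obstacle is the first step, namely ensuring that on the chosen line the total pole structure really has an odd order term that cannot be cancelled. If two of the $\alpha_i$ coincide, their pole contributions $Q((y+\beta_i)(x+\alpha)^{-e})$ could combine. We would first handle the generic case $a,c,a+c$ all nonzero and distinct. We would then treat the degenerate cases $a=0$, $c=0$ or $a=c$ separately by restricting to a line $x=$ const and using $y$ as the variable, or by using the leading coefficient $(y+\beta_i)^{q}$ versus $(y+\beta_j)^q$. In those cases $ad+bc\neq0$ guarantees $\beta_i\neq\beta_j$ where needed, so the leading pole coefficient is nonzero for suitable choices of the line.
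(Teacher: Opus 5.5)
Your skeleton is the paper's: Lemma~\ref{lem:decom}, restriction of $D_uD_vf^*$ to a line, an Artin--Schreier curve whose right-hand side has an isolated pole of odd order $qe$, and a Hasse--Weil count giving affine points for both $\Tr_1^m(\vartheta)=0$ and $\Tr_1^m(\vartheta)=1$. The paper first normalizes the directions to $(1,0),(0,1)$ via Lemma~\ref{ftof}. Its three substitutions ($y=x$ if $b=d$; $y=\tfrac{b}{d}x$ if $bd\neq0$, $b\neq d$; $y=0$ if $b=0$) all amount, in your un-normalized coordinates, to the single horizontal line $y=0$. There, with $Q(0)=0$,
\[
\phi(x,0)=\Tr_1^m\!\Bigl(Q\bigl(b(x+a)^{-e}\bigr)+Q\bigl(d(x+c)^{-e}\bigr)+Q\bigl((b+d)(x+a+c)^{-e}\bigr)\Bigr).
\]
This line settles the step you call the main obstacle uniformly. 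If $a,c,a+c$ are distinct and nonzero, any nonzero one among $b,d,b+d$ gives an isolated pole, and one exists since $(b,d)\neq(0,0)$. Otherwise exactly one of $a,c,a+c$ vanishes, and $ad+bc\neq0$ forces $b\neq0$ (if $a=0$), $d\neq0$ (if $c=0$), resp.\ $b\neq d$ (if $a=c$). So the pole at $x=0$ is isolated with nonzero coefficient.

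Your first fallback for the degenerate cases, vertical lines $x=\text{const}$, does not work. For $Q(x)=x$ and $a=0$ one gets $\phi(x,y)=\Tr_1^m\bigl(bx^{-e}+b(x+c)^{-e}\bigr)$, which is constant on every vertical line. Only the leading-coefficient comparison on a horizontal line is viable, and $y=0$ is its cleanest instance.

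Where you genuinely depart from the paper is the point count. You bound the genus through the Artin--Schreier different ($g\le(3qe+1)/2$ on $y=0$) and apply Hasse--Weil to the function field. Lemma~\ref{lem:curveDuv} instead uses the plane-curve bound $(d-1)(d-2)2^{m/2}$ with $d\le 3qe+2$, which is exactly what forces $3qe\le 2^{m/4}-1$. Your route is sharper and would only need $qe$ of order $2^{m/2}$.

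One correction: coprimality with $2^m-1$ does not make $e$ odd. The parity reduction should be done inside the trace after restricting to the line. If $e=2^se'$ with $e'$ odd, then $\Tr_1^m(w^{2^s})=\Tr_1^m(w)$ lets you rewrite each $\Tr_1^m\bigl(Q(\beta(x+\alpha)^{-e})\bigr)$ as the trace of a rational function with a pole of order $qe'$ at $\alpha$. The paper tacitly needs $e\deg Q$ odd as well.
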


	\begin{remark} \label{rem:mainpro}
		Let
		$f\colon \mathbb{F}_{2^{m}} \times \mathbb{F}_{2^{m}} \to \mathbb{F}_2$
		be the function defined in \eqref{gps:new}, and suppose that it satisfies the
		assumptions of Proposition~\ref{mainprop}. Then, equivalently, we have:
		\begin{itemize}
			\item[(i)] The dual function $f^{\ast}$ of $f$ does not admit any
			two-dimensional $\mathcal{M}$-subspace
			$\langle (a,b),(c,d) \rangle$ with $ad+bc \neq 0$.
			
			\item[(ii)] If $f$ admits a bent or semibent decomposition on
			$\langle (a,b),(c,d) \rangle^{\perp}$, then necessarily $ad+bc=0$.
			Equivalently, $(c,d)=\lambda (a,b)$ for some
			$\lambda \in \mathbb{F}_{2^{m}} \setminus \mathbb{F}_{2}$.
		\end{itemize}
	\end{remark}
	
	Before proving Proposition~\ref{mainprop}, we require some preliminary results.
	The first lemma is analogous to the one appearing in~\cite{akkmpp26}; however, 
	we include its proof for completeness.
	
	\begin{lemma}\label{ftof}
		Let $\mathcal{S}$ be the $(n-2)$-dimensional subspace of $\F_{2^m}\times \F_{2^m}$ defined by
		\[
		\Tr_1^m(ax+by)=0
		\quad\text{and}\quad
		\Tr_1^m(cx+dy)=0.
		\]
		Assume that $ad+bc\neq 0$. Then the function $f(x,y)=\Tr_1^m\!\bigl(Q(xy^{-\eta})\bigr)$
		admits a bent decomposition (respectively, a semibent decomposition) on $\mathcal{S}$ if and only if $D_{(1,0)}D_{(0,1)}\widehat{f}(x,y)$
		is identically equal to $1$ (respectively, identically equal to $0$) for all \((x,y) \in \F_{2^m}\times \F_{2^m}\), where
		\[
		\widehat{f}(x,y)
		=
		\Tr_1^m\!\left(
		Q\!\left(
		\bigl(b^{2^{m-\ell}} x + d^{2^{m-\ell}} y\bigr)
		\bigl(a^{2^{m-\ell}} x + c^{2^{m-\ell}} y\bigr)^{-e}
		\right)
		\right).
		\]
	\end{lemma}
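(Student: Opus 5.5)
The plan is to combine Lemma~\ref{lem:decom} with the explicit dual \eqref{gps:newdual}, and then transport the second-order derivative to the canonical directions using Lemma~\ref{lem:second}.

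First, by Lemma~\ref{lem:decom}, $f$ admits a bent (respectively, semibent) decomposition on $\mathcal{S}=\langle u,v\rangle^{\perp}$, with $u=(a,b)$ and $v=(c,d)$, if and only if $D_uD_v f^{\ast}$ is identically $1$ (respectively, $0$). Here
\[
f^{\ast}(x,y)=\Tr_1^m\bigl(Q(\tilde y\,\tilde x^{-e})\bigr),
\]
with $\tilde x=x^{2^{m-\ell}}$ and $\tilde y=y^{2^{m-\ell}}$.

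Next, define the $\mathbb{F}_2$-linear map $L\colon \F_{2^m}\times\F_{2^m}\to\F_{2^m}\times\F_{2^m}$ by
\[
L(x,y)=xu+yv=(ax+cy,\;bx+dy).
\]
Since $ad+bc\neq 0$, its determinant over $\F_{2^m}$ is nonzero, so $L$ is a linear permutation. It satisfies $L(1,0)=u$ and $L(0,1)=v$, that is, $L^{-1}(u)=(1,0)$ and $L^{-1}(v)=(0,1)$. By Lemma~\ref{lem:second} (with $c=d=0$ and $e=0$ there), $D_uD_vf^{\ast}\equiv 1$ (respectively, $\equiv 0$) if and only if $D_{(1,0)}D_{(0,1)}(f^{\ast}\circ L)\equiv 1$ (respectively, $\equiv 0$).

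It remains to compute $f^{\ast}\circ L$. Because Frobenius is additive,
\[
\widetilde{(ax+cy)}=(ax+cy)^{2^{m-\ell}}=a^{2^{m-\ell}}\tilde x+c^{2^{m-\ell}}\tilde y,
\]
and similarly $\widetilde{(bx+dy)}=b^{2^{m-\ell}}\tilde x+d^{2^{m-\ell}}\tilde y$. Hence $f^{\ast}\circ L=\widehat f\circ\phi$, where $\phi(x,y)=(\tilde x,\tilde y)$ is the componentwise Frobenius power.

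The main obstacle is removing $\phi$, since $\widehat f$ is stated in the untwisted variables. The map $\phi$ is an $\mathbb{F}_2$-linear permutation that fixes both $(1,0)$ and $(0,1)$. Applying Lemma~\ref{lem:second} once more, now with the linear permutation $\phi$, gives
\[
D_{(1,0)}D_{(0,1)}(\widehat f\circ\phi)\equiv\text{const}
\iff
D_{\phi(1,0)}D_{\phi(0,1)}\widehat f\equiv\text{const},
\]
with the same constant on both sides, and the right-hand side is exactly $D_{(1,0)}D_{(0,1)}\widehat f$ because $\phi$ fixes these two vectors. Chaining these three equivalences proves the lemma.
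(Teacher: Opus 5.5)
Your proof is correct and follows the paper's argument: Lemma~\ref{lem:decom} combined with the explicit dual, then Lemma~\ref{lem:second} applied to a linear permutation sending $(1,0),(0,1)$ to $u,v$. The only difference is cosmetic. The paper builds the Frobenius twist into the map, taking $L(x,y)=(ax^{2^\ell}+cy^{2^\ell},\,bx^{2^\ell}+dy^{2^\ell})$ so that $f^{\ast}\circ L=\widehat f$ in a single step. That map is exactly your untwisted $L$ composed with $\phi^{-1}$, so your second use of Lemma~\ref{lem:second} just makes this factorization explicit.
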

	\begin{proof}
		By Lemma~\ref{lem:decom}, the function $f$ admits a bent decomposition (respectively, a semibent decomposition) on $\mathcal{S}$ if and only if the second-order derivative $D_u D_v f^*(x)$
		of the dual function $f^*(x,y)$ given in \eqref{gps:newdual}
		is constantly equal to $1$ (respectively, constantly equal to $0$), where $u=(a,b)$ and $v=(c,d)$.
		
		By Lemma~\ref{lem:second}, this condition is equivalent to requiring that $D_{(1,0)} D_{(0,1)} f^*\bigl(L(x,y)\bigr)$
		is constantly equal to $1$ (respectively, $0$), where $L$ is a linear permutation of
		$\F_{2^m}\times\F_{2^m}$ satisfying $L(1,0)=(a,b)$ and $L(0,1)=(c,d)$.
		
		Define the linear map $L$ by $L(x,y)=(ax^{2^\ell}+cy^{2^\ell},\; bx^{2^\ell}+dy^{2^\ell})$.
		The map $L$ permutes $\F_{2^m}\times\F_{2^m}$ if and only if $ad+bc\neq 0$. Under this assumption, we indeed have
		$L(1,0)=(a,b)$ and $L(0,1)=(c,d)$. Moreover,
		\[
		\begin{aligned}
		f^*\bigl(L(x,y)\bigr)
		&=
		\Tr_1^m\!\left(
		Q\!\left(
		(bx^{2^\ell}+dy^{2^\ell})^{2^{m-\ell}}
		(ax^{2^\ell}+cy^{2^\ell})^{-2^{m-\ell}e}
		\right)
		\right) \\[4pt]
		&=
		\Tr_1^m\!\left(
		Q\!\left(
		\bigl(b^{2^{m-\ell}}x+d^{2^{m-\ell}}y\bigr)
		\bigl(a^{2^{m-\ell}}x+c^{2^{m-\ell}}y\bigr)^{-e}
		\right)
		\right) \\
		&= \widehat{f}(x,y).
		\end{aligned}
		\]
		This yields the desired conclusion.
	\end{proof}

	By Lemma~\ref{ftof}, in order to prove Proposition~\ref{mainprop}, it suffices to show that,
	for $u=(1,0)$ and $v=(0,1)$, the second-order derivative $D_uD_v\widehat f(x,y)$
	is nonconstant.
	Since $a^{2^{m-\ell}} d^{2^{m-\ell}}
	+
	b^{2^{m-\ell}} c^{2^{m-\ell}}
	=
	(ad+bc)^{2^{m-\ell}}$,
	the condition $ad+bc \neq 0$ holds if and only if $a^{2^{m-\ell}} d^{2^{m-\ell}}
	+
	b^{2^{m-\ell}} c^{2^{m-\ell}}
	\neq 0$.
	Hence, without loss of generality, we may assume $ad+bc \neq 0$ and consider
	\[
	\widetilde{f}(x,y)
	=
	\Tr_1^m\!\left(
	Q\!\left(
	(bx+dy)(ax+cy)^{-e}
	\right)
	\right).
	\]
	More precisely, we compute
	\begin{align}
		D_uD_v\widetilde f(x,y)
		&=
		\widetilde f(x+1,y+1)
		+\widetilde f(x+1,y)
		+\widetilde f(x,y+1)
		+\widetilde f(x,y) \label{eq:Duv} \\
		&=
		\Tr_1^m\Big(
		Q\big((b(x+1)+d(y+1))(a(x+1)+c(y+1))^{-e}\big) \nonumber \\
		&\qquad
		+ Q\big((b(x+1)+dy)(a(x+1)+cy)^{-e}\big) \nonumber \\
		&\qquad
		+ Q\big((bx+d(y+1))(ax+c(y+1))^{-e}\big) \nonumber \\
		&\qquad
		+ Q\big((bx+dy)(ax+cy)^{-e}\big)
		\Big). \nonumber
	\end{align}
	
	The following lemma is frequently used to characterize the circumstances under which the second-order derivative of a function cannot be constant.
	
	\begin{lemma}\label{lem:curveDuv}
		For a positive integer $\ell \geq 1$, let $\mathcal{X}_\vartheta$ be the curve over $\F_{2^m}$ defined by
		\begin{equation} \label{eq:X_theta}
			Z^2+Z
			=
			P\!\left( \frac{\alpha_1 X+\beta_1}{\kappa_1 X+\upsilon_1} \right)
			+
			\cdots
			+
			P\!\left( \frac{\alpha_\ell X+\beta_\ell}{\kappa_\ell X+\upsilon_\ell} \right)
			+\vartheta ,
		\end{equation}
		where $P$ is a polynomial of odd degree $t$.
		Assume that
		\[
		\alpha_i\upsilon_i+\beta_i\kappa_i \neq 0
		\quad \text{for } i=1,\ldots, \ell,
		\qquad\text{and}\qquad
		\exists j \text{ s.t. }\frac{\upsilon_j}{\kappa_j} \neq \frac{\upsilon_i}{\kappa_i}
		\ \text{for all } i\neq j.
		\]
		Then the number $N_\vartheta$ of affine $\F_{2^m}$-rational points
		$(x,z)\in \mathcal{X}_\vartheta$ with  $x\neq \upsilon_i/\kappa_i$, for $i=1, \ldots, \ell$, of $\mathcal{X}_\vartheta$
		satisfies
		\[
		N_\vartheta
		\;\ge\;
		2^m - 1 - (\ell t+1)\ell t\,2^{\frac{m}{2}} - \ell (\ell t+2).
		\]
		In particular, if $m \ge 4$ and $\ell t \le 2^{m/4}-1$, then $N_\vartheta > 0$.
	\end{lemma}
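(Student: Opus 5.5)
We treat $\mathcal{X}_\vartheta$ as an Artin--Schreier curve over the rational function field and run the same argument as in Lemma~\ref{lem:trace}. Set
\[
R(X)=\sum_{i=1}^{\ell} P\!\left(\frac{\alpha_i X+\beta_i}{\kappa_i X+\upsilon_i}\right)+\vartheta \in \F_{2^m}(X),
\]
and let $F_\vartheta=\F_{2^m}(x,z)$ with $z^2+z=R(x)$.

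\textbf{Step 1: pole orders at the places $x=\upsilon_i/\kappa_i$.} Since $\alpha_i\upsilon_i+\beta_i\kappa_i\neq 0$, each Möbius map $X\mapsto(\alpha_iX+\beta_i)/(\kappa_iX+\upsilon_i)$ is nondegenerate. It has a simple pole at $X=\upsilon_i/\kappa_i$, or at $\infty$ if $\kappa_i=0$. Hence the $i$-th summand has a pole of order exactly $t$ there and is regular elsewhere.

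Several indices may share the same pole, and their leading terms could cancel. However, by hypothesis there is an index $j$ whose pole $\upsilon_j/\kappa_j$ is not shared with any other index. At that place $P_j$ only the $j$-th summand has a pole, so $v_{P_j}(R)=-t$. Because $t$ is odd, this pole cannot be removed by adding $w^2+w$ for any $w$.

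\textbf{Step 2: irreducibility, ramification and genus.} Step~1 shows that $R$ is not of the form $w^2+w+\text{const}$. Therefore $F_\vartheta/\F_{2^m}(x)$ is an Artin--Schreier extension of degree $2$ with full constant field $\F_{2^m}$ (see \cite[Proposition~3.7.8]{sti}), and $P_j$ is totally ramified.

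The ramified places lie among the at most $\ell$ distinct pole places of $R$. At each of them, after reducing odd/even pole parts as usual, the reduced pole order is at most $t$. Thus the different exponent is at most $t+1$, and the Hurwitz genus formula \cite[Theorem~3.4.13]{sti} gives
\[
2g-2\le 2(-2)+\ell(t+1), \qquad\text{so}\qquad g\le \frac{\ell(t+1)}{2}-1.
\]
Any cruder bound such as $g\le (\ell t+1)\ell t/2$ also suffices for the claimed estimate.

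\textbf{Step 3: from rational places to affine points.} Hasse--Weil \cite[Theorem~5.2.3]{sti} gives
\[
N(F_\vartheta)\ge 2^m+1-2g\,2^{m/2}.
\]
Rational places lying over $x=\infty$ or over some $x=\upsilon_i/\kappa_i$ number at most $2(\ell+1)$ and are discarded.

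Each remaining rational place lies over some $x_0\in\F_{2^m}$ with $x_0\neq\upsilon_i/\kappa_i$ for all $i$. There $R$ is regular, so $z$ is integral over the local ring. The place therefore corresponds to a solution $(x_0,z_0)$ with $z_0^2+z_0=R(x_0)$, which is exactly one of the affine points counted by $N_\vartheta$. Distinct places give distinct points, because a split place yields the two points $(x_0,z_0)$ and $(x_0,z_0+1)$. So no singularity analysis is needed.

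This yields
\[
N_\vartheta\ge 2^m+1-2g\,2^{m/2}-2(\ell+1).
\]
Substituting the genus bound (or the weaker one matching the stated constant) gives the displayed inequality. The error term $\ell(\ell t+2)$ more than covers the discarded places.

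\textbf{Step 4: positivity.} If $\ell t\le 2^{m/4}-1$, then $(\ell t+1)\ell t\,2^{m/2}\le 2^{m/4}\,(2^{m/4}-1)\,2^{m/2}=2^m-2^{3m/4}$. Also $\ell(\ell t+2)\le \ell t(\ell t+2)<2^{m/2}+2^{m/4}$. Combining these,
\[
N_\vartheta\ge 2^{3m/4}-1-2^{m/2}-2^{m/4}>0 \qquad\text{for } m\ge 4.
\]

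\textbf{Main obstacle.} The delicate point is Step~1/2: ensuring that cancellations among coinciding poles do not destroy the Artin--Schreier structure. This is exactly where the hypothesis of a distinct $\upsilon_j/\kappa_j$, together with $t$ odd, is used. The genus bound itself then only needs crude upper estimates.
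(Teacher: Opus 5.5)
Your proof is correct, but it takes a different route from the paper's.

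The paper treats $\mathcal{X}_\vartheta$ as a plane curve. From the totally ramified place $x=\upsilon_j/\kappa_j$ it gets a full constant field and hence absolute irreducibility. It bounds the degree by $\ell t+2$ and applies the Hasse--Weil bound for (possibly singular) plane curves, $N\ge 2^m+1-(d-1)(d-2)\,2^{m/2}$, which is exactly where the term $(\ell t+1)\ell t\,2^{m/2}$ comes from. It then removes at most two points at infinity and, via B\'ezout, at most $\ell(\ell t+2)$ points on the excluded lines $\kappa_iX+\upsilon_i=0$. No genus computation is needed.

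You instead stay in the function field. You compute the genus from the Artin--Schreier reduced pole orders ($m_P\le t$ at no more than $\ell$ places, so $g\le \ell(t+1)/2-1$) and apply Hasse--Weil for function fields. You then replace the singularity and B\'ezout bookkeeping by the observation that, over a finite place $x_0$ off the poles, rational places of $F_\vartheta$ inject into affine solutions $(x_0,z_0)$. The stated inequality follows because $2g\le(\ell t+1)\ell t$ and $2(\ell+1)\le \ell(\ell t+2)+2$.

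Your route buys a genuinely sharper estimate: the error term is linear rather than quadratic in $\ell t$. Positivity therefore already holds for roughly $\ell(t+1)\le 2^{m/2}$ instead of $\ell t\le 2^{m/4}-1$, which would relax the degree hypotheses in Proposition~\ref{mainprop} and Theorem~\ref{thm:gps}. It also handles possible cancellation of polar parts at coinciding poles transparently. In the plane-curve picture, by contrast, one must put the right-hand side in lowest terms, or clearing denominators can create spurious vertical-line components. The paper's route is shorter, needing only absolute irreducibility and the degree of the defining polynomial.
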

	
	\begin{proof}
		Let $F_\vartheta=\F_{2^m}(x,z)$ be the function field of $\mathcal{X}_\vartheta $.
		We may regard $F_\vartheta$ as an extension of the rational function field
		$\F_{2^m}(x)$ defined by the equation $z^2+z=g_\vartheta(x)$,
		where
		\[
		g_\vartheta(x)
		=
		P\!\left( \frac{\alpha_1 x+\beta_1}{\kappa_1 x+\upsilon_1} \right)
		+
		\cdots
		+
		P\!\left( \frac{\alpha_\ell x+\beta_\ell}{\kappa_\ell x+\upsilon_\ell} \right)
		+\vartheta.
		\]
		Note that $F_\vartheta/\F_{2^m}(x)$ is an Artin--Schreier extension of degree~$2$;
		see \cite[Proposition~3.7.8]{sti}.
		A place $R$ of $\F_{2^m}(x)$ ramifies in $F_\vartheta$ only if the valuation
		$v_R(g_\vartheta(x))$ is negative.
		This can occur only when $R=(x=\upsilon_i/\kappa_i)$ for $i=1, \ldots, \ell$.
		Moreover, by the strict triangle inequality (see \cite[Lemma~1.1.11]{sti}), 
		for the place $R_j$ corresponding to 
		$x = \upsilon_j/\kappa_j$, where $\tfrac{\upsilon_j}{\kappa_j} \neq \tfrac{\upsilon_i}{\kappa_i}$ for all $i \neq j$,
		we obtain
		\[
		v_{R_j}\bigl(g_\vartheta(x)\bigr) = -\deg(P) = -t.
		\]
		Since $t$ is odd, the place $R_j$ is totally ramified in the extension
		$F_\vartheta/\F_{2^m}(x)$.
		Consequently, $F_\vartheta$ is a function field with full constant field
		$\F_{2^m}$.
		It follows that $\mathcal{X}_\vartheta$ is an absolutely irreducible curve defined
		over $\F_{2^m}$; see \cite[Corollary~3.6.8]{sti}.
		Furthermore, $\mathcal{X}_\vartheta $ has degree
		$\deg(\mathcal{X}_\vartheta)\le \ell t+2$.
		Applying the Hasse-Weil bound (see \cite[Theorem~9.57]{hkt}), we obtain the following
		estimate for the number $N(\mathcal{X}_\vartheta)$ of $\F_{2^m}$-rational points of
		$\mathcal{X}_\vartheta$ in the projective plane:
		\begin{align}\label{eq:N}
			N(\mathcal{X}_\vartheta)
			&\ge
			2^m+1-(\deg(\mathcal{X}_\vartheta)-1)(\deg(\mathcal{X}_\vartheta)-2)\,2^{\frac{m}{2}} \nonumber\\
			&\ge
			2^m+1-(\ell t+1)\ell t \,2^{\frac{m}{2}}.
		\end{align}
		As the highest-degree term in the defining equation of
		$\mathcal{X}_\vartheta$ is $X^{s}Z^2$ with $s\le \ell t$, the curve
		$\mathcal{X}_\vartheta$ has at most two $\F_{2^m}$-rational points at infinity, namely $(0:1:0)$ and $(1:0:0)$.
		In order to ensure that the denominators in \eqref{eq:X_theta} do not vanish, 
		we must also exclude all points lying on the $\ell$ lines defined by $\kappa_i X + \upsilon_i = 0$.
		By Bézout’s theorem, a line intersects $\mathcal{X}_\vartheta$ in at most $\deg(\mathcal{X}_\vartheta)$ points.
		Hence, by subtracting $2+\ell\deg(\mathcal{X}_\vartheta)$ from \eqref{eq:N}, we obtain the desired bound on the number $N_\vartheta$ of affine $\F_{2^m}$-rational points. 
		Moreover, whenever $m \ge 4$
		and $\ell t \le 2^{m/4}-1$, we obtain
		\[
		(\ell t+1)\ell t\,2^{\frac{m}{2}} + \ell(\ell t+2) + 1 < 2^{m},
		\]
		and hence $N_\vartheta > 0$.
	\end{proof}
	
	\medskip
	
	\noindent {\it \textbf{Proof of Proposition \ref{mainprop}.}}
	The proof is carried out by a case-by-case analysis.
	
	\medskip
	
	\noindent \textbf{Case (i):} $b=d$.  \\[.3em]
	Note that the condition $b=d$ implies $b\neq 0$ and $a\neq c$, since
	$ad+bc\neq 0$.
	In this case, we set $y=x$.
	Then, by \eqref{eq:Duv}, for $x\neq \tfrac{a}{a+c},\ \tfrac{c}{a+c}$,
	we obtain
	\begin{align*}
		D_uD_v\widetilde{f}(x,x)
		&=
		\Tr^m_1\!\left(
		Q\bigl(b((a+c)x+a)^{-e}\bigr)
		+
		Q\bigl(d((a+c)x+c)^{-e}\bigr)
		\right) \\
		&=
		\Tr^m_1\!\left(
		Q\!\left( \frac{b}{((a+c)x+a)^{e}} \right)
		+
		Q\!\left( \frac{b}{((a+c)x+c)^{e}} \right)
		\right).
	\end{align*}
	Since $\gcd(e,2^m-1)=1$, the map $x\mapsto x^e$ permutes $\F_{2^m}$.
	Thus, there exists $\tilde b\in\F_{2^m}$ such that $b=\tilde b^{\,e}$.
	Setting $P(X)=Q(X^e)$, we see that there exists
	$x\in \F_{2^m}\setminus\left\{\frac{a}{a+c},\frac{c}{a+c}\right\}$
	such that
	$D_uD_v\tilde f(x,x)=0$, respectively $D_uD_v\tilde f(x,x)=1$,
	if and only if the curve
	\[
	Z^2+Z
	=
	P\!\left( \frac{\tilde b}{(a+c)X+a} \right)
	+
	P\!\left( \frac{\tilde b}{(a+c)X+c} \right)
	+\vartheta
	\]
	has an affine rational point $(x,z)$ with
	$x\notin \left\{\frac{a}{a+c},\frac{c}{a+c}\right\}$,
	where $\vartheta\in\F_{2^m}$ satisfies
	$\Tr^m_1(\vartheta)=0$, respectively $\Tr^m_1(\vartheta)=1$.
	
	By setting in \eqref{eq:X_theta} $\ell=2$,
	\[
	(\alpha_1,\beta_1,\kappa_1,\upsilon_1)=(0,\tilde b,a+c,a),
	\quad \text{and} \quad
	(\alpha_2,\beta_2,\kappa_2,\upsilon_2)=(0,\tilde b,a+c,c),
	\]
	this condition is equivalent to the curve $\mathcal{X}_\vartheta$
	having an affine $\F_{2^m}$-rational point $(x,z)$ with
	$x\notin \left\{\frac{a}{a+c},\frac{c}{a+c}\right\}$.
	Note that $\alpha_1\upsilon_1+\beta_1\kappa_1\neq 0$ and $\alpha_2\upsilon_2+\beta_2\kappa_2\neq 0$
	since $b\neq 0$ and $a\neq c$.
	Moreover, $\tfrac{\upsilon_1}{\kappa_1}\neq \tfrac{\upsilon_2}{\kappa_2}$,
	as $a\neq c$.
	Therefore, the existence of such a point follows from
	Lemma~\ref{lem:curveDuv}.
	
	\medskip
	
	\noindent \textbf{Case (ii):} $bd\neq 0$ and $b\ne d$.  \\[.3em]
	Set $y=\tfrac{b}{d}x$ and $\delta=a+\tfrac{cb}{d}$.
	By the assumption $ad+bc\neq 0$, we have $\delta\neq 0$.
	Then, for $x\neq \tfrac{a+c}{\delta},\ \tfrac{a}{\delta},\ \tfrac{c}{\delta}$,
	we obtain
	\begin{align}\label{eq:case2}
		D_uD_v\widetilde{f}\!\left(x,\frac{b}{d}x\right)
		&=
		\Tr^m_1\!\left(
		Q\!\left( \frac{b+d}{(\delta x+(a+c))^{e}} \right)
		+
		Q\!\left( \frac{b}{(\delta x+a)^{e}} \right)
		+
		Q\!\left( \frac{d}{(\delta x+c)^{e}} \right)
		\right).
	\end{align}
	As the map $x\mapsto x^e$ permutes $\F_{2^m}$,
	there exist elements
	$\tilde b,\tilde d,\widetilde{b+d}\in\F_{2^m}$ such that
	\[
	b=\tilde b^{\,e},\qquad
	d=\tilde d^{\,e},\qquad
	b+d=\left( \widetilde{b+d}\right)^{\,e}.
	\]
	Defining $P(X)=Q(X^e)$, Equation \eqref{eq:case2} can be rewritten as
	\[
	D_uD_v\widetilde{f}\!\left(x,\frac{b}{d}x\right)
	=
	\Tr^m_1\!\left(
	P\!\left( \frac{\widetilde{b+d}}{\delta x+(a+c)} \right)
	+
	P\!\left( \frac{\tilde b}{\delta x+a} \right)
	+
	P\!\left( \frac{\tilde d}{\delta x+c} \right)
	\right).
	\]
	Consequently, there exists $x\in \F_{2^m}\setminus
	\left\{\tfrac{a+c}{\delta},\tfrac{a}{\delta},\tfrac{c}{\delta}\right\}$
	such that
	$D_uD_v\tilde{f}(x,\frac{b}{d}x)=0$,
	respectively $D_uD_v\tilde{f}(x,\frac{b}{d}x)=1$,
	if and only if the curve defined by
	\[
	Z^2+Z
	=
	P\!\left( \frac{\widetilde{b+d}}{\delta X+(a+c)} \right)
	+
	P\!\left( \frac{\tilde b}{\delta X+a} \right)
	+
	P\!\left( \frac{\tilde d}{\delta X+c} \right)
	+\vartheta
	\]
	has an affine $\F_{2^m}$-rational point $(x,z)$ with $x\notin
	\left\{\tfrac{a+c}{\delta},\tfrac{a}{\delta},\tfrac{c}{\delta}\right\}$,
	where $\vartheta\in\F_{2^m}$ satisfies
	$\Tr^m_1(\vartheta)=0$, respectively $\Tr^m_1(\vartheta)=1$.

	We now verify that the assumptions of \eqref{lem:curveDuv} are satisfied. 
	To this end, in \eqref{eq:X_theta} we set $\ell=3$ and 
	\[
	(\alpha_1,\beta_1,\kappa_1,\upsilon_1)
	=
	(0,\widetilde{b+d},\delta,a+c), \,
	(\alpha_2,\beta_2,\kappa_2,\upsilon_2)
	=
	(0,\tilde b,\delta,a),
	\,
	(\alpha_3,\beta_3,\kappa_3,\upsilon_3)
	=
	(0,\tilde d,\delta,c).
	\]
	Note that for every $i$ we have $\alpha_i \upsilon_i + \beta_i \kappa_i
	=
	\beta_i \delta$,
	which is nonzero since $\delta \neq 0$, $bd \neq 0$, and $b \neq d$.
	Moreover, as the $\kappa_i$ are identical, the second assumption reduces to showing that at least one of the $\upsilon_i$ differs from the other two. 
	The equalities $a+c = a = c$ would hold only if $a=0$ and $c=0$, which is impossible because $ad + bc \neq 0$.
	Therefore, by \eqref{lem:curveDuv}, such a rational point always exists.
	
	\medskip
	
	\noindent \textbf{Case (iii):} $b=0$ or $d=0 $. \\[.3em]
	Note that $b$ and $d$ cannot vanish simultaneously since $ad+bc \neq 0$.
	As the case $d=0$ is analogous to the case $b=0$, we may assume without loss of generality that $b=0$.
	Then $ad \neq 0$, and hence
	\begin{align*}
		D_uD_v\widetilde{f}(x,y)
		&= \Tr^m_1\Big(
		Q\big(d(y+1)(a(x+1)+c(y+1))^{-e}\big)
		+Q\big(dy(a(x+1)+cy)^{-e}\big)  \\
		&\qquad\quad
		+Q\big(d(y+1)(ax+c(y+1))^{-e}\big)
		+Q\big(dy(ax+cy)^{-e}\big)
		\Big).
	\end{align*}
	We now set $y=0$. Then, for $x \neq  \tfrac{c}{a},\, 1+\tfrac{c}{a}$, we obtain
	\begin{align*}
		D_uD_v\widetilde{f}(x,0)
		= \Tr^m_1\left(
		Q\left( \frac{d}{(ax+(a+c))^{e}}\right)
		+Q\left( \frac{d}{(ax+c)^{e}}\right)
		\right).
	\end{align*}
	Consequently, there exists
	$x \in \mathbb{F}_{2^m} \setminus \{ \tfrac{c}{a},\, 1+\tfrac{c}{a} \}$
	such that $D_uD_v\tilde{f}(x,0)=0$, respectively $D_uD_v\tilde{f}(x,0)=1$,
	if and only if the curve defined by
	\begin{align*}
		Z^2+Z
		=
		P\left( \frac{\tilde{d}}{aX+(a+c)} \right)
		+P\left( \frac{\tilde{d}}{aX+c} \right)
		+\vartheta
	\end{align*}
	has an affine $\F_{2^m}$-rational point $(x,z)$ with 
	$x \notin \{ \tfrac{c}{a},\, 1+\tfrac{c}{a} \}$,
	where $P(X)=Q(X^e)$, $d=\tilde{d}^e$, and
	$\vartheta \in \mathbb{F}_{2^m}$ satisfies
	$\Tr^m_1(\vartheta)=0$, respectively $\Tr^m_1(\vartheta)=1$.
	By setting in \eqref{eq:X_theta} $\ell=2$,
	\begin{align*}
		(\alpha_1,\beta_1,\kappa_1,\upsilon_1)=(0,\tilde{d},a,a+c),
		\quad \text{and} \quad
		({\alpha_2},{\beta_2},{\kappa_2},{\upsilon_2})=(0,\tilde{d},a,c),
	\end{align*}
	this condition is equivalent to the curve $\mathcal{X}_\vartheta$ having an affine $\F_{2^m}$-rational point $(x,z)$ with
	$x \notin \{ \tfrac{c}{a},\, 1+\tfrac{c}{a} \}$.
	Note that
	$\alpha_i\upsilon_i+\beta_i\kappa_i \neq 0$ for $i=1,2$
	since $ad \neq 0$.
	Moreover,
	$\upsilon_1/\kappa_1 \neq {\upsilon}_2/{\kappa}_2$ since $a\neq 0$.
	Therefore, the existence of such a point follows from
	Lemma~\ref{lem:curveDuv}.
	\hfill$\Box$\\[.5em]
	
	We now give a complete analysis of the generalized $\mathcal{PS}_{ap}$ bent function in the case $Q(x)=x$.
	That is, we consider the bent function
	\[
	f \colon \mathbb{F}_{2^{m}} \times \mathbb{F}_{2^{m}} \to \mathbb{F}_2,
	\qquad
	f(x,y)=\Tr_1^{m}\!\left(x\,y^{-\eta}\right).
	\]

	\begin{corollary}\label{cor:x}
		Let $m$, $k$, and $e$ be positive integers such that
		\[
		k \mid m, \qquad  e \equiv 2^{\ell} \mod{(2^{k}-1)},
		\qquad 
		\gcd(2^{m}-1,e)=1.
		\]
		Let $\eta$ denote the multiplicative inverse of $e$ modulo $2^{m}-1$.
		If $k \le \tfrac{m}{4}-3$, then the generalized $\mathcal{PS}_{ap}$ bent function $f(x,y)=\Tr_1^{m}\!\left(x\,y^{-\eta}\right)$ on $\mathbb{F}_{2^{m}} \times \mathbb{F}_{2^{m}}$
		satisfies the following properties:
		\begin{itemize}
			\item[(i)] The function $f$ admits a semibent decomposition on 
			$\mathcal{S}=\langle u,v\rangle^{\perp}$, for linearly independent
			$u,v \in \mathbb{F}_{2^{m}} \times \mathbb{F}_{2^{m}}$,
			if and only if 
			$u,v \in \{0\}\times \mathbb{F}_{2^{m}}$.
			
			\item[(ii)] The function $f$ does not admit any bent decomposition.
		\end{itemize}
	\end{corollary}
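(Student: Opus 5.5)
The plan is to specialize Proposition~\ref{mainprop} to $Q(x)=x$, so that only the degenerate pairs $(a,b),(c,d)$ with $ad+bc=0$ remain, and to treat those directly on the dual $f^{\ast}(x,y)=\Tr_1^m(\tilde y\,\tilde x^{-e})$ from \eqref{gps:newdual}.

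\textbf{Step 1: the degree hypothesis.} We need to check that Proposition~\ref{mainprop} applies. For $Q(x)=x$ the polynomial entering Lemma~\ref{lem:curveDuv} is $P(X)=Q(X^e)=X^e$. Its degree $t=e$ must be odd and must satisfy $3e\le 2^{m/4}-1$.

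To arrange this, I would first normalize $e$. Replacing $e$ by $2^je$ only composes the inner map with a Frobenius power, and by Lemma~\ref{lem:second} this does not change whether $D_uD_vf^{\ast}$ is constant. So we may take $e$ odd. Next I would use $e\equiv 2^{\ell}\pmod{2^k-1}$ to argue that $e$ may be chosen with $e<2^k$, so that $f$ is EA-equivalent to the function with this small representative. I have not checked this reduction; it needs justification and is one of the two delicate points. Granting it, $k\le \tfrac m4-3$ gives
\[
3e< 3\cdot 2^{k}\le 3\cdot 2^{m/4-3}<2^{m/4}-1 .
\]
Then Proposition~\ref{mainprop}, via Remark~\ref{rem:mainpro}, excludes both bent and semibent decompositions whenever $ad+bc\neq 0$.

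\textbf{Step 2: the degenerate case $ad+bc=0$.} Here $(c,d)=\lambda(a,b)$ with $\lambda\in\F_{2^m}\setminus\F_2$. If $a=0$, then $u,v\in\{0\}\times\F_{2^m}$. For fixed $x$ the function $f^{\ast}(x,\cdot)$ is $\F_2$-linear in $y$, since $y\mapsto\tilde y$ is additive. Hence $D_uD_vf^{\ast}\equiv 0$, and Lemma~\ref{lem:decom}(ii) gives a semibent decomposition. This proves the ``if'' direction of (i).

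If $a\neq 0$, I would show that $D_uD_vf^{\ast}$ is nonconstant. After the Frobenius untwisting as in Lemma~\ref{ftof}, it becomes a sum of terms $\Tr_1^m\bigl(\beta\,(\tilde a x+\gamma)^{-e}\bigr)$ with poles at $x=\gamma/\tilde a$. I would restrict to a line, for example $y=0$, or $y=\tfrac{b}{a}x$ when that is convenient, to get a one-variable expression of the form
\[
\Tr_1^m\!\Bigl(\textstyle\sum_i P\bigl(\tfrac{\beta_i}{\kappa X+\upsilon_i}\bigr)\Bigr),
\qquad \upsilon_i\in\{0,\,\tilde a,\,\tilde\lambda\tilde a,\,(1+\tilde\lambda)\tilde a\}.
\]
Because $\lambda\notin\F_2$, at least one pole $\upsilon_j/\kappa$ is distinct from all the others. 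So Lemma~\ref{lem:curveDuv}, with $\ell\le 4$ and $t=e$, gives affine rational points on $\mathcal X_\vartheta$ both for $\Tr_1^m(\vartheta)=0$ and for $\Tr_1^m(\vartheta)=1$. The bound $4e\le 2^{m/4}-1$ still follows from $k\le\tfrac m4-3$. Hence $D_uD_vf^{\ast}$ takes both values.

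\textbf{Conclusion.} Combining Steps 1 and 2, semibent decompositions occur exactly when $u,v\in\{0\}\times\F_{2^m}$, which is (i). A bent decomposition would require $D_uD_vf^{\ast}\equiv1$ by Lemma~\ref{lem:decom}(i). This is impossible: if $ad+bc\neq0$ it is excluded by Proposition~\ref{mainprop}; if $a=0$ the derivative is $\equiv0$; and if $a\neq0$ with $ad+bc=0$ it is nonconstant by Step 2. This gives (ii).

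The main obstacles are the reduction of $e$ to a small odd representative in Step 1, and the bookkeeping in Step 2: one must verify that for every $\lambda\notin\F_2$ the chosen line keeps one pole distinct and all $\beta_i\neq0$. If a line fails for special $\lambda$, I would switch to another line, say $x=y$.
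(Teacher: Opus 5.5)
Your structure matches the paper's proof. Proposition~\ref{mainprop} handles $ad+bc\neq 0$, linearity in $y$ handles $a=0$, and Lemma~\ref{lem:curveDuv} on a line handles $a\neq 0$. The genuine gap is the reduction in Step~1, and it cannot be closed. The function $f$ depends on $e$ modulo $2^m-1$ (up to multiplication by powers of $2$), not modulo $2^k-1$. The only integer in $[1,2^k)$ congruent to $2^{\ell}$ modulo $2^k-1$ is a power of $2$. So your reduction would make every such $f$ EA-equivalent to the classical $\Tr_1^m(xy^{-1})$.

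That is false, and in fact the corollary itself fails without a bound on $e$. Take $k=1$, $m\ge 16$ (or $k=2$, $m\ge 20$ even) and $e=\eta=2^m-2$; this satisfies every hypothesis. Then $y^{-\eta}=y$, so $f=f^{\ast}=\Tr_1^m(xy)$ and $D_{(a,b)}D_{(c,d)}f^{\ast}\equiv\Tr_1^m(ad+bc)$. Consequently:
\begin{itemize}
\item[(a)] $u=(1,0)$, $v=(\lambda,0)$ with $\lambda\notin\F_2$ gives a semibent decomposition with $u\notin\{0\}\times\F_{2^m}$, contradicting (i);
\item[(b)] $u=(1,0)$, $v=(0,d)$ with $\Tr_1^m(d)=1$ gives a bent decomposition, contradicting (ii).
\end{itemize}
The paper's proof does not derive the degree condition from $k\le\frac m4-3$ either. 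It simply invokes Proposition~\ref{mainprop} and Lemma~\ref{lem:curveDuv}, which for $Q(x)=x$ need $P(X)=X^e$ of odd degree with $4e\le 2^{m/4}-1$. The correct repair is to add this hypothesis on $e$; your Frobenius normalization to odd $e$ is fine. With that hypothesis, your argument coincides with the paper's.

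There is also a bookkeeping point in Step~2. When $b=0$, every term of $D_uD_v\widetilde f$ carries the factor $y$, so both lines you name ($y=0$ and $y=\frac ba x$) give the zero function. Your fallback $x=y$ produces terms $x(x+\gamma)^{-e}$, which are not of the shape required by Lemma~\ref{lem:curveDuv}. Restrict instead to a horizontal line $y=y_0\neq 0$; the paper sets $y=1$ after scaling. This gives four terms $\bigl(\beta/(X+\upsilon_i)\bigr)^{e}$ with pairwise distinct poles $\upsilon_i\in\{0,a,\lambda a,(1+\lambda)a\}$. For $ab\neq 0$ the line $y=0$ works with three terms, as in the paper. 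Avoid the line $y=\frac ba x$ altogether: it turns every term into $\frac ba(x+\gamma)^{1-e}$, and the resulting sum is identically $0$ when $e=1$.
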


	\begin{proof} Let $u=(a,b)$ and $v=(c,d)$ be two linearly independent vectors in
		$\mathbb{F}_{2^{m}} \times \mathbb{F}_{2^{m}}$.
		By Proposition~\ref{mainprop}, the function $f$ does not admit any semibent or bent
		decomposition on $\mathcal{S}$ whenever $ad+bc \neq 0$.
		Therefore, we restrict our attention to the case $ad+bc = 0$.
		Recall that, since $u=(a,b)$ and $v=(c,d)$ are linearly independent vectors in
		$\mathbb{F}_{2^{m}} \times \mathbb{F}_{2^{m}}$, the condition $ad+bc = 0$
		is equivalent to $(c,d)=\lambda (a,b)$ for some $ \lambda \in \mathbb{F}_{2^{m}} \setminus \mathbb{F}_{2}$.
		
		By Lemma~\ref{lem:decom}, the function $f$ admits a semibent (respectively, bent)
		decomposition on $\mathcal{S}$ if and only if its dual $f^{\ast}(x,y)=\Tr_1^{m}\!\left(y^{2^\ell}\,x^{-2^\ell e}\right)$
		satisfies
		\[
		D_u D_v f^{\ast}=0
		\quad\text{(respectively, } D_u D_v f^{\ast}=1\text{).}
		\]
		Note that for the linear map $L(x,y)=(x^{2^\ell},y^{2^\ell})$ we have 
		$f^{\ast}(x,y)=\widetilde{f}(L(x,y))$, where 
		$\widetilde{f}(x,y)=\Tr_1^{m}(y\,x^{-e})$.
		Hence, by Lemma~\ref{lem:second}, it suffices to consider the second-order derivative of $\widetilde{f}$.
		
		By replacing $c=\lambda a$ and $d=\lambda b$,
		the second-order derivative $D_u D_v \widetilde{f}(x,y)$ can be expressed as
		\begin{align*}
			D_u D_v \widetilde{f}(x,y)
			=
			\Tr_1^{m}\!\Big(
			&(y+b(1+\lambda))\,(x+a(1+\lambda))^{-e}
			+(y+b)\,(x+a)^{-e} \\
			& \qquad \qquad \qquad \qquad\qquad +(y+\lambda b)\,(x+\lambda a)^{-e}
			+y\,x^{-e}
			\Big).
		\end{align*}
		
		We now proceed by a case-by-case analysis.
		
		\medskip
		
		\noindent \textbf{Case (i):}  $a=0$, and hence $b\neq 0$.  \\[.3em]
		In this case, we obtain $D_u D_v \widetilde{f}=0$ for all such choices of $u$ and $v$.
		Therefore, $ \widetilde{f}$ admits a semibent decomposition on $\langle u,v\rangle^{\perp}$.
		In particular, for any linearly independent 
		$u,v \in \{0\}\times \mathbb{F}_{2^{m}}$, 
		the function $\widetilde{f}$ admits a semibent decomposition on $\langle u,v\rangle^{\perp}$.
		Since 
		\[
		L^{-1}(\{0\}\times \mathbb{F}_{2^{m}})
		=
		\{0\}\times \mathbb{F}_{2^{m}},
		\]
		it follows from Lemma~\ref{lem:second} that 
		$f^{*}(x,y)$ admits a semibent decomposition on 
		$\langle u,v\rangle^{\perp}$ for all linearly independent 
		$u,v \in \{0\}\times \mathbb{F}_{2^{m}}$.
		
		\medskip
		
		\noindent \textbf{Case (ii):} $b=0$, and hence $a\neq 0$.  \\[.3em]
		Then applying the changes of variables $x\mapsto ax$ and $y\mapsto a^{e}y$ 
		and then setting $y=1$, we obtain
		\begin{align}\label{eq1}
			g(x)
			&=
			\Tr_1^{m}\!\Big(
			(x+1+\lambda)^{-e}
			+   (x+1)^{-e}
			+ (x+\lambda )^{-e}
			+ x^{-e}
			\Big).
		\end{align}
		Hence, if $D_u D_v f^{\ast}(x,y)$ is a constant function,
		then the function $g(x)$ in \eqref{eq1} must also be constant,
		which is impossible by Lemma~\ref{lem:curveDuv}. Then, by Lemma~\ref{lem:second}, the function $f^{*}$ does not admit neither a bent or a semibent decomposition on $\langle u,v\rangle^{\perp}$ for any linearly independent $u,v \in \mathbb{F}_{2^{m}} \times \{0\}$.
		
		\medskip
		
		\noindent \textbf{Case (iii):} $ab \neq 0$.  \\[.3em]
		Proceeding as before, we apply the change of variables $x \mapsto ax$ and
		$y \mapsto by$, and then set $\alpha = b a^{-e}$ and $y=0$. This yields
		\begin{align}\label{eq2}
			g(x)
			&=
			\Tr_1^{m}\!\Big(
			\alpha\Big(
			(1+\lambda)\,(x+1+\lambda)^{-e}
			+ (x+1)^{-e}
			+ \lambda (x+\lambda)^{-e}
			\Big)\,
			\Big).
		\end{align}
		As in the previous cases, if $D_u D_v \widetilde{f}(x,y)$ were a
		constant function, then $g(x)$ in~\eqref{eq2} would also
		have to be constant. This, however, is impossible by  Lemma~\ref{lem:curveDuv}. Then we similarly conclude that $f^{*}$ admits neither a bent nor a semibent decomposition on $\langle u,v\rangle^{\perp}$ for any such linearly independent elements $u,v$.
		
		\medskip
		
		This completes the proof.
	\end{proof}
	
	\begin{remark}
		Note that the generalized $\mathcal{PS}_{ap}$ bent function $f$ in
		Corollary~\ref{cor:x}, and hence its dual $f^{\ast}$, is also an $MM$ function.
		Consequently, $f$ and $f^{\ast}$ admit $m$-dimensional $\mathcal{M}$-subspaces,
		namely the canonical ones $\mathbb{F}_{2^{m}} \times \{0\}$ and
		$\{0\} \times \mathbb{F}_{2^{m}}$, respectively.
		It follows from Corollary~\ref{cor:x} that
		$\{0\} \times \mathbb{F}_{2^{m}}$ is the unique $m$-dimensional
		$\mathcal{M}$-subspace of $f^{\ast}$.
	\end{remark}
	
	We can similarly generalize Corollary~\ref{cor:x} to arbitrary permutation
	polynomials $Q(x)$ whose degree is sufficiently small compared to the
	cardinality of the underlying finite field. In particular, combined with
	Proposition~\ref{mainprop}, we obtain the following sufficient conditions under
	which the generalized $\mathcal{PS}_{ap}$ bent function admits neither a
	semibent nor a bent decomposition. The proof proceeds analogously to that of Corollary~\ref{cor:x}; therefore, we only give a sketch.
	
	\begin{theorem}\label{thm:gps}
		Let $m$, $k$, and $e$ be positive integers such that
		\[
		k \mid m, 
		\qquad  
		e \equiv 2^{\ell} \mod{(2^{k}-1)},
		\qquad 
		\gcd(2^{m}-1,e)=1.
		\]
		Let $\eta$ denote the multiplicative inverse of $e$ modulo $2^{m}-1$.
		Let $Q$ be a permutation polynomial of $\mathbb{F}_{2^{m}}$ 
		of odd degree at most $\tfrac{2^{m/4}-1}{4e}$ satisfying $Q(0)=0$.
		Define the generalized $\mathcal{PS}_{ap}$ bent function 
		$f \colon \mathbb{F}_{2^{m}} \times \mathbb{F}_{2^{m}} \to \mathbb{F}_2$ by $f(x,y)= \Tr_1^{m}\!\left(Q\!\left(x\,y^{-\eta}\right)\right)$.
		Then $f$ admits neither a semibent nor a bent decomposition provided that, 
		for every $\lambda \in \mathbb{F}_{2^{m}} \setminus \mathbb{F}_2$, 
		the Boolean function
		\begin{equation}\label{eq:Qgen}
			g_\lambda(x)
			= \Tr_1^{m}\!\left(
			Q\!\left((1+\lambda)x^{-e}\right)
			+ Q\!\left(x^{-e}\right)
			+ Q\!\left(\lambda x^{-e}\right)
			\right)
		\end{equation}
		on $\mathbb{F}_{2^{m}}$ is nonconstant.
	\end{theorem}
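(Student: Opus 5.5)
The plan follows the proof of Corollary~\ref{cor:x}, using Proposition~\ref{mainprop} for the generic directions. Let $u=(a,b)$ and $v=(c,d)$ be linearly independent. The degree hypothesis $\deg Q\le \frac{2^{m/4}-1}{4e}$ is stronger than the bound $\frac{2^{m/4}-1}{3e}$ in Proposition~\ref{mainprop}. Hence, whenever $ad+bc\neq 0$, $f$ admits neither a bent nor a semibent decomposition on $\langle u,v\rangle^\perp$. So it remains to treat $ad+bc=0$, that is, $(c,d)=\lambda(a,b)$ with $\lambda\in\mathbb{F}_{2^m}\setminus\mathbb{F}_2$.

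By Lemma~\ref{lem:decom}, we must show that $D_uD_vf^\ast$ is nonconstant. As in Corollary~\ref{cor:x}, we compose $f^\ast$ with the linear permutation $L(x,y)=(x^{2^\ell},y^{2^\ell})$. By Lemma~\ref{lem:second}, it then suffices to study $\widetilde f(x,y)=\Tr_1^m(Q(yx^{-e}))$ along the image directions, which again have the form $(a',b')$ and $\lambda'(a',b')$. Writing this out,
\[
D_uD_v\widetilde f(x,y)=\Tr_1^m\Big(Q\big((y+(1+\lambda)b)(x+(1+\lambda)a)^{-e}\big)+Q\big((y+b)(x+a)^{-e}\big)+Q\big((y+\lambda b)(x+\lambda a)^{-e}\big)+Q\big(yx^{-e}\big)\Big).
\]
We split into three cases.

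\textbf{Case $a=0$.} Restrict to $x=0$; this is allowed because, with the convention $0^{-e}=0$, every argument of $Q$ vanishes. Then evaluate at $y=0$ on the line $x=0$, and also along $x$ with $y$ scaled. The cleanest route is to substitute $y\mapsto b y$, $x\mapsto x^{\prime}$ and specialise $y=0$. This gives
\[
\Tr_1^m\big(Q((1+\lambda)bx^{-e})+Q(bx^{-e})+Q(\lambda bx^{-e})\big).
\]
After replacing $x$ by $\beta x$ with $\beta^{e}=b$ (possible since $\gcd(e,2^m-1)=1$), this is exactly $g_\lambda(x)$ in \eqref{eq:Qgen}. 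It is nonconstant by hypothesis, so $D_uD_v\widetilde f$ is nonconstant.

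\textbf{Case $a\neq0$.} Scale $x\mapsto ax$ and set $y=0$ (if $b\neq0$) or $y=1$ after scaling by $a^e$ (if $b=0$). This produces a one-variable function of the form
\[
\Tr_1^m\Big(\sum_{i=1}^{4} Q\big(\beta_i (x+\upsilon_i)^{-e}\big)\Big)
\quad\text{or}\quad
\Tr_1^m\Big(\sum_{i=1}^{3} Q\big(\beta_i (x+\upsilon_i)^{-e}\big)\Big),
\]
with poles $\upsilon_i\in\{0,1,\lambda,1+\lambda\}$ pairwise distinct because $\lambda\notin\mathbb{F}_2$. In the $b=0$ case all four poles appear with coefficient $1$. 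Next write $\beta_i=\tilde\beta_i^{\,e}$ and set $P(X)=Q(X^e)$, so that each term becomes $P\big(\tilde\beta_i/(X+\upsilon_i)\big)$. Then apply Lemma~\ref{lem:curveDuv} with $\ell\le4$ and $t=e\deg Q$. The hypotheses of that lemma hold: $\alpha_i\upsilon_i+\beta_i\kappa_i=\tilde\beta_i\neq0$ for terms actually present, and there exists a pole distinct from all others. Since $\ell t\le 4e\deg Q\le 2^{m/4}-1$, this factor of $4$ is precisely what the degree bound accommodates. The lemma then yields affine points for both trace values of $\vartheta$, so the function is nonconstant. One must also check that $t=e\deg Q$ is odd: $\deg Q$ is odd by assumption and $e$ is odd since $\gcd(e,2^m-1)=1$ forces nothing about parity. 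Here I would instead reduce exponents and use $\deg Q$ odd together with choosing $e$ odd (as $e\equiv 2^\ell$ may be adjusted modulo $2^m-1$ to an odd representative).

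\textbf{Main obstacle.} The delicate step is the subcase $ab\neq0$ in which some coefficient vanishes. For example, if $b\neq0$ the coefficient attached to the pole at $0$ may cancel, or a pole may carry coefficient $0$ because of $\lambda$. In that situation I would instead specialise $y$ to a value that keeps at least one pole isolated with a nonzero coefficient, and argue via the strict triangle inequality that this pole gives valuation $-t$. If that fails, the resulting function reduces exactly to $g_\lambda$ after translation, and the hypothesis on $g_\lambda$ covers it.
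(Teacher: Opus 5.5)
Your argument follows the paper's sketch essentially step for step, and the case analysis is sound. Proposition~\ref{mainprop} handles $ad+bc\neq0$. Composing $f^\ast$ with $(x,y)\mapsto(x^{2^\ell},y^{2^\ell})$ reduces to $\widetilde f(x,y)=\Tr_1^m(Q(yx^{-e}))$ with directions $(a,b)$ and $\lambda(a,b)$. The case $a=0$ (with $y=0$, $x\mapsto\beta x$) yields exactly $g_\lambda$. The cases $b=0$ ($y=1$, four poles $0,1,\lambda,1+\lambda$) and $ab\neq0$ ($y=0$, three poles) are settled by Lemma~\ref{lem:curveDuv} with $\ell\le4$, which is where the factor $4e$ comes from.

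The one step that fails as written is your repair of the parity of $t=e\deg Q$. You are right that $e$ can be even: $e=2^\ell$ with $\ell\ge1$ satisfies all hypotheses. But the degree bound forces $4e\le 2^{m/4}-1$, so $e$ is already the least positive residue modulo $2^m-1$. Any odd representative $e'$ therefore satisfies $e'\ge e+2^m-1$. Then $\ell t=\ell e'\deg Q>2^m$, and the hypothesis $\ell t\le 2^{m/4}-1$ of Lemma~\ref{lem:curveDuv} fails; indeed its Hasse--Weil estimate becomes vacuous.

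The correct repair uses $\Tr_1^m(W^2)=\Tr_1^m(W)$. Write $e=2^se_0$ with $e_0$ odd, and let $\bar Q$ be obtained from $Q$ by raising each coefficient to the power $2^{-s}$. Then $Q(W^{2^s})=\bar Q(W)^{2^s}$, so term by term $\Tr_1^m\bigl(Q(\beta(x+\upsilon)^{-e})\bigr)=\Tr_1^m\bigl(\bar Q(\beta^{2^{-s}}(x+\upsilon)^{-e_0})\bigr)$. Now apply Lemma~\ref{lem:curveDuv} to $P(X)=\bar Q(X^{e_0})$, whose degree $t=e_0\deg Q\le e\deg Q$ is odd. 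Use $\gcd(e_0,2^m-1)=1$ to extract the needed $e_0$-th roots. The paper's sketch, like the proof of Proposition~\ref{mainprop}, passes over this point silently and implicitly treats $e$ as odd.

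The ``main obstacle'' you describe does not occur. For $ab\neq0$ and $y=0$, the term with pole at $0$ vanishes because $Q(0)=0$. The remaining coefficients are $\alpha(1+\lambda)$, $\alpha$, $\alpha\lambda$ with $\alpha=ba^{-e}\neq0$, and all are nonzero because $\lambda\notin\mathbb{F}_2$. The poles $1,\lambda,1+\lambda$ are pairwise distinct, so the hypotheses of Lemma~\ref{lem:curveDuv} hold directly. Your fallback, that a degenerate case ``reduces exactly to $g_\lambda$ after translation,'' is unjustified, but it is not needed. Likewise, the detour through $x=0$ in the case $a=0$ adds nothing. The substitution $y=0$, $x\mapsto\beta x$ already gives $g_{\lambda'}$ with $\lambda'=\lambda^{2^{m-\ell}}\notin\mathbb{F}_2$, which the hypothesis covers.
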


	\noindent{\it\textbf{Sketch of the proof.}}
	Similarly, by Proposition~\ref{mainprop}, it suffices to consider bent or semibent 
	decompositions on $\mathcal{S}=\langle u,v\rangle^{\perp}$,
	where $u=(a,b)$ and $v=(c,d)$ are linearly independent vectors in 
	$\mathbb{F}_{2^{m}} \times \mathbb{F}_{2^{m}}$ satisfying $ad+bc=0$.
	This condition implies that
	$(c,d) =\lambda (a,b)$ for some 
	$\lambda \in \mathbb{F}_{2^{m}}\setminus \mathbb{F}_{2}$.
	Moreover, by Lemma~\ref{lem:second}, it suffices to consider $\widetilde{f}(x,y)=\Tr_1^{m}(Q(y\,x^{-e}))$.
	
	\medskip
	
	\noindent
	\emph{Case $a=0$ (and hence $b\neq 0$).}
	Setting $y=0$ and applying the change of variables $x \mapsto \tilde{b}x$,
	where $\tilde{b}^{\,e}=b$, to $D_u D_v \widetilde{f}(x,y)$,
	we obtain the function
	$g_\lambda(x)$ in \eqref{eq:Qgen}. Hence, if $g_\lambda(x)$ is not constant, then
	$D_u D_v f^{\ast}(x,y)$ cannot be constant.
	
	\medskip
	
	\noindent
	\emph{Case $b=0$ (and hence $a\neq 0$).}
	Applying the changes of variables $x \mapsto ax$ and $y \mapsto a^{e}y$ to
	$D_u D_v \widetilde{f}(x,y)$ and then setting $y=1$, we obtain
	\[
	\Tr_1^{m}\!\Big(
	Q\!\left((x+1+\lambda)^{-e}\right)
	+ Q\!\left((x+1)^{-e}\right)
	+ Q\!\left((x+\lambda)^{-e}\right)
	+ Q\!\left(x^{-e}\right)
	\Big),
	\]
	which cannot be constant by Lemma~\ref{lem:curveDuv}. Consequently,
	$D_u D_v f^{\ast}(x,y)$ cannot be constant.
	
	\medskip
	
	\noindent
	\emph{Case $ab\neq 0$.}
	We apply the changes of variables $x \mapsto ax$ and $y \mapsto by$, and then set
	$\alpha = b a^{-e}$ and $y=0$. This yields
	\[
	\Tr_1^{m}\!\left(
	Q\!\left(\alpha (1+\lambda)(x+1+\lambda)^{-e}\right)
	+ Q\!\left(\alpha (x+1)^{-e}\right)
	+ Q\!\left(\alpha \lambda(x+\lambda)^{-e}\right)
	\right).
	\]
	Again, by Lemma~\ref{lem:curveDuv}, this expression cannot be constant, and hence
	$D_u D_v f^{\ast}(x,y)$ cannot be constant.
	
	\medskip
	
	The result now follows from Lemma~\ref{lem:decom}, which completes the proof.
	\hfill$\Box$
	
	\begin{example}
		Let $m$ be an odd integer and let $k$ be a positive integer with $\gcd(k,m)=1$. 
		Let $Q(x)=x^{2^k+1}$ be the Gold function. We recall that $Q$ is a permutation of $\mathbb{F}_{2^{m}}$ if and only if $m$ is odd and $\gcd(k,m)=1$. 
		
		For $\lambda \in \mathbb{F}_{2^{m}} \setminus \mathbb{F}_{2}$, the function $g_\lambda(x)$ defined in \eqref{eq:Qgen} satisfies
		\begin{align*}
			g_\lambda(x)
			&= \Tr_1^{m}\!\left(
			(1+\lambda)^{2^k+1} x^{-e(2^k+1)}
			+ x^{-e(2^k+1)}
			+ \lambda^{2^k+1} x^{-e(2^k+1)}
			\right) \\
			&= \Tr_1^{m}\!\left(
			\big((1+\lambda)^{2^k+1}+1+\lambda^{2^k+1}\big)
			x^{-e(2^k+1)}
			\right) \\
			&= \Tr_1^{m}\!\left(
			\lambda^{2^k}\, x^{-e(2^k+1)}
			\right).
		\end{align*}
		Since $\gcd(e(2^k+1),\,2^m-1)=1$, the function $g_\lambda$ is balanced for every nonzero $\lambda \in \mathbb{F}_{2^{m}}$. 
		
		Therefore, by Theorem~\ref{thm:gps}, the generalized $\mathcal{PS}_{ap}$ bent function 
		\[
		f \colon \mathbb{F}_{2^{m}} \times \mathbb{F}_{2^{m}} \to \mathbb{F}_2,
		\qquad 
		f(x,y)= \Tr_1^{m}\!\left(x^{2^k+1}\,y^{-\eta(2^k+1)}\right),
		\]
		where $\eta$ is the unique integer satisfying $\eta e \equiv 1 \mod{(2^{m}-1)}$, 
		does not admit any semibent or bent decomposition for all sufficiently small integers $k$ relative to $2^{m}$.
	\end{example}

	\section{Decomposition and concatenation of bent functions}
	\label{concat}
	
	Let $u, v$ be two linearly independent elements of $\V_n^{(2)}$, and let 
	$\mathcal{S} = \langle u, v \rangle^{\perp}$. 
	Since $\langle \cdot , \cdot \rangle_n$ is a nondegenerate inner product on 
	$\V_n^{(2)}$, we can write 
	$\V_n^{(2)} = \mathcal{S} \oplus \langle u, v \rangle$.

	Let $f \colon \V_n^{(2)} \to \mathbb{F}_2$ be a Boolean function, and
	let $(f_1,f_2,f_3,f_4)$ be its decomposition with respect to
	$\mathcal{S}$. After a suitable change of coordinates as described
	above, we may identify $\mathcal{S}=\V_{n-2}^{(2)}$ and $u=(1,0)$, $v=(0,1)$.
	Thus, we may regard $f$ as a function on
	$\V_{n-2}^{(2)} \times \mathbb{F}_2 \times \mathbb{F}_2$ obtained by
	concatenation of $f_1, f_2, f_3, f_4 \colon \V_{n-2}^{(2)} \to \mathbb{F}_2$.
	That is, in this representation, the decomposition $(f_1,f_2,f_3,f_4)$
	satisfies
	\[
	f(x,y,z)=
	\begin{cases}
	f_1(x), & \text{if } (y,z)=(0,0),\\
	f_2(x), & \text{if } (y,z)=(0,1),\\
	f_3(x), & \text{if } (y,z)=(1,0),\\
	f_4(x), & \text{if } (y,z)=(1,1), 
	\end{cases}
	\]
	where $x\in\V_{n-2}^{(2)}$ and $y,z\in\mathbb{F}_2$.
	Equivalently,
	\[
	f(x,y,z)
	= f_1(x)
	+ yz \bigl(f_1 + f_2 + f_3 + f_4\bigr)(x)
	+ y \bigl(f_1 + f_3\bigr)(x)
	+ z \bigl(f_1 + f_2\bigr)(x).
	\]
	In other words, $f$ is obtained by concatenation of the Boolean functions
	$f_1, f_2, f_3, f_4 $.
	
	We define the $4$-concatenation (or simply, concatenation) of arbitrary Boolean functions
	$f_1, f_2, f_3, f_4 \colon \V_n^{(2)} \to \mathbb{F}_2$ as the Boolean
	function $f = f_1 \,\|\, f_2 \,\|\, f_3 \,\|\, f_4$ from
	$\V_n^{(2)} \times \mathbb{F}_2^2$ to $\mathbb{F}_2$ given by
	\[
	f(x,y,z)
	= f_1(x)
	+ yz \bigl(f_1 + f_2 + f_3 + f_4\bigr)(x)
	+ y \bigl(f_1 + f_3\bigr)(x)
	+ z \bigl(f_1 + f_2\bigr)(x),
	\]
	where $x \in \V_n^{(2)}$ and $y,z \in \mathbb{F}_2$. 
	
	We remark that the concatenation method has been used efficiently for
	the secondary construction of Boolean bent functions outside the \(MM^{\#}\) class. For instance, see
	\cite{Kudin25,PaPoKuZ,PasalicEtAl2023} as well as the survey paper
	\cite{Pasalic26} and the references therein.
	The necessary and sufficient condition for the concatenation of bent
	functions to be bent is given as follows.

	\begin{lemma}[{\cite[Theorem~III.1]{hpz}}]
		\label{ffff=1}
		Let $f_1,f_2,f_3,f_4$ be four Boolean bent functions. Then the concatenation $f = f_1 \,\|\, f_2 \,\|\, f_3 \,\|\, f_4$
		is bent if and only if $f_1^* + f_2^* + f_3^* + f_4^* = 1$.
	\end{lemma}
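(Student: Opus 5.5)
The natural route is to compute the Walsh transform of $f = f_1\,\|\,f_2\,\|\,f_3\,\|\,f_4$ on $\V_n^{(2)}\times\mathbb{F}_2^2$ directly in terms of the Walsh transforms of the $f_i$, and then use bentness of the $f_i$ to reduce everything to a sign condition on the duals.

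First fix the bookkeeping. Write $f(x,y,z)=f_{(y,z)}(x)$ with $f_{(0,0)}=f_1$, $f_{(0,1)}=f_2$, $f_{(1,0)}=f_3$, $f_{(1,1)}=f_4$. For $(b,\beta,\gamma)\in\V_n^{(2)}\times\mathbb{F}_2^2$, splitting the sum over $(y,z)$ gives
\[
W_f(b,\beta,\gamma)=\sum_{(y,z)\in\mathbb{F}_2^2}(-1)^{\beta y+\gamma z}\,W_{f_{(y,z)}}(b)
=2^{n/2}\sum_{(y,z)\in\mathbb{F}_2^2}(-1)^{\beta y+\gamma z+f_{(y,z)}^{\ast}(b)},
\]
where the second equality uses $W_{f_i}(b)=2^{n/2}(-1)^{f_i^\ast(b)}$. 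Bentness of $f$ on $n+2$ variables means $|W_f|=2^{(n+2)/2}=2\cdot 2^{n/2}$ everywhere. So the requirement is that, for each fixed $b$, the signed sum $S(\beta,\gamma)=\sum_{y,z}(-1)^{\beta y+\gamma z}\varepsilon_{y,z}$, with $\varepsilon_{y,z}=(-1)^{f^\ast_{(y,z)}(b)}\in\{\pm1\}$, has absolute value $2$ for all four $(\beta,\gamma)$.

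Next comes the combinatorial step. $S$ is the $2$-dimensional Walsh–Hadamard transform of the sign vector $\varepsilon$ on $\mathbb{F}_2^2$, i.e.\ $S$ is the Walsh transform of the Boolean function $(y,z)\mapsto f^\ast_{(y,z)}(b)$ on $\mathbb{F}_2^2$. The condition $|S|\equiv 2$ says exactly that this two-variable function is bent. The two-variable bent functions are precisely the functions of odd weight, i.e.\ those with an odd number of $1$'s among the four values. Equivalently, $\prod_{y,z}\varepsilon_{y,z}=-1$, which means $\sum_{y,z} f^\ast_{(y,z)}(b)=1$. One can check the odd-weight claim by hand: weight $0$ or $4$ gives $S\in\{\pm4,0\}$, weight $2$ gives values in $\{0,\pm4\}$, and weight $1$ or $3$ gives $\pm2$ in every coordinate. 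Alternatively, apply Parseval ($\sum S^2=16$) together with the fact that $S\equiv\sum\varepsilon \pmod 4$ structure.

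Combining the two steps, $f$ is bent if and only if $f_1^\ast(b)+f_2^\ast(b)+f_3^\ast(b)+f_4^\ast(b)=1$ for every $b$, which is the claim. The only point needing care is the index bookkeeping: the correspondence between $(y,z)$ and $f_i$ does not matter, because the condition is symmetric in the four duals. The case analysis of the $2$-variable signed sums is the main (though elementary) obstacle, and I would present it as the statement that a Boolean function on $\mathbb{F}_2^2$ is bent if and only if it has odd weight.
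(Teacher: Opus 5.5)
Your proof is correct and complete. The paper quotes this lemma from the literature without proof, and your argument is the standard direct Walsh-transform computation behind such results: write $W_f(b,\beta,\gamma)$ as the signed sum of the four values $2^{n/2}(-1)^{f_i^{\ast}(b)}$. This reduces bentness of $f$ to bentness of $(y,z)\mapsto f^{\ast}_{(y,z)}(b)$ on $\mathbb{F}_2^2$ for every $b$, which is exactly the condition that this function has odd weight.
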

	
	We recall that a $GMM$ function $f \colon \mathbb{F}_{2^n} \times \mathbb{F}_{2^k}^2 \to \mathbb{F}_2$
	is defined by $f(x,y,z) = f^{(z)}(x) + \Tr_1^k(yz)$,
	where, for each $z \in \mathbb{F}_{2^k}$, the function
	$f^{(z)} \colon \mathbb{F}_{2^n} \to \mathbb{F}_2$ is bent.
	In the special case $k=1$, that is, for
	$f \colon \mathbb{F}_{2^n} \times \mathbb{F}_2 \times  \mathbb{F}_2 \to \mathbb{F}_2$
	given by $f(x,y,z)=f^{(z)}(x)+yz$, we observe that
	$f(x,y,z)=f^{(0)}(x)$ whenever $z=0$, whereas for $z=1$ we have
	$f(x,y,z)=f^{(1)}(x)$ if $y=0$ and
	$f(x,y,z)=f^{(1)}(x)+1$ if $y=1$.
	Consequently, as noted in \cite{akkmpp26}, $f$ can be expressed as the
	concatenation $f = f^{(0)} \,\|\, f^{(1)} \,\|\, f^{(0)} \,\|\, \bigl(f^{(1)}+1\bigr)$. 
	In this case, the necessary and sufficient condition on the dual
	functions in Lemma~\ref{ffff=1} is trivially satisfied.
	More generally, as in the case $k=1$, $GMM$ functions may be viewed as
	concatenations of bent functions over $\mathbb{F}_{2^n}$.
	
	In general, concatenations of bent functions (particularly the ones of the form $f = f^{(0)} \,\|\, f^{(1)} \,\|\, f^{(0)} \,\|\, \bigl(f^{(1)}+1\bigr)$) have been studied for constructing bent functions outside the
	completed $MM$ class. In the recent paper \cite{akkmpp26}, the
	concatenation of vectorial $\mathcal{PS}_{ap}$ bent functions is
	investigated. The motivation stems from the fact that the components of
	a vectorial $\mathcal{PS}_{ap}$ bent function $F \colon \V_n^{(2)} \to \V_m^{(2)}$
	satisfy the following property: for any nonzero
	$\alpha, \beta \in \V_m^{(2)}$ with $\alpha \ne \beta$, we have
	\begin{align}\label{eq:comp}
		(F_\alpha)^{*} + (F_\beta)^{*} = (F_{\alpha+\beta})^{*}.  
	\end{align}
	
	The property in \eqref{eq:comp} holds for almost all bent functions arising from bent partitions; in particular, it applies to vectorial generalized $\mathcal{PS}_{ap}$ bent functions $F \colon \V_n^{(2)} \to \V_m^{(2)}$. Owing to their explicit representation, analogous to that of $\mathcal{PS}_{ap}$ bent functions, one can construct bent functions by concatenating the components of $F$ as $f = F_\alpha \,\|\, F_\beta \,\|\, F_\gamma \,\|\, F_{\alpha+\beta+\gamma} + 1$, where $\alpha, \beta, \gamma \in \V_m^{(2)}$ are nonzero elements such that $\alpha + \beta + \gamma \ne 0$, as stated below. The proof proceeds by analogous arguments and is therefore omitted.
	
	\begin{theorem}
		Let $m$, $k$ be integers such that $k$ divides $m$ and $\gcd(2^m-1, 2^k+1) = 1$. Set
		$e =2^k+1$.
		Let $P$ be any permutation of $\F_{2^k}$, 
		and let $\alpha,\beta,\gamma$ be any nonzero elements of $\F_{2^k}$ such that $\alpha + \beta + \gamma \ne 0$. Then 
		\begin{equation}
			\label{PSffff}
			f(x,y,z_1,z_2)
			= \Tr_1^k \!\left(
			\bigl((1+z_1+z_2)\alpha + z_2\beta + z_1\gamma\bigr)
			\, P\!\left(\Tr_k^m\!\left(yx^{-e}\right)\right)
			\right)
			+ z_1 z_2.
		\end{equation}
		is a bent function from $\F_{2^m}\times\F_{2^m}\times\F_2\times\F_2$ to $\F_2$.
	\end{theorem}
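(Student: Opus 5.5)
The plan is to read \eqref{PSffff} as a $4$-concatenation of components of a vectorial generalized $\mathcal{PS}_{ap}$ bent function and then apply Lemma~\ref{ffff=1}.

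\textbf{Step 1 (identify the four pieces).} Put
\[
F(x,y)=P\!\left(\Tr_k^m\!\left(yx^{-e}\right)\right)\colon \F_{2^m}\times\F_{2^m}\to\F_{2^k},
\]
and for $\omega\in\F_{2^k}^{\ast}$ let $F_\omega=\Tr_1^k(\omega F)$ be its component. Evaluating \eqref{PSffff} at the four values of $(z_1,z_2)$ gives
\begin{align*}
(0,0)&:\ F_\alpha, & (0,1)&:\ F_{\alpha+\alpha+\beta}=F_\beta, \\
(1,0)&:\ F_\gamma, & (1,1)&:\ F_{\alpha+\beta+\gamma}+1 .
\end{align*}
Thus $f=F_\alpha\,\|\,F_\beta\,\|\,F_\gamma\,\|\,(F_{\alpha+\beta+\gamma}+1)$, with the identification of $(z_1,z_2)$ with $(y,z)$ in the concatenation formula, up to the order of the middle two pieces, which is harmless.

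\textbf{Step 2 (bentness of the pieces).} Take $e=2^k+1$. Since $k\mid m$, we have $e\equiv 2 \pmod{2^k-1}$, so $\ell=1$; together with $\gcd(e,2^m-1)=1$ this means the hypotheses of the generalized Desarguesian spread $\Omega_1$ hold. Because $P$ is a permutation of $\F_{2^k}$, the map $\Tr_1^k(\omega P(\cdot))$ is balanced for every $\omega\neq0$. Hence each $F_\omega$ is a generalized $\mathcal{PS}_{ap}$ bent function from $\Omega_1$ with $c_0=0$. This applies to $\alpha$, $\beta$, $\gamma$ and $\alpha+\beta+\gamma$, all of which are nonzero. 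Complementing a bent function preserves bentness, so all four pieces are bent.

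\textbf{Step 3 (dual condition).} By Lemma~\ref{ffff=1}, it suffices to verify
\[
F_\alpha^*+F_\beta^*+F_\gamma^*+(F_{\alpha+\beta+\gamma}+1)^*=1 .
\]
Since $(g+1)^*=g^*+1$, this reduces to showing that $\omega\mapsto F_\omega^*$ is additive on nonzero $\omega$, which is \eqref{eq:comp}. To establish it, I would compute the dual explicitly, in the same way as in the lemma computing $g^*$ in Section~\ref{sec:decom}, but using the $U$-side partition $\Omega_1$. The character sums $\chi_{u,v}(A(\gamma'))$ are nonzero-large on exactly one block, namely the $\gamma'$ determined by $\Tr_k^m$ of a monomial in $(u,v)$ (via \cite[Proposition~10]{akm24} with the roles of the coordinates swapped). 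On $U$ the sum is $2^m$ or $0$. Balancedness of $\Tr_1^k(\omega P)$ then gives
\[
W_{F_\omega}(u,v)=2^m(-1)^{\Tr_1^k(\omega P(\gamma'(u,v)))}
\]
when $\gamma'(u,v)$ is defined, and $W_{F_\omega}=2^m(-1)^{\Tr_1^k(\omega P(0))}$ on the exceptional set. So $F_\omega^*(u,v)=\Tr_1^k\bigl(\omega\,P(\gamma'(u,v))\bigr)$ with $\gamma'$ independent of $\omega$, and this expression is linear in $\omega$.

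\textbf{Main obstacle.} The key point is that $\gamma'(u,v)$ does not depend on $\omega$, including on the exceptional subspace, where the value must come out as $P(0)$ consistently for every $\omega$. I also need to confirm the correct Frobenius twist $2^{m-\ell}$ with $\ell=1$. Once these are settled, the four duals sum to $0+1=1$, and $f$ is bent.
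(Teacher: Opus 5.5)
Your proposal is correct and is essentially the paper's (omitted) argument: write $f=F_\alpha\,\|\,F_\beta\,\|\,F_\gamma\,\|\,(F_{\alpha+\beta+\gamma}+1)$ for the vectorial generalized $\mathcal{PS}_{ap}$ function $F=P(\Tr_k^m(yx^{-e}))$, use the additivity \eqref{eq:comp} of the component duals, and conclude with Lemma~\ref{ffff=1}. The step you flag as the main obstacle does go through: a direct character-sum computation on $\Omega_1$ gives $F_\omega^*(u,v)=\Tr_1^k\bigl(\omega\,P\bigl(\Tr_k^m(uv^{-\eta})^{2^\ell}\bigr)\bigr)$ for $v\neq 0$ and $\Tr_1^k(\omega P(0))$ for $v=0$, so the selected block never depends on $\omega$ and the exact Frobenius twist does not matter.
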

	
	We conclude this section with a more general construction of bent
	functions arising from the structure of the preimage distribution of
	$f$ given in \eqref{PSffff}. Without loss of generality, we may assume
	that $P$ is a permutation of $\F_{2^m}$ satisfying $P(0)=0$. Under this
	assumption, the preimage distribution of $f$ in \eqref{PSffff} is given below.
	
	Note that the collection of subsets
	\[
	\bigl\{
	U \times \{(z_1,z_2)\}
	\bigr\}
	\;\cup\;
	\bigl\{
	\mathcal A(\gamma) \times \{(z_1,z_2)\}
	:\, \gamma \in \F_{2^k}
	\bigr\}
	\]
	forms a disjoint cover of $\F_{2^m} \times \F_{2^m} \times \F_2 \times \F_2$,
	where $U$ and $\mathcal A(\gamma)$ are defined in \eqref{eq:U} and \eqref{eq:Agamma}, respectively.
	The subsets $U\times\{(0,0)\}$, $U\times\{(0,1)\}$, and 
	$U\times\{(1,0)\}$ are mapped to $0$, whereas 
	$U\times\{(1,1)\}$ is mapped to $1$. 
	Moreover, exactly half of the sets 
	$\{\mathcal{A}(\gamma)\times\{(y,z)\} \,:\, \gamma \in \F_{2^k}\}$ 
	are mapped to $0$; consequently, the remaining $2^{k-1}$ are mapped to $1$ whenever $(y,z)\in\{(0,0),(1,0),(0,1)\}$. 
	Finally, exactly half of the sets 
	$\{\mathcal{A}(\gamma)\times\{(1,1)\} \,:\, \gamma \in \F_{2^k}\}$ 
	are mapped to $0$ in such a way that, for each $\gamma \in \F_{2^k}$, if 
	$\mathcal{A}(\gamma)\times\{(0,0)\}$, 
	$\mathcal{A}(\gamma)\times\{(0,1)\}$, and 
	$\mathcal{A}(\gamma)\times\{(1,0)\}$ 
	are mapped to $a_0$, $a_1$, and $a_2$, respectively, then 
	$\mathcal{A}(\gamma)\times\{(1,1)\}$ is mapped to 
	$a_0 + a_1 + a_2 + 1$.
	
	Motivated by the preimage distribution of $f$ in \eqref{PSffff}, we present the following construction of bent functions, analogous to that obtained from the Desarguesian spread in \cite{akkmpp26}. Although the arguments are more involved, they are similar to those in \cite{akkmpp26}; hence, we omit the proof.
	
	\begin{theorem}
		\label{thmparti}
		Let $\Omega_1=\{U,\mathcal{A}(\gamma)\,:\,\gamma\in\F_{2^k}\}$ 
		be the generalized Desarguesian spread of 
		$\F_{2^m}\times\F_{2^m}$ given in \eqref{eq:Gamma}. 
		This induces a partition of 
		$\F_{2^m}\times\F_{2^m}\times\F_2\times\F_2$ 
		into the sets $U\times\{(y,z)\}$ and 
		$\mathcal{A}(\gamma)\times\{(y,z)\}$, 
		where $\gamma\in\F_{2^k}$ and $(y,z)\in\F_2^2$. 
		
		Assign to each quadruple $(a_0,a_1,a_2,a_3)\in\F_2^4$ 
		with an odd number of $1$'s exactly $2^{k-3}$ sets 
		$\mathcal{A}(\gamma)$. Define 
		$f:\F_{2^m}\times\F_{2^m}\times\F_2\times\F_2\to\F_2$ 
		to be constant on each part of this partition as follows:\\
		$f=0$ on $U\times\{(y,z)\}$ for $(y,z)\in\{(0,0),(0,1),(1,0)\}$, 
		and $f=1$ on $U\times\{(1,1)\}$. 
		If $\mathcal{A}(\gamma)$ is assigned to $(a_0,a_1,a_2,a_3)$, 
		then $f(x,y,z_1,z_2)=a_i$ whenever 
		$(x,y)\in\mathcal{A}(\gamma)$ and 
		$(z_1,z_2)=(0,0),(0,1),(1,0),(1,1)$ 
		for $i=0,1,2,3$, respectively. 
		Then $f$ is a bent function.
	\end{theorem}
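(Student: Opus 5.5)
We prove bentness of the function $f$ in Theorem~\ref{thmparti} by computing its Walsh transform directly from the partition. Throughout we assume $k\ge 3$, so that $2^{k-3}$ is an integer.

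Write $f(x,y,z_1,z_2)=f_{(z_1,z_2)}(x,y)$. Each restriction $f_{(z_1,z_2)}$ is constant on $U$ and on every $\mathcal{A}(\gamma)$.

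\textbf{Balance on $\Omega_1$.} Fix $(z_1,z_2)$. There are exactly eight quadruples in $\F_2^4$ with an odd number of $1$'s. For each coordinate $i$, exactly four of them have $a_i=0$. Hence exactly $4\cdot 2^{k-3}=2^{k-1}$ of the sets $\mathcal{A}(\gamma)$ are mapped to $0$ by $f_{(z_1,z_2)}$. So each $f_{(z_1,z_2)}$ satisfies conditions (i)--(ii) of a normal bent partition, and is therefore bent.

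\textbf{Duals of the restrictions.} Let $\gamma(u,v)$ denote the unique $\gamma$ with $\gamma^{2^\ell}=\Tr_k^m(u v^{-\eta})$, the analogue for $\Omega_1$ of the formula in the duality lemma above. Using the character values $\chi_{u,v}(\mathcal{A}(\gamma))$ and $\chi_{u,v}(U)$ (the $\Omega_1$-analogue of \cite[Proposition~10]{akm24}), and repeating the Walsh computation from that lemma, we get the following. For $v\ne0$,
\[
f_{(z_1,z_2)}^*(u,v)=f_{(z_1,z_2)}\bigl(\mathcal{A}(\gamma(u,v))\bigr),
\]
while for $v=0$ the dual equals the value of $f_{(z_1,z_2)}$ on $U$.

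\textbf{Applying Lemma~\ref{ffff=1}.} The function $f$ is exactly the concatenation
\[
f_{(0,0)}\,\|\,f_{(0,1)}\,\|\,f_{(1,0)}\,\|\,f_{(1,1)} .
\]
The ordering is harmless: the condition in Lemma~\ref{ffff=1} is a symmetric sum, so permuting the four functions does not change it. We check the dual condition pointwise.
\begin{itemize}
\item At $(u,0)$ the sum of the four duals is $0+0+0+1=1$.
\item At $(u,v)$ with $v\neq0$, suppose $\mathcal{A}(\gamma(u,v))$ is assigned the quadruple $(a_0,a_1,a_2,a_3)$. The sum of the four duals is $a_0+a_1+a_2+a_3$, which equals $1$ because the quadruple has odd weight.
\end{itemize}
So $\sum f_{(z_1,z_2)}^*=1$ identically, and Lemma~\ref{ffff=1} gives that $f$ is bent.

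\textbf{Main obstacle.} The delicate step is the dual computation: carrying over \cite[Proposition~10]{akm24} to $\Omega_1$ with the correct exponent, and correctly identifying which block the dual selects. Only the fact that the four duals select the same block $\gamma(u,v)$ matters, and this holds because the selected block depends only on $(u,v)$ and not on $(z_1,z_2)$. As a cross-check, one can instead compute $W_f$ directly as
\[
W_f=\sum_{z}(-1)^{\langle\cdot,z\rangle}\,W_{f_z},
\]
and verify that its absolute value is $2^{m+1}$ everywhere.
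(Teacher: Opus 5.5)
Your proof is correct and follows the route the paper points to. The paper omits the proof, deferring to \cite{akkmpp26}, but it sets up exactly Lemma~\ref{ffff=1} and the bent-partition dual computation for this purpose: each restriction $f_{(z_1,z_2)}$ is balanced on $\Omega_1$, and its dual at $(u,v)$ is its value on $U$ if $v=0$ and on a single block $\mathcal{A}(\gamma(u,v))$ otherwise, so the four duals sum to $0+0+0+1$ or $a_0+a_1+a_2+a_3$, both equal to $1$.

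One small slip: the direct character sum gives $\chi_{u,v}(\mathcal{A}(\gamma))=2^m-2^{m-k}$ exactly when $v\neq 0$ and $\gamma=\Tr_k^m(uv^{-\eta})^{2^\ell}$, not when $\gamma^{2^\ell}=\Tr_k^m(uv^{-\eta})$. As you note, this is harmless, since the argument only uses that the selected block depends on $(u,v)$ alone.
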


	\section*{Acknowledgements}
	
	The initial work on this project began during the “Women in Numbers Europe~5 (WINE-5)” workshop, held at the University of Split in August 2025. The authors are grateful to the University of Split and the supporting institutions for making this conference and the resulting collaboration possible. They would especially like to thank the organizers of WINE-5 —Marcela Hanzer, Borka Jadrijević, Pınar Kılıçer, and Lejla Smajlović—for their dedication and hard work, which made the meeting both exceptionally fruitful and enjoyable.
	
	This study was supported by the Scientific and Technological Research  Council of Turkey (TÜBİTAK) under Grant Number 125F396. S.~A. and N.~A. thank TÜBİTAK for its support.
	T.~K. is supported by the FWF Project~P~35138.
	B.~T. is supported by the Swiss National Foundation through grant no.~212865.

\end{document}